\documentclass[leqno,11pt]{amsart}
\usepackage{amssymb}
\usepackage{amsfonts}
\usepackage{graphicx}
\usepackage{amscd}
\usepackage{amsmath}
\usepackage{amssymb}
\usepackage{graphicx,pstricks,pst-plot}
\usepackage[utf8]{inputenc}

\theoremstyle{plain}
\newtheorem{theorem}{\indent\sc Theorem}[section]
\newtheorem{lemma}[theorem]{\indent\sc Lemma}
\newtheorem{corollary}[theorem]{\indent\sc Corollary}
\newtheorem{proposition}[theorem]{\indent\sc Proposition}

\theoremstyle{definition}

\newtheorem{remark}[theorem]{\indent\sc Remark}
\newtheorem{example}[theorem]{\indent\sc Example}

\usepackage{hyperref, color}

\hypersetup{
    colorlinks=true,       
    linkcolor=blue,        
    citecolor=blue,        
    urlcolor=blue          
}

\usepackage[
  lmargin=2.5cm,
  rmargin=2.5cm,
  tmargin=3.0cm,
  bmargin=3.0cm
]{geometry}

\makeatletter
\@namedef{subjclassname}{%
  \textup{2024} Mathematics Subject Classification}
\makeatother

\numberwithin{equation}{section}

\title[On the Geometry of Complete Spacelike LW-Submanifolds in $\mathbb{L}_{q}^{n+p}$]{On the Geometry of Complete Spacelike LW-Submanifolds in Locally Symmetric Semi-Riemannian Spaces}

\author[Ara\'{u}jo, J.G.S. And Barboza, W.F.C]{Jogli G. S. Ara\'{u}jo$^1$, Weiller F. C. Barboza$^{*,2}$}

\address{$^1$ Jogli G. da Silva Araújo \endgraf Departamento de Matem\'atica \endgraf
Universidade Federal Rural de Pernambuco \endgraf
52.171-900 Recife, Pernambuco, Brazil}

\email{jogli.silva@ufrpe.br \endgraf}

\address{$^2$ Weiller F. Chaves Barboza \endgraf Unidade Acadêmica de Matem\'atica \endgraf
Universidade Federal de Campina Grande \endgraf
58.429-970 Campina Grande, Para\'{\i}ba, Brazil}

\email{weiller@mat.ufcg.edu.br \endgraf}

\keywords{Locally symmetric semi-Riemannian space; spacelike LW-submanifolds; parallel normalized mean curvature vector field; conformally flat manifolds.}

\subjclass[2024]{Primary 53C42; Secondary 53A10, 53C20, 53C50}

\thanks{$^\ast$ Corresponding author.}

\begin{document}

\begin{abstract}
Let $M^{n}$ be an $n$-dimensional complete spacelike linear Weingarten submanifold immersed in a locally symmetric semi-Riemannian space $\mathbb{L}_{q}^{n+p}$ of index $q$, with parallel normalized mean curvature vector field and flat normal bundle. Assuming that $M^{n}$ satisfies suitable curvature constraints, we investigate rigidity results for such submanifolds. By combining a Simons-type formula for spacelike submanifolds with analytic techniques involving the Cheng--Yau modified operator $\mathcal{L}$, we establish sharp inequalities relating the traceless second fundamental form and the gradient of the mean curvature. As applications, we obtain several characterization results showing that $M^{n}$ must be either totally umbilical or isoparametric. More precisely, we derive rigidity results under three distinct frameworks: via the Omori--Yau maximum principle, via the $\mathcal{L}$-parabolicity of the underlying manifold, and under an integrability condition on the gradient of the mean curvature. These results generalize and unify known classification theorems for spacelike submanifolds satisfying linear Weingarten relations in semi-Riemannian ambient spaces.
\end{abstract}

\maketitle

\tableofcontents

\section{Introduction}

The study of spacelike submanifolds immersed in a semi-Riemannian space constitutes an important thematic from both physical and mathematical points of view. For instance, it was pointed out by J. Marsden and F. Tipler in \cite{Marsden:78} and S. Stumbles in \cite{Stumbles:80} that spacelike hypersurfaces with constant mean curvature in an arbitrary Lorentzian space (which is a semi-Riemannian space of index $p = 1$) play an important role in general relativity, in that they serve as convenient initial data for the Cauchy problem for Einstein equations. Furthermore, submanifold theory provides the adequate tools to approach some important problems involving spacetime singularities and gravitational collapse. The singularity theorems proved in the 1960s by R. Penrose in \cite{Penrose:1965} and S.W. Hawking and G. F. R. Ellis in \cite{Hawking-Ellis:1973} state that the formation of singularities is unavoidable, if one assumes reasonable conditions on the curvature of the spacetime, on the extrinsic geometry of certain submanifolds, and on the causal structure of the Lorentzian manifold. The existence of spacelike submanifolds in the spacetime, in particular, is a key requirement in the original formulation of the singularity theorems as well as in their more recent generalizations. For more details, see G. J. Galloway and J. M. M. Senovilla in \cite{Galloway-Senovilla:2010}, Z. Liang and X. Zhang in \cite{Liang-Zhang:2012}, and J. M. M. Senovilla in \cite{Senovilla:2012}.

From the mathematical point of view, the interest in the study of the geometry of these submanifolds is mostly due to the fact that they exhibit nice Bernstein$-$type properties, and one can truly say that the first remarkable results in this branch were the rigidity theorems of E. Calabi in \cite{Calabi: 70} and S. Y. Cheng and S. T. Yau in \cite{Cheng: 76}, who showed (the former for $n \leq 4$, and the latter for general $n$) that the only complete maximal spacelike hypersurfaces of the Lorentz Minkowski space $L^{n+1}$ are the spacelike hyperplanes. However, in the case that the mean curvature is a positive constant, A. E. Treibergs in \cite{Treibergs:82} astonishingly showed that there are many entire solutions of the corresponding constant mean curvature equation in $L^{n+1}$, which he was able to classify by their projective boundary values at infinity. When the ambient space is the de Sitter space $\mathbb S_{1}^{n+1}$, Cheng~\cite{Cheng: 90}, studying the case $b=0$, proved that if $M^{n}$ is a complete linear Weingarten spacelike hypersurface with nonnegative sectional curvature such that $H$ attains its maximum, then $M^{n}$ must be totally umbilical. on the other hand, S. Nishikawa in \cite{Nishikawa: 84} extended the results of \cite{Calabi: 70} and \cite{Cheng: 76} for a wide class of semi-Riemannian spaces, the so-called \emph{locally symmetric semi-Riemannian spaces}. At this point, it is worth recalling that a fundamental property of curvature is its control over the relative behavior of nearby geodesics. Because a normal neighborhood $\mathcal{U}$ is filled with radial geodesics, curvature thereby gives a description of the geometry of $\mathcal{U}$. Considering only the locally symmetric case, we have that this description is so accurate that, if $\mathcal{U}$ and $\tilde{\mathcal{U}}$ are normal neighborhoods with the same description (and same dimension and index), then $\mathcal{U}$ and $\tilde{\mathcal{U}}$ must be isometric. For more details concerning locally symmetric spaces, see Chapter 8 in \cite{O'Neill:83}. Returning to our context, we note that the seminal paper \cite{Nishikawa: 84} induced the appearance of several works approaching the problem of characterizing complete spacelike hypersurfaces immersed in a locally symmetric Lorentzian space, see, for instance, \cite{Baek: 04}, \cite{Lima: 13}, \cite{Lima: 13B}, \cite{Lima: 16B}, \cite{deLima:2017}, \cite{Liu: 10}. 

For higher codimension, considering again the particular case $b=0$, Liu~\cite{Liu:01} showed that the totally umbilical round spheres are the only $n$-dimensional compact linear Weingarten spacelike submanifolds of $\mathbb{L}_{p}^{n+p}$ with nonnegative sectional curvature and flat normal bundle. Generalizing the ideas of a previous work~\cite{Dan: 10}, Yang and Hou~\cite{Dan: 12} showed that a linear Weingarten spacelike submanifold in $\mathbb S_{p}^{n+p}$, with $a>0$, $b<1$, having parallel normalized mean curvature vector field (that is, the mean curvature function is positive and that the corresponding normalized mean curvature vector field is parallel as a section of the normal bundle) and such that the squared norm of its second fundamental form satisfies a suitable boundedness, must be either totally umbilical or isometric to a certain hyperbolic cylinder. On the other hand, still with higher codimensions, T. Ishihara in \cite{Ishihara: 88} applied a technique developed by S. S. Chern, M. P. do Carmo, and S. Kobayashi in \cite{Chern: 70} in order to extend the results of \cite{Calabi: 70} and \cite{Cheng: 76} to complete maximal spacelike submanifolds in a semi-Riemannian space form of constant nonnegative sectional curvature. More recently, De Lima et al.\ in~\cite{De Lima: 17} investigated complete maximal spacelike submanifolds immersed with flat normal bundle in a locally symmetric semi-Riemannian space obeying curvature conditions similar to those of S.~Nishikawa in~\cite{Nishikawa: 84}. In this setting, they obtained a suitable Simons type formula and, as application, they showed that such a spacelike submanifold must be totally geodesic or the square norm of its second fundamental form must be bounded, extending the results of T.~Ishihara in~\cite{Ishihara: 88} and S.~Nishikawa in~\cite{Nishikawa: 84}. Afterwards, J.~G.~Araujo et al.\ in~\cite{Araujo:17} studied $n$-dimensional complete linear Weingarten spacelike submanifolds $M^n$ with flat normal bundle and parallel normalized mean curvature vector field immersed in an $(n + p)$-dimensional locally symmetric semi-Riemannian manifold $\mathbb{L}_{p}^{n+p}$. 

In the work of De Lima et al. in~\cite{Araujo:17} they worked with space $\mathbb{L}^{n+p}_p$ and obtained sufficient conditions to guarantee that, in fact, $p = 1$ and $M^n$ is isometric to an isoparametric hypersurface of $\mathbb{L}^{n+1}_1$ having two distinct principal curvatures, one of which is simple. Next, working in this same context, the authors of the present paper, jointly with J.~G.~Araujo et al.\ in~\cite{Weiller:2019}, obtained another characterization result assuming an appropriate boundedness on the square norm of the second fundamental form of $M^n$ and considering the case that the ambient space $\mathbb{L}^{n+p}_p$ is also conformally flat, in order to reduce the codimension to $p = 1$. One year later, Barboza et al. in~\cite{Weiller:2023} studied complete linear Weingarten spacelike submanifolds with parallel normalized mean curvature vector field and flat normal bundle in a locally symmetric semi-Riemannian space $\mathbb{L}^{n+p}_p$ with index $p > 1$ and obeying the same set of curvature conditions assumed in~\cite{Weiller:2019} and~\cite{Araujo:17}. They established a more refined version of the Omori Yau maximum principle (see Proposition~1 in \cite{Weiller:2023}), which enables us to prove that such a spacelike submanifold must be either totally umbilical or isometric to an isoparametric submanifold of the ambient space (see Theorem~1 in \cite{Weiller:2023} ). Afterwards, they took over the parabolicity with respect to a modified Cheng Yau operator and an integrability property in order to get additional characterization results (see Theorems~2 and~3 in \cite{Weiller:2023}).

In this work, let us denote by $\mathbb{L}^{n+p}_q$ an $(n + p)$-dimensional locally symmetric semi-Riemannian space with index $q$ where $1 \leq q \leq p$. We recall that a semi-Riemannian space $\mathbb{L}^{n+p}_q$ with $1\leq q \leq p$ is said to be \emph{locally symmetric} when its curvature tensor $\overline{R}$ is parallel, that is, $\overline{\nabla} \overline{R} = 0$, where $\overline{\nabla}$ denotes the Levi Civita connection of $\mathbb{L}^{n+p}_q$. An $n$-dimensional submanifold $M^n$ immersed in $\mathbb{L}^{n+p}_q$ is said to be \emph{spacelike} if the induced metric on $M^n$ is positive definite.

We will denote by \text{LW-Submanifold} a submanifold $M^n$ of $\mathbb{L}^{n+p}_q$, $(1\leq q \leq p)$ \emph{linear Weingarten}, that is, a submanifold that its mean curvature $H$ and its normalized scalar curvature $R$ satisfy a linear relation of the type 
\begin{equation}\label{lw_relation}
R = aH + b, \quad \quad ~ a,b \in \mathbb{R}.
\end{equation}

Motivated by the rigidity phenomena for spacelike submanifolds satisfying Weingarten-type relations, in this paper we investigate complete spacelike linear Weingarten submanifolds (LW-submanifolds) immersed in the locally symmetric semi-Riemannian space $\mathbb{L}_{q}^{n+p}$, endowed with parallel normalized mean curvature vector field and flat normal bundle. Our approach combines analytic techniques involving the Cheng--Yau modified operator $\mathcal{L}$ with geometric inequalities derived from Simons-type formulas. In particular, we establish a sharp lower bound for $\mathcal{L}(nH)$ in terms of the traceless second fundamental form, which plays a central role throughout the paper.

More precisely, under suitable curvature assumptions on the ambient space and natural hypotheses on the second fundamental form, we obtain characterization and gap results showing that such submanifolds are either totally umbilical or isoparametric. These conclusions are achieved via three complementary methods: the Omori--Yau maximum principle, the $\mathcal{L}$-parabolicity of the underlying manifold, and an integrability condition on the gradient of the mean curvature. Each of these approaches leads to a rigidity phenomenon, revealing that the extremal behavior of the polynomial $P_{n,p,c,H}$ governs the geometry of the immersion.

The paper is organized as follows. In Section~\ref{sec:preliminaries} contains the necessary preliminaries on spacelike submanifolds in semi-Riemannian manifolds, together with the notation used throughout the paper. In Section~\ref{sec:simons}, we derive a Simons-type formula for spacelike submanifolds immersed in $\mathbb{L}_{q}^{n+p}$, which constitutes a fundamental analytic tool in our arguments. In Section~\ref{sec:key lemmas} is devoted to curvature constraints, explicit examples, and the key lemmas that will be employed in the proof of the main results. In Section~\ref{sec:main result}, we establish the characterization results for linear Weingarten spacelike submanifolds in $\mathbb{L}_{q}^{n+p}$. More precisely, Subsection~5.1 deals with rigidity results obtained via the Omori--Yau maximum principle, Subsection~5.2 addresses the characterization through the $\mathcal{L}$-parabolicity of the underlying manifold, and Subsection~5.3 is devoted to results obtained under an integrability condition on the gradient of the mean curvature.

\section{Preliminaries}\label{sec:preliminaries}

Let $M^n$ be an spacelike submanifold immersed in a locally symmetric semi-Riemannian space $\mathbb{L}_{q}^{n+p}$ of index $q$ with $1\leq q\leq p.$
We choose a local field of semi-Riemannian orthonormal frame $\{e_1,\cdots,e_{n+p}\}$ in $\mathbb{L}_{q}^{n+p},$
with dual coframe $\{\omega_1,\cdots,\omega_{n+p}\}$, such that, at each
point of $M^n$, $e_1,\cdots,e_n$ are tangent to $M^n$ and $e_{n+1},\cdots,e_{n+p}$ are normal to $M^n$. We have
that the pseudo-Riemannian metric $ds^2$ of $\mathbb{L}_{q}^{n+p}$ can be written as
\begin{eqnarray*}
d\overline{s}^2=\sum_{A}\epsilon_{A}\omega_{A}^2,
\end{eqnarray*}
where
$\epsilon_i=1, ~~ 1\leq i\leq n; \quad \epsilon_{\beta} = 1, ~~ n+1\leq \beta\leq n+p-q;\quad \epsilon_{\gamma}=-1, ~~ n+p-q+1\leq\gamma\leq n+p.$
\\
We will use the following convention for indices
$$1\leq A, B, C,\cdots\leq n + p; \quad 1\leq i, j, k,\cdots \leq n; \quad n + 1\leq \alpha, \alpha' \leq n + p.$$
$$n + 1\leq \beta,\beta'\leq n + p-q; \quad  n + p-q+1\leq \gamma,\gamma'\leq n+p.$$
Denoting by $\{\omega_{AB}\}$ the connection forms of $\mathbb{L}_{q}^{n+p}$, we have that the structure equations
of $\mathbb{L}_{q}^{n+p}$ are given by
\begin{eqnarray}
d\omega_{A}&=&\sum_{B}\epsilon_{B}\omega_{AB}\times \omega_{B},\quad \omega_{AB}+\omega_{BA}=0,
\end{eqnarray}
\begin{eqnarray}\label{derivada}
d\omega_{AB}&=&\sum_{C}\epsilon_{C}\omega_{AC}\times \omega_{CB}-\dfrac{1}{2}\sum_{C,D}\epsilon_{C}\epsilon_{D}\overline{R}_{ABCD}\omega_{C}\times \omega_{D},
\end{eqnarray}
where, $\overline{R}_{ABCD}$,$\overline{R}_{CD}$ and $\overline{R}$ denote respectively the Riemannian curvature
tensor, the Ricci tensor and the scalar curvature of the Lorentz space $\mathbb{L}_{q}^{n+p}.$
In this setting, we have
\begin{eqnarray*}
\overline{R}_{CD}&=&\sum_{B}\epsilon_{B}\overline{R}_{CBDB},\\
\overline{R}&=&\sum_{A}\epsilon_{A}\overline{R}_{AA}.
\end{eqnarray*}
We restrict forms to $M^n$, so that we have
$\omega_{\alpha}=0,\quad \alpha=n+1,\cdots, n+p,$
and the induced metric $ds^2$ of $M^n$ is written as $ds^2=\sum_{i}\omega_i^2.$
Since $\sum_{i}\omega_{\alpha i}\times \omega_{i}=d\omega_{\alpha}$ and by Cartan's Lemma we can write 
\begin{eqnarray}\label{COMUTATIVIDADE}
\omega_{i\alpha}=\sum_{j}h_{ij}^{\alpha}\omega_{j},\quad h_{ij}^{\alpha}=h_{ji}^{\alpha}.
\end{eqnarray}
The quadratic form
\begin{eqnarray*}
B=\sum_{i,j,\alpha}\epsilon_{\alpha}h_{ij}^{\alpha}\omega_i\omega_je_{\alpha},
\end{eqnarray*}
is the second fundamental form of $M^n$. Denote 
\begin{eqnarray*}
H^{\alpha}=\dfrac{1}{n}\sum_{i}h_{ii}^{\alpha},\quad \alpha=n+1,\cdots,n+p.
\end{eqnarray*}
The mean curvature vector $h$ is expressed as $h=\sum_{\alpha}\epsilon_{\alpha}H^{\alpha}e_{\alpha}$ and denoting by $H$ the length of $h$ and by $S$ the squared norm of the second fundamental form $B$, we have

\begin{eqnarray*}
H=||h||=\sum_{\alpha}(H^{\alpha})^2
\end{eqnarray*}
and
\begin{eqnarray*}
S=\sum_{\alpha,i,j}(h_{ij}^{\alpha})^2.
\end{eqnarray*}
We can write the structure equations of $M^{n}$ as follows
\begin{eqnarray*}
d\omega_i=\sum_{j}\omega_{ij}\times \omega_j,\quad \omega_{ji}+\omega_{ij}=0,\\
d\omega_{ij}=\sum_{k}\omega_{ik}\times \omega_{kj}-\dfrac{1}{2}\sum_{k,l}R_{ijkl}\omega_k\times \omega_l,
\end{eqnarray*}
where $R_{ijkl}$ are the components of the curvature tensor of $M^n.$ Using the previous structure
equations, we obtain the Gauss Equation
\begin{eqnarray}\label{EQUACAO DE GAUSS}
R_{mkjk}=\overline{R}_{mkjk}+\sum_{\beta}(h_{mj}^{\beta}h_{kk}^{\beta}-h_{mk}^{\beta}h_{kj}^{\beta})-\sum_{\gamma}(h_{mj}^{\gamma}h_{kk}^{\gamma}-h_{mk}^{\gamma}h_{kj}^{\gamma}).
\end{eqnarray}
Moreover, we get the Codazzi Equation
\begin{eqnarray}\label{EQUACAO DE CODAZZI}
h_{ijkk}^{\alpha}=\overline{R}_{\alpha ijkk}+h_{kijk}^{\alpha}.
\end{eqnarray}
In particular, the components of the Ricci tensor $R_{ik}$ and the normalized scalar curvature $R$
are given, respectively, by

\begin{eqnarray}
n(n-1)R=\sum_{i,j}\overline{R}_{ijij}+n^2\sum_{\beta}(H^{\beta})^2-n^2\sum_{\gamma}(H^{\gamma})^2-S_1+S_2,
\end{eqnarray}
where 
$$H^{\beta}=\dfrac{1}{n}\sum_{j}h_{jj}^{\beta},\quad H^{\gamma}=\dfrac{1}{n}\sum_{j}h_{jj}^{\gamma},\quad S_{1}=\sum_{\beta,i,j}(h_{ij}^{\beta})^2, \quad S_2=\sum_{\gamma,i,j}(h_{ij}^{\gamma})^2.$$
In this setting, we have $S = S_1+S_2.$ We will also consider that the second fundamental form is locally timelike, in the sense that, $H^\beta=0$ for $n+1\leq \beta \leq n+p-q$, thus
\begin{equation}\label{equacaoRS}
S_1 + n(n-1)R = \sum_{i,j} \overline{R}_{ijij} - n^2H^2 + S_2.
\end{equation}
Moreover, we suppose that $M^n$ has flat normal bundle, that is, $\overline{R}^{\perp}=0$ (equivalently $\overline{R}_{\alpha \beta jk}=0$), then $\overline{R}_{\alpha \beta jk}$ satisfy Ricci Equation

\begin{eqnarray}\label{EQUACAO DE RICCI}
\overline{R}_{\alpha \alpha' ij}=\sum_{k}(h_{ik}^{\alpha}h_{kj}^{\alpha'}-h_{kj}^{\alpha}h_{ik}^{\alpha'}).
\end{eqnarray}
Define the first and the second covariant derivarives of $\{h_{ij}^{\alpha}\}$, say $\{h_{ijk}^{\alpha}\}$ and $\{h_{ijkl}^{\alpha}\}$ by
$$
\begin{aligned}
\sum_{k} h_{ijk}^{\alpha}\omega_k 
  &= dh_{ij}^{\alpha} 
   + \sum_{k} \big(h_{kj}^{\alpha}\omega_{ki} + h_{ik}^{\alpha}\omega_{\beta \alpha}\big) 
   + \sum_{\beta} h_{ij}^{\beta}\omega_{\beta \alpha} 
   - \sum_{\gamma} h_{ij}^{\gamma}\omega_{\gamma \alpha}, \\[6pt]
\sum_{l} h_{ijkl}^{\alpha}\omega_l 
  &= dh_{ijk}^{\alpha} 
   + \sum_{l} h_{ijk}^{\alpha}\omega_{li} 
   + \sum_{l} h_{ilk}^{\alpha}\omega_{ij} 
   + \sum_{l} h_{ijl}^{\alpha}\omega_{lk} 
   + \sum_{\beta} h_{ijk}^{\beta}\omega_{\beta \alpha} 
   - \sum_{\gamma} h_{ijk}^{\gamma}\omega_{\gamma \alpha}.
\end{aligned}
$$
Furthermore, by exterior differentiation of~\eqref{derivada}, we get the following Ricci identity
\begin{eqnarray}\label{ricci identity}
h_{ijkl}^{\alpha}-h_{ijlk}^{\alpha}=\sum_{m}h_{mj}^{\alpha}R_{mikl}+\sum_{m}h_{mi}^{\alpha}R_{mjkl}.
\end{eqnarray}

\section{A Simons type formula for spacelike submanifolds in $\mathbb{L}^{n+p}_ q$}\label{sec:simons}

In what follows, we denote by $\nabla$ and $\Delta$ the gradient and Laplacian operator in the metric of the spacelike submanifold $M^n$. Then, the Laplacian of the second fundamental form $h^{\alpha}_{ij}$ is defined by 
\begin{equation}\label{definicao_Lap}
\Delta h^{\alpha}_{ij} = \sum_{k=1}^n h^{\alpha}_{ijkk}.
\end{equation}
From \eqref{EQUACAO DE CODAZZI} and \eqref{ricci identity}, we obtain
\begin{eqnarray}\label{laplac}
\Delta h_{ij}^{\alpha} &=& \sum_k h_{kkij}^{\alpha}+ \sum_{m,k}h_{mk}^{\alpha}R_{mijk}+\sum_{m,k}h_{im}^{\alpha}R_{mkjk}
\end{eqnarray}
$$
+\sum_{k,\beta}h_{ik}^{\beta}R_{\beta\alpha jk} -\sum_{k,\gamma}h_{ik}^{\gamma}R_{\gamma\alpha jk}.
$$
On the other hand, we have
\begin{eqnarray*}
\dfrac{1}{2}\Delta S=\sum_{i,j,\alpha}h_{ij}^{\alpha}\Delta h_{ij}^{\alpha}+\sum_{i,j,k,\alpha}(h_{ijk}^{\alpha})^2.
\end{eqnarray*}
Using the definition \eqref{definicao_Lap} and the fact that $|\nabla B|^2=\displaystyle\sum_{i,j,k}(h_{ijk}^{\alpha})^2$ we have

\begin{eqnarray*}
\dfrac{1}{2}\Delta S=\sum_{i,j,k,\alpha}h_{ij}^{\alpha}h_{ijkk}^{\alpha}+|\nabla B|^2.
\end{eqnarray*}
From the Codazzi Equation in ~\eqref{EQUACAO DE CODAZZI}
and the relaction ~\eqref{COMUTATIVIDADE} we get

\begin{eqnarray*}
\dfrac{1}{2}\Delta S&=&\sum_{i,j,k,\alpha}h_{ij}^{\alpha}\left(\overline{R}_{\alpha ijkk}+h_{kijk}^{\alpha}\right)+|\nabla B|^2\\
&=&\sum_{i,j,k,\alpha}h_{ij}^{\alpha}\overline{R}_{\alpha ijkk}+\sum_{i,j,k,\alpha}h_{ij}^{\alpha}h_{kijk}^{\alpha}+|\nabla B|^2.
\end{eqnarray*}
Thus, from of the ~\eqref{EQUACAO DE RICCI} and ~\eqref{ricci identity}, we have
\begin{eqnarray}\label{laplaciano de S}
\dfrac{1}{2}\Delta S&=&\sum_{i,j,k,\alpha}h_{ij}^{\alpha}\overline{R}_{\alpha ijkk}+\sum_{i,j,k,\alpha}h_{ij}^{\alpha}\left(h_{kikj}^{\alpha}+\sum_{k}h_{km}^{\alpha}R_{mijk}+\sum_{m}h_{mi}^{\alpha}R_{mkjk}\right)+|\nabla B|^2\nonumber \\
&=&|\nabla B|^2+\sum_{i,j,k,\alpha}h_{ij}^{\alpha}\overline{R}_{\alpha ijkk}+\sum_{i,j,k,\alpha}h_{ij}^{\alpha}h_{kikj}^{\alpha} +\sum_{i,j,k,m,\alpha}h_{ij}^{\alpha}h_{km}^{\alpha}R_{mijk} \nonumber \\ &&+\sum_{i,j,k,m,\alpha}h_{ij}^{\alpha}h_{mi}^{\alpha}R_{mkjk}.
\end{eqnarray}
From Codazzi Equation in ~\eqref{EQUACAO DE CODAZZI} and the fact that $(h_{ikl})_j=h_{iklj}$, we have
\begin{eqnarray}\label{equacao de codazzi 1}
h_{kikj}^{\alpha}=h_{kkij}^{\alpha}+\overline{R}_{\alpha kikj}.
\end{eqnarray}
Using ~\eqref{equacao de codazzi 1} in ~\eqref{laplaciano de S}, we get

\begin{eqnarray*}
\dfrac{1}{2}\Delta S&=&|\nabla B|^2+\sum_{i,j,k,\alpha}h_{ij}^{\alpha}\overline{R}_{\alpha ijkk}+\sum_{i,j,k,\alpha}h_{ij}^{\alpha}\left(h_{kkij}^{\alpha}+\overline{R}_{\alpha kikj}\right)\\
&&+\sum_{i,j,k,m,\alpha}h_{ij}^{\alpha}h_{km}^{\alpha}R_{mijk}+\sum_{i,j,k,m,\alpha}h_{ij}^{\alpha}h_{mi}^{\alpha}R_{mkjk} \\
&=&|\nabla B|^2+\sum_{i,j,k,\alpha}h_{ij}^{\alpha}\overline{R}_{\alpha ijkk}+\sum_{i,j,k,\alpha}h_{ij}^{\alpha}h_{kkij}^{\alpha}+\sum_{i,j,k,\alpha}h_{ij}^{\alpha}\overline{R}_{\alpha kikj} \\
&&+\sum_{i,j,k,m,\alpha}h_{ij}^{\alpha}h_{km}^{\alpha}R_{mijk}+\sum_{i,j,k,m,\alpha}h_{ij}^{\alpha}h_{mi}^{\alpha}R_{mkjk}.
\end{eqnarray*}
Thence,

\begin{eqnarray*}
\sum_{i,j,k,m,\alpha}h_{ij}^{\alpha}h_{km}^{\alpha}R_{mijk}&=&\sum_{i,j,k,m,\alpha}h_{ij}^{\alpha}h_{km}^{\alpha}\left[\overline{R}_{mijk}+\sum_{\beta}(h_{mj}^{\beta}h_{ik}^{\beta}-h_{mk}^{\beta}h_{ij}^{\beta}-\sum_{\gamma}(h_{mj}^{\gamma}h_{ik}^{\gamma}-h_{mk}^{\gamma}h_{ij}^{\gamma})\right]\\
&=&\sum_{i,j,k,m,\alpha}h_{ij}^{\alpha}h_{km}^{\alpha}\overline{R}_{mijk}+\sum_{i,j,k,m,\alpha,\beta}h_{ij}^{\alpha}h_{km}^{\alpha}h_{mj}^{\beta}h_{ik}^{\beta}-\sum_{i,j,k,m,\alpha,\beta}h_{ij}^{\alpha}h_{km}^{\alpha}h_{mk}^{\beta}h_{ij}^{\beta}\\
&&- \sum_{i,j,k,m,\alpha,\gamma}h_{ij}^{\alpha}h_{km}^{\alpha}h_{mj}^{\gamma}h_{ik}^{\gamma}+\sum_{i,j,k,m,\alpha,\gamma}h_{ij}^{\alpha}h_{km}^{\alpha}h_{mk}^{\gamma}h_{ij}^{\gamma},
\end{eqnarray*}
and 
\begin{eqnarray*}
\sum_{i,j,k,m,\alpha}h_{ij}^{\alpha}h_{mi}^{\alpha}R_{mkjk}&=&\sum_{i,j,k,m,\alpha}h_{ij}^{\alpha}h_{mi}^{\alpha}\left[\overline{R}_{mkjk}+\sum_{\beta}(h_{mj}^{\beta}h_{kk}^{\beta}-h_{mk}^{\beta}h_{kj}^{\beta})-\sum_{\gamma}(h_{mj}^{\gamma}h_{kk}^{\gamma}-h_{mk}^{\gamma}h_{kj}^{\gamma})\right]\\
&=&\sum_{i,j,k,m,\alpha}h_{ij}^{\alpha}h_{mi}^{\alpha}\overline{R}_{mkjk}+\sum_{i,j,k,m,\alpha,\beta}h_{ij}^{\alpha}h_{mi}^{\alpha}h_{mj}^{\beta}h_{kk}^{\beta} -\sum_{i,j,k,m,\alpha,beta}h_{ij}^{\alpha}h_{mi}^{\alpha}h_{mk}^{\beta}h_{kj}^{\beta} \\&&-\sum_{i,j,k,m,\alpha,\gamma}h_{ij}^{\alpha}h_{mi}^{\alpha}h_{mj}^{\gamma}h_{kk}^{\gamma} +\sum_{i,j,k,m,\alpha,\gamma}h_{ij}^{\alpha}h_{mi}^{\alpha}h_{mk}^{\gamma}h_{kj}^{\gamma}.
\end{eqnarray*}

At this point, we will deal with spacelike submanifolds $M^{n}$ of $L_{q}^{n+p}$ having parallel normalized mean curvature vector field, which means that the mean curvature function $H$ is positive and that the corresponding normalized mean curvature vector field $\frac{h}{H}$ is parallel as a section of the normal bundle. In this setting, we can choose a local orthonormal frame $\{e_{1},\ldots,e_{n+p}\}$ such that $e_{n+p-q+1}=\frac{h}{H}$, we have that

\begin{eqnarray}
H^{n+p-q+1}=\dfrac{1}{n}tr(h^{n+p-q+1})=H,
\end{eqnarray}
and 
\begin{eqnarray}
H^{\alpha}=\dfrac{1}{n}tr(h^{\alpha})=0,\quad \alpha > n+p-q+1.
\end{eqnarray}
Thus,
\begin{eqnarray}
\sum_{i,j,k,m,\alpha}h_{ij}^{\alpha}h_{mi}^{\alpha}R_{mkjk}&=&\sum_{i,j,k,m,\alpha}h_{ij}^{\alpha}h_{mi}^{\alpha}\overline{R}_{mkjk}+n\sum_{i,j,m,\alpha,\beta}h_{ij}^{\alpha}h_{mi}^{\alpha}h_{mj}^{\beta}H^{\beta} \nonumber \\
&&-\sum_{i,j,k,m,\alpha,\beta}h_{ij}^{\alpha}h_{mi}^{\alpha}h_{mk}^{\beta}h_{kj}^{\beta}-n\sum_{i,j,m,\alpha,\gamma}h_{ij}^{\alpha}h_{mi}^{\alpha}h_{mj}^{\gamma}H^{\gamma}\\
&&+\sum_{i,j,k,m,\alpha,\gamma}h_{ij}^{\alpha}h_{mi}^{\alpha}h_{mk}^{\gamma}h_{kj}^{\gamma}. \nonumber
\end{eqnarray}
Note that $n\displaystyle\sum_{i,j,m,\alpha,\beta}h_{ij}^{\alpha}h_{mi}^{\alpha}h_{mj}^{\beta}H^{\beta}=0$ because $H^{\beta}=0.$ Thus,
\begin{eqnarray}
\sum_{i,j,k,m,\alpha}h_{ij}^{\alpha}h_{mi}^{\alpha}R_{mkjk}&=&\sum_{i,j,k,m,\alpha}h_{ij}^{\alpha}h_{mi}^{\alpha}\overline{R}_{mkjk}-n\sum_{i,j,m,\alpha}h_{ij}^{\alpha}h_{mi}^{\alpha}h_{mj}^{n+p-q+1}H \nonumber \\
&& -\sum_{i,j,k,m,\alpha,\beta}h_{ij}^{\alpha}h_{mi}^{\alpha}h_{mk}^{\beta}h_{kj}^{\beta} \sum_{i,j,k,m,\alpha,\gamma}h_{ij}^{\alpha}h_{mi}^{\alpha}h_{mk}^{\gamma}h_{kj}^{\gamma}.
\end{eqnarray}
Therefore, 

\begin{eqnarray*}
\sum_{i,j,k,m,\alpha}h_{ij}^{\alpha}h_{km}^{\alpha}R_{mijk}+\sum_{i,j,k,m,\alpha}h_{ij}^{\alpha}h_{mi}^{\alpha}R_{mkjk}
&=&\sum_{i,j,k,m,\alpha}h_{ij}^{\alpha}h_{km}^{\alpha}\overline{R}_{mijk}+\sum_{i,j,k,m,\alpha,\beta}h_{ij}^{\alpha}h_{km}^{\alpha}h_{mj}^{\beta}h_{ik}^{\beta} \\&-&\sum_{i,j,k,m,\alpha,\beta}h_{ij}^{\alpha}h_{km}^{\alpha}h_{mk}^{\beta}h_{ij}^{\beta}-\sum_{i,j,k,m,\alpha,\gamma}h_{ij}^{\alpha}h_{km}^{\alpha}h_{mj}^{\gamma}h_{ik}^{\gamma}\\&+&\sum_{i,j,k,m,\alpha,\gamma}h_{ij}^{\alpha}h_{km}^{\alpha}h_{mk}^{\gamma}h_{ij}^{\gamma} + \sum_{i,j,k,m,\alpha}h_{ij}^{\alpha}h_{mi}^{\alpha}\overline{R}_{mkjk} \\
&-&n\sum_{i,j,m,\alpha}h_{ij}^{\alpha}h_{mi}^{\alpha}h_{mj}^{n+p-q+1}H - \sum_{i,j,k,m,\alpha,\beta}h_{ij}^{\alpha}h_{mi}^{\alpha}h_{mk}^{\beta}h_{kj}^{\beta}\\
&+&\sum_{i,j,k,m,\alpha,\gamma}h_{ij}^{\alpha}h_{mi}^{\alpha}h_{mk}^{\gamma}h_{kj}^{\gamma}.
\end{eqnarray*}

Observe that 
\begin{eqnarray*}
\sum_{i,j,k,\alpha}h_{ij}^{\alpha}\overline{R}_{\alpha ijkk}&=&\sum_{i,j,k,\alpha}h_{ij}^{\alpha}\overline{R}_{\alpha ijk;k}+\sum_{i,j,k,\alpha,\alpha'}h_{ij}^{\alpha}h_{ik}^{\alpha'}\overline{R}_{\alpha \alpha' jk} +
\sum_{i,j,k,\alpha,\alpha'}h_{ij}^{\alpha}h_{jk}^{\alpha'}\overline{R}_{\alpha i\alpha' k}\\&&+\sum_{i,j,k,\alpha,\alpha'}h_{ij}^{\alpha}h_{kk}^{\alpha'}\overline{R}_{\alpha ij \alpha'} +\sum_{i,j,k,m,\alpha}h_{ij}^{\alpha}h_{km}^{\alpha}\overline{R}_{mijk},
\end{eqnarray*}
using the Ricci Equation, we have 

\begin{eqnarray*}
\sum_{i,j,k,\alpha,\alpha'}h_{ij}^{\alpha}h_{ik}^{\alpha'}\overline{R}_{\alpha \alpha' jk}&=&\sum_{i,j,k,\alpha,\alpha'}h_{ij}^{\alpha}h_{ik}^{\alpha'}\left[\sum_{m}(h_{jm}^{\alpha}h_{mk}^{\alpha'}-h_{mk}^{\alpha}h_{jm}^{\alpha'})\right]\\
&=&\sum_{i,j,k,m,\alpha,\alpha'}h_{ij}^{\alpha}h_{ik}^{\alpha'}h_{jm}^{\alpha}h_{mk}^{\alpha'}-\sum_{i,j,k,m\,\alpha,\alpha'}h_{ij}^{\alpha}h_{ik}^{\alpha'}h_{mk}^{\alpha}h_{jm}^{\alpha'},
\end{eqnarray*}
thus, we conclude that,

\begin{eqnarray*}
\sum_{i,j,k,\alpha}h_{ij}^{\alpha}\overline{R}_{\alpha ijkk}&=&\sum_{i,j,k,\alpha}h_{ij}^{\alpha}\overline{R}_{\alpha ijk;k}+\sum_{i,j,k,m,\alpha,\alpha'}h_{ij}^{\alpha}h_{ik}^{\alpha'}h_{jm}^{\alpha}h_{mk}^{\alpha'} - \sum_{i,j,k,m,\alpha,\alpha'}h_{ij}^{\alpha}h_{ik}^{\alpha'}h_{mk}^{\alpha}h_{jm}^{\alpha'}\\&&+\sum_{i,j,k,\alpha,\alpha'}h_{ij}^{\alpha}h_{jk}^{\alpha'}\overline{R}_{\alpha i\alpha' k}+\sum_{i,j,k,\alpha,\alpha'}h_{ij}^{\alpha}h_{kk}^{\alpha'}\overline{R}_{\alpha ij \alpha'} + \sum_{i,j,k,m,\alpha}h_{ij}^{\alpha}h_{km}^{\alpha}\overline{R}_{mijk}.
\end{eqnarray*}
On the other hand,

\begin{eqnarray*}
\sum_{i,j,k,\alpha}h_{ij}^{\alpha}\overline{R}_{\alpha kikj}&=&\sum_{i,j,k,\alpha}h_{ij}^{\alpha}\overline{R}_{\alpha kik;j}+\sum_{i,j,k,\alpha,\alpha'}h_{ij}^{\alpha}h_{kj}^{\alpha'}\overline{R}_{\alpha \alpha' ik}+\sum_{i,j,k,\alpha,\alpha'}h_{ij}^{\alpha}h_{ij}^{\alpha'}\overline{R}_{\alpha k \alpha' k}\\
&&+\sum_{i,j,k,\alpha,\alpha'}h_{ij}^{\alpha}h_{kj}^{\alpha'}\overline{R}_{\alpha ki \alpha'}+\sum_{i,j,k,m,\alpha}h_{ij}^{\alpha}h_{jm}^{\alpha}\overline{R}_{mkik},
\end{eqnarray*}
hence,

\begin{eqnarray*}
\sum_{i,j,k,\alpha,\alpha'}h_{ij}^{\alpha}h_{kj}^{\alpha'}\overline{R}_{\alpha \alpha' ik}=\sum_{i,j,k,m,\alpha,\alpha'}h_{ij}^{\alpha}h_{kj}^{\alpha'}h_{im}^{\alpha}h_{mk}^{\alpha'}-\sum_{i,j,k,m,\alpha,\alpha'}h_{ij}^{\alpha}h_{kj}^{\alpha'}h_{mk}^{\alpha}h_{im}^{\alpha'},
\end{eqnarray*}
and yet, 

\begin{eqnarray*}
\sum_{i,j,k,\alpha}h_{ij}^{\alpha}\overline{R}_{\alpha kikj}&=&\sum_{i,j,k,\alpha}h_{ij}^{\alpha}\overline{R}_{\alpha kik;j}+\sum_{i,j,k,m,\alpha,\alpha'}h_{ij}^{\alpha}h_{kj}^{\alpha'}h_{im}^{\alpha}h_{mk}^{\alpha'}\\&&-\sum_{i,j,k,m,\alpha,\alpha'}h_{ij}^{\alpha}h_{kj}^{\alpha'}h_{mk}^{\alpha}h_{im}^{\alpha'}+\sum_{i,j,k,\alpha,\alpha'}h_{ij}^{\alpha}h_{ij}^{\alpha'}\overline{R}_{\alpha k \alpha'k}\\&&+\sum_{i,j,k,\alpha,\alpha'}h_{ij}^{\alpha}h_{kj}^{\alpha'}\overline{R}_{\alpha ki \alpha'}+\sum_{i,j,k,m,\alpha}h_{ij}^{\alpha}h_{jm}^{\alpha}\overline{R}_{mkik},
\end{eqnarray*}
therefore,

\begin{eqnarray*}
\sum_{i,j,k,\alpha}h_{ij}^{\alpha}
  (\overline{R}_{\alpha ijkk}+\overline{R}_{\alpha kikj})
&=& \sum_{i,j,k,\alpha}h_{ij}^{\alpha}
       (\overline{R}_{\alpha ijk;k}+\overline{R}_{\alpha kik;j}) \\[0.3em]
&&+ \sum_{i,j,k,m,\alpha,\alpha'}h_{ij}^{\alpha}h_{ik}^{\alpha'}h_{jm}^{\alpha}h_{mk}^{\alpha'}
   - \sum_{i,j,k,m,\alpha,\alpha'}h_{ij}^{\alpha}h_{ik}^{\alpha'}h_{mk}^{\alpha}h_{jm}^{\alpha'} \\[0.3em]
&&+ \sum_{i,j,k,\alpha,\alpha'}h_{ij}^{\alpha}h_{jk}^{\alpha'}\overline{R}_{\alpha i\alpha' k}
   + \sum_{i,j,k,\alpha,\alpha'}h_{ij}^{\alpha}h_{kk}^{\alpha'}\overline{R}_{\alpha ij \alpha'} \\[0.3em]
&&+ \sum_{i,j,k,m,\alpha}h_{ij}^{\alpha}h_{km}^{\alpha}\overline{R}_{mijk}
   + \sum_{i,j,k,m,\alpha,\alpha'}h_{ij}^{\alpha}h_{kj}^{\alpha'}h_{im}^{\alpha}h_{mk}^{\alpha'} \\[0.3em]
&&- \sum_{i,j,k,m,\alpha,\alpha'}h_{ij}^{\alpha}h_{kj}^{\alpha'}h_{mk}^{\alpha}h_{im}^{\alpha'}
   + \sum_{i,j,k,\alpha,\alpha'}h_{ij}^{\alpha}h_{ij}^{\alpha'}\overline{R}_{\alpha k\alpha' k} \\[0.3em]
&&+ \sum_{i,j,k,\alpha,\alpha'}h_{ij}^{\alpha}h_{kj}^{\alpha'}\overline{R}_{\alpha ki \alpha'}
   + \sum_{i,j,k,m,\alpha}h_{ij}^{\alpha}h_{jm}^{\alpha}\overline{R}_{mkik}.
\end{eqnarray*}
Since that $\mathbb{L}_{q}^{n+p}$ is {symmetric locally}, we have
\begin{eqnarray*}
\sum_{i,j,k,\alpha}h_{ij}^{\alpha}(\overline{R}_{\alpha ijk;k}+\overline{R}_{\alpha kik;j})=0,
\end{eqnarray*}
hence,

\begin{eqnarray*}
\sum_{i,j,k,\alpha}h_{ij}^{\alpha}
   (\overline{R}_{\alpha ijkk}+\overline{R}_{\alpha kikj})
&=& \sum_{i,j,k,m,\alpha,\alpha'}h_{ij}^{\alpha}h_{ik}^{\alpha'}h_{jm}^{\alpha}h_{mk}^{\alpha'}
   - \sum_{i,j,k,m,\alpha,\alpha'}h_{ij}^{\alpha}h_{ik}^{\alpha'}h_{mk}^{\alpha}h_{jm}^{\alpha'} \\[0.3em]
&&+ \sum_{i,j,k,\alpha,\alpha'}h_{ij}^{\alpha}h_{jk}^{\alpha'}\overline{R}_{\alpha i\alpha' k}
   + \sum_{i,j,k,\alpha,\alpha'}h_{ij}^{\alpha}h_{kk}^{\alpha'}\overline{R}_{\alpha ij \alpha'} \\[0.3em]
&&+ \sum_{i,j,k,m,\alpha}h_{ij}^{\alpha}h_{km}^{\alpha}\overline{R}_{mijk}
   + \sum_{i,j,k,m,\alpha,\alpha'}h_{ij}^{\alpha}h_{kj}^{\alpha'}h_{im}^{\alpha}h_{mk}^{\alpha'} \\[0.3em]
&&- \sum_{i,j,k,m,\alpha,\alpha'}h_{ij}^{\alpha}h_{kj}^{\alpha'}h_{mk}^{\alpha}h_{im}^{\alpha'}
   + \sum_{i,j,k,\alpha,\alpha'}h_{ij}^{\alpha}h_{ij}^{\alpha'}\overline{R}_{\alpha k\alpha' k} \\[0.3em]
&&+ \sum_{i,j,k,\alpha,\alpha'}h_{ij}^{\alpha}h_{kj}^{\alpha'}\overline{R}_{\alpha ki \alpha'}
   + \sum_{i,j,k,m,\alpha}h_{ij}^{\alpha}h_{jm}^{\alpha}\overline{R}_{mkik}.
\end{eqnarray*}
Now, observe that

\begin{eqnarray*}
\sum_{i,j,k,\alpha}h_{ij}^{\alpha}h_{kkij}^{\alpha}=n\sum_{i,j,\alpha}h_{ij}^{\alpha}H_{ij}^{\alpha}.
\end{eqnarray*}
Using the fact that $H_{kj}=H_{kj}^{n+p-q+1}$ and $H_{kj}^{\alpha}=0,\quad \alpha \neq n+p-q+1,$ we have

\begin{eqnarray*}
\sum_{i,j,k,\alpha}h_{ij}^{\alpha}h_{kkij}^{\alpha}=n\sum_{i,j}h_{ij}^{n+p-q+1}H_{ij}.
\end{eqnarray*}
Finally, we conclude that
\begin{eqnarray*}
\dfrac{1}{2}\Delta S&=&|\nabla B|^2+\sum_{i,j,k,\alpha}h_{ij}^{\alpha}(\overline{R}_{\alpha ijkk}+\overline{R}_{\alpha kikj})+\sum_{i,j,k,\alpha}h_{ij}^{\alpha}h_{kkij}^{\alpha}\\&&+\sum_{i,j,k,m,\alpha}h_{ij}^{\alpha}h_{km}^{\alpha}R_{mijk}+\sum_{i,j,k,m,\alpha}h_{ij}^{\alpha}h_{mi}^{\alpha}R_{mkjk},
\end{eqnarray*}
this is, 
$$
\begin{aligned}
\dfrac{1}{2}\Delta S \;=\;& |\nabla B|^2
   + \sum_{i,j,k,m,\alpha,\alpha'} h_{ij}^{\alpha}h_{ik}^{\alpha'}h_{jm}^{\alpha}h_{mk}^{\alpha'}
   - \sum_{i,j,k,m,\alpha,\alpha'} h_{ij}^{\alpha}h_{ik}^{\alpha'}h_{mk}^{\alpha}h_{jm}^{\alpha'} \\[6pt]
 &+ \sum_{i,j,k,\alpha,\alpha'} h_{ij}^{\alpha}h_{jk}^{\alpha'}\overline{R}_{\alpha i\alpha' k}
   + \sum_{i,j,k,\alpha,\alpha'} h_{ij}^{\alpha}h_{kk}^{\alpha'}\overline{R}_{\alpha ij \alpha'}
   + \sum_{i,j,k,m,\alpha} h_{ij}^{\alpha}h_{km}^{\alpha}\overline{R}_{mijk} \\[6pt]
 &+ \sum_{i,j,k,m,\alpha,\alpha'} h_{ij}^{\alpha}h_{kj}^{\alpha'}h_{im}^{\alpha}h_{mk}^{\alpha'}
   - \sum_{i,j,k,m,\alpha,\alpha'} h_{ij}^{\alpha}h_{kj}^{\alpha'}h_{mk}^{\alpha}h_{im}^{\alpha'}
   + \sum_{i,j,k,\alpha,\alpha'} h_{ij}^{\alpha}h_{ij}^{\alpha'}\overline{R}_{\alpha k\alpha' k} \\[6pt]
 &+ \sum_{i,j,k,\alpha,\alpha'} h_{ij}^{\alpha}h_{kj}^{\alpha'}\overline{R}_{\alpha ki \alpha'}
   + \sum_{i,j,k,m,\alpha} h_{ij}^{\alpha}h_{jm}^{\alpha}\overline{R}_{mkik}
   + n\sum_{i,j} h_{ij}^{n+p-q+1}H_{ij} \\[6pt]
 &+ \sum_{i,j,k,m,\alpha} h_{ij}^{\alpha}h_{km}^{\alpha}\overline{R}_{mijk}
   + \sum_{i,j,k,m,\alpha,\beta} h_{ij}^{\alpha}h_{km}^{\alpha}h_{mj}^{\beta}h_{ik}^{\beta}
   - \sum_{i,j,k,m,\alpha,\beta} h_{ij}^{\alpha}h_{km}^{\alpha}h_{mk}^{\beta}h_{ij}^{\beta} \\[6pt]
 &- \sum_{i,j,k,m,\alpha,\gamma} h_{ij}^{\alpha}h_{km}^{\alpha}h_{mj}^{\gamma}h_{ik}^{\gamma}
   + \sum_{i,j,k,m,\alpha,\gamma} h_{ij}^{\alpha}h_{km}^{\alpha}h_{mk}^{\gamma}h_{ij}^{\gamma}
   + \sum_{i,j,k,m,\alpha} h_{ij}^{\alpha}h_{mi}^{\alpha}\overline{R}_{mkjk} \\[6pt]
 &- n\sum_{i,j,m,\alpha} h_{ij}^{\alpha}h_{mi}^{\alpha}h_{mj}^{n+p-q+1}H
   - \sum_{i,j,k,m,\alpha,\beta} h_{ij}^{\alpha}h_{mi}^{\alpha}h_{mk}^{\beta}h_{kj}^{\beta}
   + \sum_{i,j,k,m,\alpha,\gamma} h_{ij}^{\alpha}h_{mi}^{\alpha}h_{mk}^{\gamma}h_{kj}^{\gamma}.
\end{aligned}
$$
Also note that
\begin{eqnarray*}
\sum_{i,j,k,m,\alpha,\alpha'}h_{ij}^{\alpha}h_{ik}^{\alpha'}h_{jm}^{\alpha}h_{mk}^{\alpha'}&-&\sum_{i,j,k,m,\alpha,\alpha'}h_{ij}^{\alpha}h_{ik}^{\alpha'}h_{mk}^{\alpha}h_{jm}^{\alpha'}\\&+&\sum_{i,j,k,m,\alpha,\alpha'}h_{ij}^{\alpha}h_{kj}^{\alpha'}h_{im}^{\alpha}h_{mk}^{\alpha'}-\sum_{i,j,k,m,\alpha,\alpha'}h_{ij}^{\alpha}h_{kj}^{\alpha'}h_{mk}^{\alpha}h_{im}^{\alpha'}\\
&=&\dfrac{1}{2}\sum_{\alpha,\alpha'}N(h^{\alpha}h^{\alpha'}-h^{\alpha'}h^{\alpha})+\sum_{\alpha,\alpha'}tr(h^{\alpha}h^{\alpha'}h^{\alpha'}h^{\alpha})-\sum_{\alpha,\alpha'}tr(h^{\alpha}h^{\alpha'})^2,
\end{eqnarray*}
on the other hand,
$$
\begin{aligned}
\sum_{i,j,k,m,\alpha,\beta} h_{ij}^{\alpha}h_{km}^{\alpha}h_{mj}^{\beta}h_{ik}^{\beta}
&- \sum_{i,j,k,m,\alpha,\beta} h_{ij}^{\alpha}h_{km}^{\alpha}h_{mk}^{\beta}h_{ij}^{\beta} - \sum_{i,j,k,m,\alpha,\beta} h_{ij}^{\alpha}h_{mi}^{\alpha}h_{mk}^{\beta}h_{kj}^{\beta} \\[6pt]
& = \sum_{\alpha,\beta} \operatorname{tr}\!\big((h^{\alpha}h^{\beta})^2\big)
   - \sum_{\alpha,\beta} \big[\operatorname{tr}(h^{\alpha}h^{\beta})\big]^2
   - \sum_{\alpha,\beta} \operatorname{tr}(h^{\alpha}h^{\beta}h^{\beta}h^{\alpha}),
\end{aligned}
$$
and still

\begin{eqnarray*}
\sum_{i,j,k,m,\alpha,\gamma}h_{ij}^{\alpha}h_{km}^{\alpha}h_{mk}^{\gamma}h_{ij}^{\gamma}&-&\sum_{i,j,k,m,\alpha,\gamma}h_{ij}^{\alpha}h_{km}^{\alpha}h_{mj}^{\gamma}h_{ik}^{\gamma}+\sum_{i,j,k,m,\alpha,\gamma}h_{ij}^{\alpha}h_{mi}^{\alpha}h_{mk}^{\gamma}h_{kj}^{\gamma}\\
&=&\sum_{\alpha,\gamma}[tr(h^{\alpha}h^{\gamma})]^2-\sum_{\alpha,\gamma}tr(h^{\alpha}h^{\gamma})^2 +\sum_{\alpha,\gamma}tr(h^{\alpha}h^{\gamma}h^{\gamma}h^{\alpha}).
\end{eqnarray*}
Therefore of these last four equalities, we obtain

\begin{equation}\label{eq:DeltaS}
\begin{aligned}
\dfrac{1}{2}\Delta S \;=\;& |\nabla B|^2
  + \sum_{i,j,k,\alpha,\alpha'} h_{ij}^{\alpha}h_{jk}^{\alpha'}\overline{R}_{\alpha i \alpha' k}
  + \sum_{i,j,k,\alpha,\alpha'} h_{ij}^{\alpha}h_{kk}^{\alpha'}\overline{R}_{\alpha ij \alpha'}
  + \sum_{i,j,k,m,\alpha} h_{ij}^{\alpha}h_{km}^{\alpha}\overline{R}_{mijk} \\
&+ \sum_{i,j,k,\alpha,\alpha'} h_{ij}^{\alpha}h_{ij}^{\alpha'}\overline{R}_{\alpha k \alpha' k}
  + \sum_{i,j,k,\alpha,\alpha'} h_{ij}^{\alpha}h_{kj}^{\alpha'}\overline{R}_{\alpha ki \alpha'}
  + \sum_{i,j,k,m,\alpha} h_{ij}^{\alpha}h_{jm}^{\alpha}\overline{R}_{mkik} \\
&+ n\sum_{i,j} h_{ij}^{\,n+p-q+1}H_{ij}
  + \sum_{i,j,k,m,\alpha} h_{ij}^{\alpha}h_{km}^{\alpha}\overline{R}_{mijk}
  + \sum_{i,j,k,m,\alpha} h_{ij}^{\alpha}h_{mi}^{\alpha}\overline{R}_{mkjk} \\
&+ n\sum_{i,j,m,\alpha} h_{ij}^{\alpha}h_{mi}^{\alpha}h_{mj}^{\,n+p-q+1}H
  + \dfrac{1}{2}\sum_{\alpha,\alpha'} N\!\big(h^{\alpha}h^{\alpha'}-h^{\alpha'}h^{\alpha}\big)
  + \sum_{\alpha,\alpha'} \operatorname{tr}\!\big(h^{\alpha}h^{\alpha'}h^{\alpha'}h^{\alpha}\big) \\
&- \sum_{\alpha,\alpha'} \big(\operatorname{tr}(h^{\alpha}h^{\alpha'})\big)^2
  + \sum_{\alpha,\beta} \big(\operatorname{tr}(h^{\alpha}h^{\beta})\big)^2
  - \sum_{\alpha,\beta} \big[\operatorname{tr}(h^{\alpha}h^{\beta})\big]^2 \\
&- \sum_{\alpha,\beta} \operatorname{tr}\!\big(h^{\alpha}h^{\beta}h^{\beta}h^{\alpha}\big)
  + \sum_{\alpha,\gamma} \big[\operatorname{tr}(h^{\alpha}h^{\gamma})\big]^2
  - \sum_{\alpha,\gamma} \big(\operatorname{tr}(h^{\alpha}h^{\gamma})\big)^2 \\
&+ \sum_{\alpha,\gamma} \operatorname{tr}\!\big(h^{\alpha}h^{\gamma}h^{\gamma}h^{\alpha}\big)\,.
\end{aligned}
\end{equation}
Moreover, since 

\begin{eqnarray}
\sum_{\alpha,\alpha'}tr(h^{\alpha}h^{\beta}h^{\beta}h^{\alpha})-\sum_{\alpha,\beta}tr(h^{\alpha}h^{\beta})^2=\dfrac{1}{2}\sum_{\alpha,\alpha'}N(h^{\alpha}h^{\alpha'}-h^{\alpha'}h^{\alpha}).
\end{eqnarray}
Thus, from the preceding construction, we derive the following Simons-type formula. The following result generalizes the Simons-type formula $(2.10)$ of Gomes et. al in \cite{nazareno} and Lemma 2 established by Araújo et. al \cite{Araujo:17}.

\begin{lemma}\label{formula simons}
Let $X:M^{n}\looparrowright \mathbb{L}^{n+p}_{q}$ ($1\leq q\leq p$) be an submanifold with flat normal bundle and parallel normalized mean curvature vector field. Then, we get
\begin{equation}\label{simons_identity}
\begin{aligned}
\dfrac{1}{2}\Delta S \;=\;& |\nabla B|^2
  + 2\Bigg(
      \sum_{i,j,k,m,\alpha} h_{ij}^{\alpha}h_{km}^{\alpha}\overline{R}_{mijk}
      + \sum_{i,j,k,m,\alpha} h_{ij}^{\alpha}h_{jm}^{\alpha}\overline{R}_{mkik}
    \Bigg) \\[6pt]
&+ \sum_{i,j,k,\alpha,\alpha'} h_{ij}^{\alpha}h_{jk}^{\alpha'}\overline{R}_{\alpha i \alpha' k}
  + \sum_{i,j,k,\alpha,\alpha'} h_{ij}^{\alpha}h_{kk}^{\alpha'}\overline{R}_{\alpha ij \alpha'}
  + \sum_{i,j,k,\alpha,\alpha'} h_{ij}^{\alpha}h_{ij}^{\alpha'}\overline{R}_{\alpha k \alpha' k} \\[6pt]
&+ \sum_{i,j,k,\alpha,\alpha'} h_{ij}^{\alpha}h_{kj}^{\alpha'}\overline{R}_{\alpha ki \alpha'}
  + n\sum_{i,j} h_{ij}^{\,n+p-q+1}H_{ij}
  - nH\sum_{i,j,m,\alpha} h_{ij}^{\alpha}h_{mi}^{\alpha}h_{mj}^{\,n+p-q+1} \\[6pt]
&+ \sum_{\alpha,\alpha'} N(h^{\alpha}h^{\alpha'}-h^{\alpha'}h^{\alpha})
  - \sum_{\alpha,\beta} \big[\operatorname{tr}(h^{\alpha}h^{\beta})\big]^2
  - \dfrac{1}{2}\sum_{\alpha,\beta} N(h^{\alpha}h^{\beta}-h^{\beta}h^{\alpha}) \\[6pt]
&+ \sum_{\alpha,\gamma} \big[\operatorname{tr}(h^{\alpha}h^{\gamma})\big]^2
  + \dfrac{1}{2}\sum_{\alpha,\gamma} N(h^{\alpha}h^{\gamma}-h^{\gamma}h^{\alpha}) \,.
\end{aligned}
\end{equation}
where $N(A)=tr(AA^{t}),$ for all matrix $A=(a_{ij}).$
\end{lemma}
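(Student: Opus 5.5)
The formula follows by collecting terms in the identity \eqref{eq:DeltaS}, which has already been derived in this section. The plan is simply to group and simplify the terms that appear there, in three families: the ambient curvature terms, the terms involving $H$, and the quartic terms in $h$.

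\textbf{Step 1: ambient curvature terms.} In \eqref{eq:DeltaS} the two sums $\sum h_{ij}^{\alpha}h_{km}^{\alpha}\overline{R}_{mijk}$ appear twice: once from expanding $\overline{R}_{\alpha ijkk}$ via the parallelism $\overline{\nabla}\,\overline{R}=0$, and once from the Gauss equation \eqref{EQUACAO DE GAUSS} inside $R_{mijk}$. Likewise, $\sum h_{ij}^{\alpha}h_{jm}^{\alpha}\overline{R}_{mkik}$ (from $\overline{R}_{\alpha kikj}$) and $\sum h_{ij}^{\alpha}h_{mi}^{\alpha}\overline{R}_{mkjk}$ (from Gauss) coincide after relabelling $i\leftrightarrow j$ and using the symmetry $h_{ij}^{\alpha}=h_{ji}^{\alpha}$. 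Together they produce the factor $2(\cdots)$. The four mixed terms $\overline{R}_{\alpha i\alpha'k}$, $\overline{R}_{\alpha ij\alpha'}$, $\overline{R}_{\alpha k\alpha'k}$ and $\overline{R}_{\alpha ki\alpha'}$ are carried over unchanged.

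\textbf{Step 2: terms involving $H$.} The term $n\sum h^{n+p-q+1}_{ij}H_{ij}$ comes from $h^{\alpha}_{kkij}=nH^{\alpha}_{ij}$. Here we use that, since $h/H$ is parallel and $e_{n+p-q+1}=h/H$, only the $\alpha=n+p-q+1$ component of $\nabla^2 H^\alpha$ survives. The cubic term comes from the Gauss equation with $\sum_k h^{\gamma}_{kk}=nH\delta_{\gamma,n+p-q+1}$ and $H^\beta=0$. Because $e_{n+p-q+1}$ is timelike, this term carries a minus sign, which is why it appears as $-nH\sum h^{\alpha}_{ij}h^{\alpha}_{mi}h^{n+p-q+1}_{mj}$ (the sign in \eqref{eq:DeltaS} must be corrected to agree with the preceding displays).

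\textbf{Step 3: quartic terms.} These are the delicate part. I would rewrite each quartic sum as a matrix trace, using the convention $N(A)=\operatorname{tr}(AA^t)$ and the elementary identity
\[
N(h^\alpha h^{\alpha'}-h^{\alpha'}h^\alpha)=2\operatorname{tr}(h^\alpha h^{\alpha'}h^{\alpha'}h^\alpha)-2\operatorname{tr}\big((h^\alpha h^{\alpha'})^2\big),
\]
which holds for symmetric matrices. Applying this identity to each of the three blocks (all $\alpha,\alpha'$; then $\alpha,\beta$; then $\alpha,\gamma$), the differences $\operatorname{tr}(h^\alpha h^{\alpha'}h^{\alpha'}h^\alpha)-\operatorname{tr}(h^\alpha h^{\alpha'})^2$ become $\frac12 N(\cdot)$. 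In the full $\alpha,\alpha'$ block this gives $\frac12 N+\frac12 N=N$, while the $\beta$ and $\gamma$ blocks give $\mp\frac12 N$. The squared-trace terms $[\operatorname{tr}(h^\alpha h^{\alpha'})]^2$ from the Ricci-equation contributions, and the first $\beta$-block term, cancel pairwise. This leaves $-\sum[\operatorname{tr}h^\alpha h^\beta]^2+\sum[\operatorname{tr}h^\alpha h^\gamma]^2$, exactly as in \eqref{simons_identity}.

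\textbf{Main obstacle.} The expected difficulty is the sign bookkeeping in Step 3 between spacelike ($\epsilon_\beta=1$) and timelike ($\epsilon_\gamma=-1$) normal directions. To control it, I would verify each trace identity by writing out the index contractions explicitly. Flat normal bundle means every commutator $h^\alpha h^{\alpha'}-h^{\alpha'}h^\alpha$ vanishes, so one could consistency-check the final formula by setting all $N$-terms to zero. The lemma, however, keeps them in the stated form.
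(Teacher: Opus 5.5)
Your plan is the paper's own: read the lemma off \eqref{eq:DeltaS} by merging the duplicated $\overline{R}_{mijk}$ and $\overline{R}_{mkik}$/$\overline{R}_{mkjk}$ sums into the factor $2(\cdots)$, taking the cubic $H$-term with the minus sign of the earlier displays, and turning $\operatorname{tr}(h^{\alpha}h^{\alpha'}h^{\alpha'}h^{\alpha})-\operatorname{tr}\big((h^{\alpha}h^{\alpha'})^2\big)$ into $\tfrac12 N(h^{\alpha}h^{\alpha'}-h^{\alpha'}h^{\alpha})$ in each of the three blocks. The one slip is your claim that squared traces cancel pairwise: the terms typeset as $(\operatorname{tr}(h^{\alpha}h^{\alpha'}))^2$ and $(\operatorname{tr}(h^{\alpha}h^{\beta}))^2$ in \eqref{eq:DeltaS} are really $\operatorname{tr}\big((h^{\alpha}h^{\alpha'})^2\big)$ and $\operatorname{tr}\big((h^{\alpha}h^{\beta})^2\big)$, already used up in forming the $N$-terms (the Ricci-equation terms give no squared traces at all), so nothing cancels and $-\sum_{\alpha,\beta}[\operatorname{tr}(h^{\alpha}h^{\beta})]^2+\sum_{\alpha,\gamma}[\operatorname{tr}(h^{\alpha}h^{\gamma})]^2$ simply survive; read literally, your cancellation would also fail because $\sum_{\alpha,\alpha'}$ and $\sum_{\alpha,\beta}$ run over different index ranges.
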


Otherwise, we can rewrite the above identity with the following inequality.

\begin{lemma}\label{simons desigualdade}
Let $X:M^{n}\looparrowright \mathbb{L}^{n+p}_{q}$ ($1\leq q\leq p$) be an submanifold with flat normal bundle and parallel normalized mean curvature vector field. Then, we get
\begin{equation}\label{simons_inequality}
\begin{aligned}
\dfrac{1}{2}\Delta S \; \geq \;& |\nabla B|^2
  + 2\Bigg(
      \sum_{i,j,k,m,\alpha} h_{ij}^{\alpha}h_{km}^{\alpha}\overline{R}_{mijk}
      + \sum_{i,j,k,m,\alpha} h_{ij}^{\alpha}h_{jm}^{\alpha}\overline{R}_{mkik}
    \Bigg) \\[6pt]
&+ \sum_{i,j,k,\alpha,\alpha'} h_{ij}^{\alpha}h_{jk}^{\alpha'}\overline{R}_{\alpha i \alpha' k}
  + \sum_{i,j,k,\alpha,\alpha'} h_{ij}^{\alpha}h_{kk}^{\alpha'}\overline{R}_{\alpha ij \alpha'}
  + \sum_{i,j,k,\alpha,\alpha'} h_{ij}^{\alpha}h_{ij}^{\alpha'}\overline{R}_{\alpha k \alpha' k} \\[6pt]
&+ \sum_{i,j,k,\alpha,\alpha'} h_{ij}^{\alpha}h_{kj}^{\alpha'}\overline{R}_{\alpha ki \alpha'}
  + n\sum_{i,j} h_{ij}^{\,n+p-q+1}H_{ij}
  - nH\sum_{i,j,m,\alpha} h_{ij}^{\alpha}h_{mi}^{\alpha}h_{mj}^{\,n+p-q+1} \\[6pt]
& - \sum_{\alpha,\beta} \big[\operatorname{tr}(h^{\alpha}h^{\beta})\big]^2
+ \sum_{\alpha,\gamma} \big[\operatorname{tr}(h^{\alpha}h^{\gamma})\big]^2
  \,.
\end{aligned}
\end{equation}
    
\end{lemma}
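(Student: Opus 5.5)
Lemma~\ref{simons desigualdade} follows directly from the identity \eqref{simons_identity} of Lemma~\ref{formula simons}. Every term there except the three $N(\cdot)$ terms appears unchanged in \eqref{simons_inequality}. So the plan is to show that the remaining normal-commutator contribution
\[
\mathcal{N}:=\sum_{\alpha,\alpha'} N(h^{\alpha}h^{\alpha'}-h^{\alpha'}h^{\alpha})
-\dfrac{1}{2}\sum_{\alpha,\beta} N(h^{\alpha}h^{\beta}-h^{\beta}h^{\alpha})
+\dfrac{1}{2}\sum_{\alpha,\gamma} N(h^{\alpha}h^{\gamma}-h^{\gamma}h^{\alpha})
\]
is nonnegative, or better, that it vanishes.

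The key step uses the flat normal bundle hypothesis. Since $\overline{R}^{\perp}=0$, the Ricci equation \eqref{EQUACAO DE RICCI} gives, for every pair of normal indices,
\[
0=\sum_k\big(h^{\alpha}_{ik}h^{\alpha'}_{kj}-h^{\alpha}_{kj}h^{\alpha'}_{ik}\big),
\]
that is, $h^{\alpha}h^{\alpha'}=h^{\alpha'}h^{\alpha}$ as symmetric matrices. Consequently
\[
N(h^{\alpha}h^{\alpha'}-h^{\alpha'}h^{\alpha})=\operatorname{tr}\big((h^{\alpha}h^{\alpha'}-h^{\alpha'}h^{\alpha})(h^{\alpha}h^{\alpha'}-h^{\alpha'}h^{\alpha})^{t}\big)=0
\]
for all $\alpha,\alpha'$, and in particular for the restricted pairs $(\alpha,\beta)$ and $(\alpha,\gamma)$. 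Hence $\mathcal{N}=0$, and \eqref{simons_identity} collapses to \eqref{simons_inequality}, which then actually holds with equality. That is certainly enough to get ``$\geq$''.

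For robustness, in case one reads the lemma as not using flatness in this step, I would add a fallback that avoids the Ricci equation. The function $N(A)=\operatorname{tr}(AA^{t})\geq 0$ for every real matrix. Since the $\beta$-indices form a subset of the $\alpha'$-indices, we have
\[
\sum_{\alpha,\alpha'}N(\cdot)-\tfrac12\sum_{\alpha,\beta}N(\cdot)\ \geq\ \tfrac12\sum_{\alpha,\alpha'}N(\cdot)\ \geq 0,
\]
and the $\gamma$-term is nonnegative as written. So $\mathcal{N}\geq 0$ in any case, and dropping it gives the inequality.

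The only step requiring care, and the one I expect to be the main obstacle, is bookkeeping rather than ideas. One has to check that the collapsed terms in \eqref{simons_identity} match \eqref{simons_inequality} exactly: the doubled $\overline{R}_{mijk}$ and $\overline{R}_{mkik}$ terms, the sign of the cubic term in $H$, and the $[\operatorname{tr}(h^{\alpha}h^{\beta})]^2$ and $[\operatorname{tr}(h^{\alpha}h^{\gamma})]^2$ terms. With that matching confirmed, nothing else is discarded.
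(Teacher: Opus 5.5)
Your proposal is correct and follows the paper's route: the paper gives no separate proof, but obtains Lemma~\ref{simons desigualdade} from the identity of Lemma~\ref{formula simons} by discarding the three $N(\cdot)$ terms, and your observation that $N(A)=\operatorname{tr}(AA^{t})\geq 0$ together with $\sum_{\alpha,\beta}N(\cdot)\leq\sum_{\alpha,\alpha'}N(\cdot)$ is exactly what justifies that step. Treat the nonnegativity argument as the real proof rather than the fallback, since deducing $h^{\alpha}h^{\alpha'}=h^{\alpha'}h^{\alpha}$ from flatness of the normal bundle also needs $\overline{R}_{\alpha\alpha' ij}=0$; this holds in space forms and is asserted in the paper's parenthetical, but it is not automatic in a general locally symmetric ambient space, and the later results indeed impose simultaneous diagonalizability of the $B_{\varepsilon}$ as a separate hypothesis.
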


\section{Curvature Constraints, Examples, and Key Lemmas}\label{sec:key lemmas}

Proceeding with the context of the previous section and inspired by the configuration assumed by Nishikawa in \cite{Nishikawa: 84}, along this work we will assume that there exist constants $c_1,c_2$ and $c_3$ such that the sectional curvature $\overline{K}$ and the curvature tensor $\overline{R}$ of the ambient space
\begin{eqnarray*}
\mathbb{L}_{q}^{n+p}=\langle e_1,e_2,\cdots, e_n,e_{n+1},\cdots,e_{n+p-q},e_{n+p-q+1},\cdots,e_{n+p}\rangle,
\end{eqnarray*}
satisfies the following constraints:

\begin{equation}\label{c1}
\overline{K}(x_i,x_{\alpha})=\dfrac{c_1}{n},\quad x_i\in \langle e_1,\cdots, e_n\rangle\quad\mbox{and}\quad x_{\alpha}\in \langle e_{n+1},\cdots,e_{n+p}\rangle,
\end{equation}

\begin{equation}\label{c2}
\overline{K}(x_i,x_j)\geq c_2,\quad x_i,x_j\in \langle e_1,\cdots,e_n\rangle,
\end{equation}

\begin{equation}\label{R barra}
\langle \overline{R}(x_{\alpha},x_i)x_{\alpha'},x_i\rangle=0,\quad x_i\in \langle e_1,\cdots, e_n\rangle\quad\mbox{and}\quad x_{\alpha},x_{\alpha'}\in \langle e_{n+1},\cdots,e_{n+p}\rangle,
\end{equation}

\begin{equation}\label{c3}
\overline{K}(x_{\alpha},x_{\alpha'})=\dfrac{c_3}{p},\quad x_{\alpha},x_{\alpha'}\in \langle e_{n+1},\cdots,e_{n+p}\rangle.
\end{equation}
If $\mathbb{L}_{q}^{n+p}$ satisfies conditions above, then
\begin{eqnarray*}
\overline{R}=\sum_{i,j,i,j}\overline{R}_{ijij}+D(c_1,c_3),
\end{eqnarray*}
where $D(c_1,c_3)$ is a constant.
Then, it is well known that the scalar curvature of a locally symmetric Lorentz
space is constant. Consequently, $\displaystyle\sum_{i,j}\overline{R}_{ijij}$ is a constant naturally attached
to a locally symmetric Lorentz space satisfying conditions \eqref{c1} and \eqref{c3}.
For sake of simplicity, in the course of this work we will denote the constant $\dfrac{1}{n(n-1)}\displaystyle\sum_{i,j}\overline{R}_{ijij}$ by $\overline{\mathcal{R}}$. In order to establish our main results, we devote this
section to present some auxiliary lemmas. 

\begin{remark}
The curvature conditions \eqref{c1} and \eqref{c3}, are natural extensions for higher codimensions of conditions assumed by Nishikawa \cite{Nishikawa: 84} in context of hypersurfaces. When the ambient manifold $\mathbb{L}_{q}^{n+p}$ has constant sectional curvature $c$, then it satisfies conditions \eqref{c1}, \eqref{c2}, \eqref{R barra} and \eqref{c3}. On the other hand, the next example gives us a situation where the curvature conditions \eqref{c1}, \eqref{c2} and \eqref{c3} are satisfied but the ambient space has not constant sectional curvature.
\end{remark}

\begin{example}
Let $\mathbb{L}_{q}^{n+p}=\mathbb{R}_{q}^{p}\times\mathbb{S}^{n}$ be a semi-Riemannian space, where $\mathbb{R}_{q}^{p}$ stands for the $p$-dimensional semi-Euclidean space of index $q$ and $\mathbb{S}^{n}$ is the $n$-dimensional unit Euclidean sphere. We consider the spacelike submanifold $M^{n}=\{0\}\times\mathbb{S}^{n}$ of $\mathbb{L}_{q}^{n+p}$. Taking into account that the normal bundle of $M^{n}$ is equipped with $p$ linearly independent timelike vector fields $\xi^{1},\xi^{2},\ldots,\xi^{p}$, it is not difficult to verify that the sectional curvature $\overline{K}$ of $\mathbb{L}_{q}^{n+p}$ satisfies
\begin{eqnarray}\label{exemplo1}
\overline{K}\left(\xi_{i},X\right)&=&\langle R_{\mathbb{R}_{q}^{p}}(\xi^{i},X)\xi^{i},X\rangle_{\mathbb{R}_{q}^{p}}+\langle R_{\mathbb{S}^{n}}(0,U)0,U\rangle_{\mathbb{S}^{n}}=0,
\end{eqnarray}
for each $i\in\{1,\ldots,p\}$, where $R_{\mathbb{R}_{q}^{p}}$ and $R_{\mathbb{S}^{n}}$ denote the curvature tensors of $\mathbb{R}_{q}^{p}$ and $\mathbb{S}^{n}$, respectively, $\xi_{i}=(\xi^{i},0)\in T^{\perp}M$ and $X=(0,X_{2})\in TM$ with $\langle \xi_{i},\xi_{i}\rangle=\langle X,X\rangle=1$.
On the other hand, by a direct computation we obtain
\begin{equation}\label{eqaux:1}
\overline{K}(X,Y)=\langle R_{\mathbb{R}_{q}^{p}}(0,0)0,0\rangle_{\mathbb{R}_{q}^{p}}+\langle R_{\mathbb{S}^{n}}(X_{2},Y_{2})X_{2},Y_{2}\rangle_{\mathbb{S}^{n}}
\end{equation}
for every $X=(0,X_{2}), Y=(0,Y_{2})\in TM$ such that $\langle X,Y\rangle=0,\langle X,X\rangle=\langle Y,Y\rangle=1$. Consequently, since
\begin{eqnarray*}
\langle X_{2},Y_{2}\rangle=0,\langle X_{2},X_{2}\rangle=\langle Y_{2},Y_{2}\rangle=1,
\end{eqnarray*}
from~\eqref{eqaux:1} we get
\begin{eqnarray}\label{exemplo2}
\overline{K}(X,Y)=|X_{2}|^{2}|Y_{2}|^{2}-\langle X_{2},Y_{2}\rangle^{2}=1.
\end{eqnarray}
Moreover, we have that
\begin{eqnarray}\label{exemplo3}
\overline{K}(\xi_{i},\xi_{j})=0,\quad\mbox{for all}\quad i,j\in \{1,\ldots,p\}
\end{eqnarray}
and
\begin{eqnarray}\label{exemplo4}
\langle \overline{R}(\xi_{i}, X)\xi_{j},X\rangle=0,\quad\mbox{for all}\quad i,j\in \{1,\ldots,p\}.
\end{eqnarray}
We observe from ~\eqref{exemplo1},~\eqref{exemplo2}, ~\eqref{exemplo3} and ~\eqref{exemplo4} that the curvature constraints ~\eqref{c1}-\eqref{c3} are satisfied with $c_{1}=c_{3}=0$, $c_{2}=1$ and $c=\dfrac{c_{1}}{n}+2c_{2}=2$. Furthermore, we also note that the ambient space $\mathbb{L}_{q}^{n+p}=\mathbb{R}_{q}^{p}\times\mathbb{S}^{n}$ is conformally flat (see, for instance, Chapter $7$ of~\cite{Dajczer:90}).
\end{example}

In order to establish our main results, we also devote this section to presenting a collection of auxiliary lemmas. In particular, the next result extends Lemma 2 of Gomes N.J. et al in \cite{nazareno} and Lemma 1 of \cite{Araujo:17} in the special case $q=p$.

\begin{lemma}\label{desigualdade_BeH}
Let $X:M^n\looparrowright \mathbb{L}^{n+p}_{q}$ be a complete spacelike LW-submanifold satisfying conditions \eqref{c1}-\eqref{c3} and \eqref{equacaoRS}. If the second fundamental form of $M^n$ is locally timelike and
\begin{equation}\label{H1}
(n-1)a^2+4n(\overline{\mathcal{R}}-b)\geq 0.
\end{equation}
Then, 
\begin{eqnarray}\label{desigualdade}
|\nabla B|^2 \geq n^2|\nabla H|^2.
\end{eqnarray}
Moreover, is the equality holds in ~\eqref{desigualdade} on $M^{n}$, then $H$ is constant on $M^{n}.$
\end{lemma}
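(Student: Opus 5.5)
We follow the standard Cheng--Yau / Li argument for linear Weingarten hypersurfaces, adapted to the present setting. The plan is to differentiate the linear Weingarten relation \eqref{lw_relation} combined with \eqref{equacaoRS}, and then to estimate $|\nabla S|^2$ from above by $|\nabla B|^2$ via the Cauchy--Schwarz inequality.

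\textbf{Step 1: an expression for $S$.} Since the second fundamental form is locally timelike, we have $H^\beta=0$. With the frame $e_{n+p-q+1}=h/H$, the parallel normalized mean curvature vector gives $H^\alpha=0$ for $\alpha\ne n+p-q+1$, so $H$ is the only nonzero mean curvature component. From \eqref{equacaoRS} together with $R=aH+b$, and using that $\overline{\mathcal{R}}$ is constant (by the constraints \eqref{c1}--\eqref{c3} and local symmetry), we read off an identity of the form
\[
S = \pm\big(n^2H^2 + n(n-1)aH + n(n-1)(b-\overline{\mathcal{R}})\big)
\]
up to the signature conventions in \eqref{equacaoRS}. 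The aim is to reduce to the hypersurface-type relation $\sum_{i,j}(h^{n+p-q+1}_{ij})^2 = n^2H^2+n(n-1)aH+\text{const}$. I would isolate the component $h^{n+p-q+1}$ and work with $S$ restricted to that direction; this restriction is still bounded above by $|\nabla B|^2$.

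\textbf{Step 2: the key inequality.} Differentiating the identity gives
\[
2\sum_{i,j} h_{ij}h_{ijk}=\big(2n^2H+n(n-1)a\big)H_k .
\]
Squaring and summing over $k$, and then applying Cauchy--Schwarz, yields
\[
\tfrac14\big(2n^2H+n(n-1)a\big)^2|\nabla H|^2\le S\,|\nabla B|^2 .
\]

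\textbf{Step 3: comparing the coefficients.} It remains to show $\tfrac14(2n^2H+n(n-1)a)^2\ge n^2 S$, that is,
\[
n^4H^2+n^3(n-1)aH+\tfrac14 n^2(n-1)^2a^2 \ge n^2\big(n^2H^2+n(n-1)aH+n(n-1)(b-\overline{\mathcal{R}})\big).
\]
This inequality reduces exactly to $(n-1)a^2+4n(\overline{\mathcal{R}}-b)\ge0$, which is hypothesis \eqref{H1}. Combining Steps 2 and 3 gives $n^2 S|\nabla H|^2\le S|\nabla B|^2$. Where $S>0$ this yields \eqref{desigualdade}. Where $S=0$ the submanifold is totally geodesic at that point, and there $H=0$; a neighborhood/density argument, or the observation that $\nabla H=0$ on the interior of $\{S=0\}$, handles this case.

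\textbf{Equality case.} If equality holds in \eqref{desigualdade} everywhere, then equality holds both in Cauchy--Schwarz and in the coefficient comparison. If \eqref{H1} holds strictly, then $\nabla H=0$ immediately. In the borderline case, equality in Cauchy--Schwarz forces $h_{ijk}=\lambda_k h_{ij}$. Combined with Codazzi (the ambient terms vanish by \eqref{R barra} and local symmetry), this gives $h_{ijk}$ proportional to $H_k\delta_{ij}$-type terms. The argument of Cheng--Yau / Li then shows that $H$ is constant.

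\textbf{Main obstacle.} The hardest step is Step 1: handling the signs coming from the timelike components $S_2$ versus the spacelike components $S_1$ in \eqref{equacaoRS}, so that the quantity being differentiated is genuinely bounded by $|\nabla B|^2$. The equality analysis in the borderline case is the second delicate point.
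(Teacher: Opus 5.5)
The skeleton (differentiate the Weingarten relation, apply Cauchy--Schwarz, compare coefficients via \eqref{H1}) is the paper's. However, Step 1 is wrong as stated, and the error carries into Step 3. With $R=aH+b$ and $H^\beta=0$, \eqref{equacaoRS} gives exactly
\[
S_2-S_1=n^2H^2+n(n-1)aH+n(n-1)(b-\overline{\mathcal{R}}).
\]
This is not a formula for $S=S_1+S_2$ ``up to sign''. Nor is it a hypersurface-type relation for $\sum_{i,j}(h^{n+p-q+1}_{ij})^2$: the traceless components $h^\gamma$ ($\gamma\neq n+p-q+1$) and $h^\beta$ enter with opposite signs, so isolating $h^{n+p-q+1}$ leaves nothing to differentiate. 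The correct move, as in \eqref{L0}--\eqref{L1}, is to differentiate $S_2-S_1$ itself. Cauchy--Schwarz applied to $\sum_{i,j,\gamma}h^\gamma_{ij}h^\gamma_{ijk}-\sum_{i,j,\beta}h^\beta_{ij}h^\beta_{ijk}$ still yields $\tfrac14(2n^2H+n(n-1)a)^2|\nabla H|^2\le S|\nabla B|^2$, so the sign issue you call the main obstacle is harmless in Step 2. It bites in Step 3, where the true identity is
\[
\tfrac14\left(2n^2H+n(n-1)a\right)^2=n^2(S_2-S_1)+\tfrac{n^2(n-1)}{4}\left[(n-1)a^2+4n(\overline{\mathcal{R}}-b)\right].
\]
This dominates $n^2S$ only if $8S_1\le(n-1)[(n-1)a^2+4n(\overline{\mathcal{R}}-b)]$, for instance when $S_1=0$ (as for $q=p$). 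Your Step 3 uses $S$ where $S_2-S_1$ belongs. The paper's \eqref{L2} makes the same substitution, so following the paper does not repair this: as written, neither argument covers points with $S_1>0$.

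The equality case is also not established. The strict case of \eqref{H1} is fine. In the borderline case, ``the argument of Cheng--Yau/Li'' is not an argument, and for $a=0$ (hence $b=\overline{\mathcal{R}}$) no argument can exist. In $\mathbb{L}^{n+1}_1$, take $M=\Gamma\times\mathbb{R}^{n-1}$, with $\Gamma$ a complete convex spacelike curve in $\mathbb{L}^2_1$ of nonconstant curvature $\kappa$ (e.g.\ the graph $t=\tfrac12\sqrt{1+x^2}$). Then $M$ is flat, so $R=0=0\cdot H+0$ with $\overline{\mathcal{R}}=0$, and \eqref{c1}--\eqref{c3} hold with $c_1=c_2=c_3=0$. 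In a parallel frame the only nonzero $h_{ijk}$ is $h_{111}=\kappa'$, so $|\nabla B|^2=\kappa'^2=n^2|\nabla H|^2$ everywhere, while $H=\kappa/n$ is not constant.

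For $a\neq0$ (and $S_1=0$) the paper's route works and needs no Codazzi input:
\begin{enumerate}
\item Equality in Cauchy--Schwarz gives $h^\alpha_{ijk}=c_kh^\alpha_{ij}$, and tracing gives $H_k=c_kH$, hence $HS_k=2SH_k$.
\item Together with $S_k^2=4n^2SH_k^2$ (from the borderline identity), this gives $|\nabla H|^2(S-n^2H^2)=0$.
\item On the open set $\{\nabla H\neq0\}$, the identity $(2n^2H+n(n-1)a)^2=4n^2S=4n^4H^2$ forces $a(4nH+(n-1)a)=0$. So $H$ is locally constant there, a contradiction.
\end{enumerate}
The paper's closing step ``$S=n^2H^2$, hence $H$ constant'' has the same blind spot at $a=0$.
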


\begin{proof} 

Since we are supposing that $R=aH+b$ and $\mathbb{L}_{q}^{n+p}$ satisfies the conditions $\eqref{c1}$-$\eqref{c3}$ and \eqref{equacaoRS}, we have
\begin{eqnarray*}
S_1-n^2\sum_{\beta}(H^{\beta})^2+n(n-1)R=\sum_{i,j}\overline{R}_{ijij}-n^2\sum_{\gamma}(H^{\gamma})^2+S_2.
\end{eqnarray*}
On the other hand, as the second fundamental form of $M^n$ is locally timelike, we have that $H^{\beta}=0,$ for $n+1\leq \beta\leq n+p-q$, that is, 
$$H^2=\sum_{\alpha}(H^{\alpha})^2=\sum_{\gamma}(H^{\gamma})^2.$$ 
Moreover, we have
\begin{eqnarray*}
S_1+n(n-1)R=\sum_{i,j}\overline{R}_{ijij}-n^2H^2+S_2,
\end{eqnarray*}
using the fact that $R=aH+b$, we have
$$
S_1+n(n-1)(aH+b)=\sum_{i,j}\overline{R}_{ijij}-n^2H^2+S_2,
$$
organizing the terms, we obtain
$$
S_2-S_1=n(n-1)(aH+b)-\sum_{i,j}\overline{R}_{ijij}+n^2H^2,
$$
or yet,
$$
\sum_{i,j,\gamma}(h_{ij}^{\gamma})^2-\sum_{i,j,\beta}(h_{ij}^{\beta})^2=n(n-1)aH+n(n-1)b-\sum_{i,j}\overline{R}_{ijij}+n^2H^2,
$$
thus, we get
\begin{equation}\label{L0}
2\left(\sum_{i,j,\gamma}h_{ij}^{\gamma}h_{ijk}^{\gamma}-\sum_{i,j,\beta}h_{ij}^{\beta}h_{ijk}^{\beta}\right)=(2n^2H+n(n-1)a)\cdot H_k,
\end{equation}
where $H_k$ stands for the $k$-th component of $\nabla H.$ With a quick calculation we get
\begin{eqnarray*}
4\sum_{k}\left(\sum_{i,j,\gamma}h_{ij}^{\gamma}h_{ijk}^{\gamma}-\sum_{i,j,\beta}h_{ij}^{\beta}h_{ijk}^{\beta}\right)^2=(2n^2H+n(n-1)a)^2\cdot |\nabla H|^2.
\end{eqnarray*}
Now note that 

\begin{eqnarray*}
4S|\nabla B|^2&=&4\sum_{i,j,\alpha}(h_{ij}^{\alpha})^2\sum_{i,j,k,\alpha}(h_{ijk}^{\alpha})^2=4\left(\sum_{i,j,\beta}(h_{ij}^{\beta})^2+\sum_{i,j,\gamma}(h_{ij}^{\gamma})^2\right)\cdot \left(\sum_{i,j,k,\beta}(h_{ijk}^{\beta})^2+\sum_{i,j,k,\gamma}(h_{ijk}^{\gamma})^2\right)\\
&=&4\sum_{i,j,\beta}(h_{ij}^{\beta})^2\cdot \sum_{i,j,k,\beta}(h_{ijk}^{\beta})^2+4\sum_{i,j,\beta}(h_{ij}^{\beta})^2\cdot \sum_{i,j,k,\gamma}(h_{ijk}^{\gamma})^2\\
&+&4\sum_{i,j,\gamma}(h_{ij}^{\gamma})^2\cdot \sum_{i,j,k,\beta}(h_{ijk}^{\beta})^2+4\sum_{i,j,\gamma}(h_{ij}^{\gamma})^2\cdot \sum_{i,j,k,\gamma}(h_{ijk}^{\gamma})^2,
\end{eqnarray*}
consequently, using Cauchy-Schwarz inequality, we obtain that
\begin{eqnarray*}
4S|\nabla B|^2&\geq& 4\sum_k\left(\sum_{i,j,\beta}h_{ij}^{\beta}h_{ijk}^{\beta}\right)^2+4\sum_{k}\left(\sum_{i,j,\beta,\gamma}h_{ij}^{\beta}h_{ijk}^{\gamma}\right)^2\\
&&+4\sum_k\left(\sum_{i,j,\beta,\gamma}h_{ij}^{\gamma}h_{ijk}^{\beta}\right)^2+4\sum_k\left(\sum_{i,j,\gamma}h_{ij}^{\gamma}h_{ijk}^{\gamma}\right)^2\\
&=&4\sum_k\left[\left(\sum_{i,j,\gamma}h_{ij}^{\gamma}h_{ijk}^{\gamma}-\sum_{i,j,\beta}h_{ij}^{\beta}h_{ijk}^{\beta}\right)^2+2\sum_{i,j,\beta,\gamma}h_{ij}^{\gamma}h_{ijk}^{\gamma}h_{ij}^{\beta}h_{ijk}^{\beta}\right]\\
&&+4\sum_k\left[\left(\sum_{i,j,\beta,\gamma}h_{ij}^{\beta}h_{ijk}^{\gamma}\right)^2+\left(\sum_{i,j,\beta,\gamma}h_{ij}^{\gamma}h_{ijk}^{\beta}\right)^2\right]\\
&=&4\sum_k\left[\left(\sum_{i,j,\gamma}h_{ij}^{\gamma}h_{ijk}^{\gamma}-\sum_{i,j,\beta}h_{ij}^{\beta}h_{ijk}^{\beta}\right)^2\right]\\
&&+4\sum_k\left[\left(\sum_{i,j,\beta,\gamma}h_{ij}^{\beta}h_{ijk}^{\gamma}+\sum_{i,j,\beta,\gamma}h_{ij}^{\gamma}h_{ijk}^{\beta}\right)^2\right].
\end{eqnarray*}
Therefore,
\begin{equation}\label{L1}
4S|\nabla B|^2\geq 4\sum_k\left(\sum_{i,j,\gamma}h_{ij}^{\gamma}h_{ijk}^{\gamma}-\sum_{i,j,\beta}h_{ij}^{\beta}h_{ijk}^{\beta}\right)^2=(2n^2H+n(n-1)a)^2 |\nabla H|^2.
\end{equation}
On the other hand, since $R=aH+ b$, from equation \eqref{equacaoRS} we easily see that
\begin{equation}\label{L2}
(2n^2H + n(n-1)a)^2 = n^2(n-1) [ (n-1)a^2 + 4n(\overline{\mathcal{R}}-b) ] + 4n^2 S,
\end{equation}
from \eqref{L1} and \eqref{L2} and taking account your assumption \eqref{H1}, we obtain
\begin{equation}
S|\nabla B|^2 \geq S n^2|\nabla H|^2.
\end{equation}
In this case, either $S=0$ and $|\nabla B|^2 = n^2|\nabla H|^2 = 0$ or $|\nabla B|^2 \geq n^2|\nabla H|^2.$ Now, suppose that $|\nabla B|^2 = n^2 |\nabla H|^2$. If $(n-1)a^2 + 4n(\overline{\mathcal{R}} - b) > 0$, from \eqref{L1} and \eqref{L2} we have
\begin{equation}
4S|\nabla B|^2 \geq n^2(n-1)[ (n-1)a^2 + 4n(\overline{\mathcal{R}} - b) ] + 4n^2S|\nabla H|^2 > 4n^2S|\nabla H|^2,
\end{equation}
and thus, we obtain that $H$ is constant. If $(n-1)a^2 + 4n(\overline{\mathcal{R}} - b) = 0$, then from \eqref{L2}
\begin{equation}\label{L3}
(2n^2H + n(n-1)a)^2 - 4n^2S = 0,
\end{equation}
this together with \eqref{L0} forces that
\begin{equation}\label{L4}
S^2_k = 4n^2SH_k^2, \quad k = 1, \cdots, n,
\end{equation}
where $S_k$ stands for the $k$-th component of $\nabla S$. Since the equality in \eqref{L1} holds, there exists a real function $c_k$ on $M^n$ such that 
\begin{equation}\label{L4}
h_{ijk}^{n+p-q+1} = c_k h_{ij}^{n+p-q+1} \quad \text{and} \quad h_{ijk}^{\alpha} = c_k h_{ij}^\alpha, \quad \alpha > n+p-q+1, \quad  i,j,k=1, \cdots, n.
\end{equation}
Taking the sun on both sides of equation \eqref{L4} with respect to $i=j$, we get
\begin{equation}\label{L5}
H_k = c_k H, \quad H^\alpha_k = 0, \quad \alpha > n+p-q+1; \quad k=1, \cdots, n.
\end{equation}
From second equation of \eqref{L5} we can see that $e_{n+p-q+1}$ is parallel. It follows from \eqref{L4} that
\begin{equation}\label{L6}
S_k = 2\left(\sum_{i,j,\gamma}h_{ij}^{\gamma}h_{ijk}^{\gamma}-\sum_{i,j,\beta}h_{ij}^{\beta}h_{ijk}^{\beta}\right) = 2c_kS, \quad k = 1, \cdots, n,
\end{equation}
multiplying both sides of equation \eqref{L6} by $H$ and using \eqref{L5} we have
\begin{equation}\label{L7}
HS_k = 2H_kS, \quad k=1, \cdots, n,    
\end{equation}
it follows from \eqref{L4} and \eqref{L7} that
\begin{equation}
H^2_kS = H^2_k n^2H^2, \cdots, k=1,\cdots, n.
\end{equation}
Therefore we conclude that
\begin{equation}\label{L8}
|\nabla H|^2 (S - n^2H^2) = 0.    
\end{equation}
We suppose that $H$ is not constant on $M^n$. In this case, $|\nabla H|$ is not vanishing identically on $M^n$. Let us denote
$$
M_0 = \{ x \in M ~ ; ~ |\nabla H|> 0 \} \quad \text{and} \quad T = S - n^2H^2,
$$
it follows from \eqref{L8} that $M_0$ is open in $M$ and $T=0$ over $M_0$. From the continuity of $T$, we have that $T=0$ on the clousure $cl(M_0)$ of $M_0$. If $M - cl(M_0) \neq \emptyset$, then $H$ is constant in $M - cl(M_0)$. It follows that $S$ is constant and thus $T$ is constant on $M-cl(M_0)$. From the continuity of $T$, we have that $T=0$ and hence $S=n^2H^2$ on $M^n$. It follows that $H$ is constant on $M^n$, which contradicts the assumption. Therefore, this complete the proof.

\end{proof}

For what follows, we choose the normal vector field $e_{n+p-q+1}$ to be aligned with the mean curvature vector field, that is,
$$
e_{n+p-q+1} = \frac{h}{H}.
$$
Within this framework, we introduce the following symmetric tensor:
$$
\Phi = \sum_{i,j,\alpha} \Phi_{ij}^{\alpha} \, \omega_i \otimes \omega_j \, e_{\alpha},
$$
where the components are given by
$$
\Phi_{ij}^{n+p-q+1} = h_{ij}^{n+p-q+1} - H \delta_{ij}
\quad \text{and} \quad
\Phi_{ij}^{\alpha} = h_{ij}^{\alpha}, \quad n+2 \leq \alpha \leq n+p.
$$
The squared length of the tensor $\Phi$ is defined by
$$
|\Phi|^2 = \sum_{i,j,\alpha} \left( \Phi_{ij}^{\alpha} \right)^2.
$$
Moreover, one easily verifies that
$$
|\Phi|^2 = S - nH^2 = S_1 + S_2 - nH^2,
$$
where
$$
S_1 = \sum_{i,j,\beta} \left( h_{ij}^{\beta} \right)^2
\quad \text{and} \quad
S_2 = \sum_{i,j,\gamma} \left( h_{ij}^{\gamma} \right)^2.
$$
In addition, we have the following relation:
$$
n(n-1)(aH + b)
= \sum_{i,j} \overline{R}_{ijij}
- n^2 \left( \sum_{\gamma} (H^{\gamma})^2 - \sum_{\beta} (H^{\beta})^2 \right)
+ (S_2 - S_1).
$$
Since $\sum_{\beta} (H^{\beta})^2 = 0$, it follows that
\begin{equation}\label{eq:main_relation}
n(n-1)aH + n(n-1)b
= \sum_{i,j} \overline{R}_{ijij}
- n^2 H^2 + (S_2 - S_1).
\end{equation}

In order to study LW-submanifolds, we will consider, for
each $a\in \mathbb{R}$, an appropriated Cheng-Yau’s modified operator, which is given
by
\begin{equation}\label{operadorL}
\mathcal{L} :=\square+\dfrac{n-1}{2}a\Delta,
\end{equation}
where, acoording to ~\cite{Cheng-Yau:77}, the square operator is defined by

\begin{equation}
\square f=\sum_{i,j}(nH\delta_{ij}-h_{ij}^{n+p-q+1})f_{ij},
\end{equation}
for each $f\in C^{\infty}(M),$ and the normal vector field $e_{n+p-q+1}$ is taken in the direction of the mean curvature vector field, that is, $e_{n+p-q+1}=\dfrac{h}{H}.$

The next lemma gives sufficient conditions to guarantee the elipticity of the operator $\mathcal{L}$, and it is an extension of Lemma 3 of Gomes et al in \cite{nazareno} and Lemma $3.2$ of de Lima in \cite{Lima: 13B}.

\begin{lemma}\label{L eliptico}
Let $X:M^n\looparrowright \mathbb{L}^{n+p}_{q}$ be a complete spacelike LW-submanifold with parallel normalized mean curvature vector field and flat normal bundle satisfying conditions \eqref{c1}-\eqref{c3}. If $b<\overline{\mathcal{R}}$, then $\mathcal{L}$ is elliptic.
\end{lemma}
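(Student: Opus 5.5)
The plan is to show that the symmetric matrix of coefficients of $\mathcal{L}$ is positive definite at every point. In the frame diagonalizing $h^{n+p-q+1}$, write $h_{ij}^{n+p-q+1}=\lambda_i\delta_{ij}$. Then, for any $f$,
\begin{equation*}
\mathcal{L}f=\sum_{i}\Big(nH-\lambda_i+\frac{n-1}{2}a\Big)f_{ii},
\end{equation*}
so ellipticity amounts to proving $nH-\lambda_i+\frac{n-1}{2}a>0$ for every $i$.

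\textbf{Step 1: a sign condition when $a=0$.} If $a=0$, relation \eqref{eq:main_relation} gives
\begin{equation*}
n^2H^2=n(n-1)(\overline{\mathcal{R}}-b)+(S_2-S_1).
\end{equation*}
I will use this together with $H>0$, which holds because the normalized mean curvature vector is parallel.

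\textbf{Step 2: the key identity for $\lambda_i^2$.} From \eqref{eq:main_relation}, in the form used to derive \eqref{L2},
\begin{equation*}
\Big(nH+\frac{n-1}{2}a\Big)^2=\frac{(n-1)^2a^2}{4}+n(n-1)(\overline{\mathcal{R}}-b)+(S_2-S_1).
\end{equation*}
The main difficulty is the sign of $S_2-S_1$, since the $\beta$-directions contribute negatively. I take the ``locally timelike'' hypothesis of Lemma \ref{desigualdade_BeH} to mean that the relevant second fundamental form is concentrated in the $\gamma$-directions, as in \eqref{equacaoRS}. If the $\beta$-components of $h$ vanish, as the locally timelike framework implicitly assumes, then $S_1=0$ and
\begin{equation*}
S_2\ge\sum_{j}\lambda_j^2\ge\lambda_i^2.
\end{equation*}
If that reduction is not available, I would instead appeal to the same convention the paper uses, $S_2-S_1\geq \sum_j\lambda_j^2$.

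\textbf{Step 3: conclusion.} Since $b<\overline{\mathcal{R}}$, Step 2 yields the strict inequality
\begin{equation*}
\Big(nH+\frac{n-1}{2}a\Big)^2>\lambda_i^2 .
\end{equation*}
Hence $\big|nH+\frac{n-1}{2}a\big|>|\lambda_i|$, provided $nH+\frac{n-1}{2}a\ge0$. To get that sign, I would argue as in \cite{nazareno} and \cite{Lima: 13B}. By the identity in Step 2, the function $nH+\frac{n-1}{2}a$ never vanishes, so by continuity on the connected $M^n$ it has constant sign. At a point where $nH$ is large the sign is positive; alternatively, when $a\ge0$ the sign follows directly from $H>0$. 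I expect the argument for $a<0$ to be the main obstacle. There I would use $H>0$ together with the fact that the square root is strictly larger than $\frac{(n-1)|a|}{2}$, which forces $nH+\frac{n-1}{2}a>0$.

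With the sign in hand, $nH+\frac{n-1}{2}a-\lambda_i>0$ for every $i$, so $\mathcal{L}$ is elliptic.
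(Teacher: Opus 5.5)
Your proof is correct, under the same tacit hypothesis the paper uses (see below). For $a\neq 0$ it follows a different algebraic route. The paper solves the Gauss relation for $a$ and multiplies the coefficient by $2nH>0$, which gives
\[
2nH\Big(nH-\lambda_i+\frac{n-1}{2}a\Big)=(nH-\lambda_i)^2+\big(S_2-S_1-\lambda_i^2\big)+n(n-1)(\overline{\mathcal R}-b).
\]
(The paper writes $S$ for $S_2-S_1$ here.) Positivity is then immediate, with no need to discuss the sign of $nH+\frac{n-1}{2}a$.

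You instead complete the square. This gives one identity covering $a=0$ and $a\neq0$ at once; it is the identity the paper later uses in the Omori--Yau lemma. The price is that you must prove $nH+\frac{n-1}{2}a>0$ separately. Your last argument for this is the right one, and it is purely pointwise. The identity gives $|nH+\frac{n-1}{2}a|>\frac{n-1}{2}|a|$, and the negative alternative would force $nH<-\frac{n-1}{2}(a+|a|)\le 0$, contradicting $H\ge 0$. So drop the detour through continuity, connectedness and ``a point where $nH$ is large''; it is unnecessary, and the existence of such a point is not justified.

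Your route yields the two-sided bound $|\lambda_i|<nH+\frac{n-1}{2}a$ in one stroke. The paper's route gives a shorter positivity check with no sign analysis.

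One assumption should be stated explicitly rather than presented as an interpretation. Both your Step 2 and the paper's proof need
\[
S_2-S_1\ge\sum_j\lambda_j^2,\quad\text{i.e.}\quad S_1\le\sum_{\gamma\neq n+p-q+1}\sum_{i,j}(h^{\gamma}_{ij})^2,
\]
for instance $S_1=0$. The paper's $a=0$ case needs it too, to get $\sum_{i<j}\lambda_i\lambda_j>0$. This does not follow from the lemma's hypotheses. Nor does it follow from ``locally timelike'', which in the paper means only $H^\beta=0$, not $h^\beta=0$. The paper hides it by writing $S$ where the Gauss relation gives $S_2-S_1$. Relative to the paper this is not a defect of your argument, but you should list it among the hypotheses.
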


\begin{proof}
Let us consider two cases: 

\noindent$(i)$ Case $a=0$. Since $R=b<\overline{\mathcal{R}}$, from Equation ~\eqref{equacaoRS} if we choose a (local) orthonormal frame $\{e_{i}\}$ on $M^{n}$ such that $h_{ij}^{n+p-q+1}=\lambda_{i}\delta_{ij}$, we have that $\sum_{i<j}\lambda_{i}\lambda_{j}>0$. Consequently,
\begin{eqnarray}
n^{2}H^{2}=\sum_{i}\lambda_{i}^{2}+2\sum_{i<j}\lambda_{i}\lambda_{j}>\lambda_{i}^{2}
\end{eqnarray}
for every $i=1,\ldots,n$ and, hence, we have that $nH-|\lambda_{i}|>0$ for every $i$.
Therefore, in this case, we conclude that $\mathcal{L}$ is elliptic.

\noindent $(ii)$ Case $a\neq 0$. From Equation ~\eqref{equacaoRS} we get that
\begin{eqnarray}\label{a}
a=\frac{1}{n(n-1)H}\left(S-n^{2}H^{2}+n(n-1)\overline{\mathcal{R}}-n(n-1)b\right).
\end{eqnarray}
For any $i$, from ~\eqref{a} we have
$$
nH-\lambda_{i}^{n+p-q+1}+\frac{n-1}{2}a=nH-\lambda_{i}^{n+p-q+1}+\frac{1}{2nH}\left(S-n^{2}H^{2}+n(n-1)(\overline{\mathcal{R}}-b)\right)
$$
\begin{equation}\label{**}
=\left(\frac{1}{2}(nH)^{2}-nH\lambda_{i}^{n+p-q+1}+\frac{1}{2}S+\frac{1}{2}n(n-1)(\overline{\mathcal{R}}-b)\right)(nH)^{-1}.
\end{equation}
Since $\displaystyle\sum_{j}\lambda_{j}^{n+p-q+1}=nH$ and $S\geq \displaystyle\sum_{j}(\lambda_{j}^{n+p-q+1})^{2}$, from ~\eqref{**} we have
\begin{eqnarray}
nH-\lambda_{i}^{n+p-q+1}+\frac{n-1}{2}a\!\!\!&\geq&\!\!\! \left\{\frac{1}{2}\left(\sum_{j}\lambda_{j}^{n+p-q+1}\right)^{2}-\lambda_{i}^{n+p-q+1}\sum_{j}\lambda_{j}^{n+p-q+1}+\frac{1}{2}\sum_{j}(\lambda_{j}^{n+p-q+1})^{2}\right\}(nH)^{-1}\nonumber\\
&&+\frac{1}{2}n(n-1)(\overline{\mathcal{R}}-b)(nH)^{-1}\nonumber\\
&=&\!\!\!\left\{\sum_{j}(\lambda_{j}^{n+p-q+1})^{2}+\frac{1}{2}\sum_{l\neq j}\lambda_{l}^{n+p-q+1}\lambda_{j}^{n+p-q+1}-\lambda_{i}^{n+p-q+1}\sum_{j}\lambda_{j}^{n+p-q+1}\right\}(nH)^{-1}\\
&&+\frac{1}{2}n(n-1)(\overline{\mathcal{R}}-b)(nH)^{-1}\nonumber\\
&=&\!\!\!\left\{\sum_{i\neq j}(\lambda_{j}^{n+p-q+1})^{2}+\frac{1}{2}\sum_{l\neq j, l,j\neq i}\lambda_{l}^{n+p-q+1}\lambda_{j}^{n+p-q+1}+\frac{1}{2}n(n-1)(\overline{\mathcal{R}}-b)\right\}(nH)^{-1}\nonumber\\
&=&\!\!\!\frac{1}{2}\left\{\sum_{i\neq j}(\lambda_{j}^{n+p-q+1})^{2}+\left(\sum_{j\neq i}\lambda_{j}^{n+p-q+1}\right)^{2}+n(n-1)(\overline{\mathcal{R}}-b)\right\}(nH)^{-1}.\nonumber
\end{eqnarray}
Therefore, taking into account our assumption $b<\overline{\mathcal{R}}$, we conclude that $nH-\lambda_{i}^{n+p-q+1}+\frac{n-1}{2}a>0$, which implies that $\mathcal{L}$ is an elliptic operator.

\end{proof}

To conclude this section, we record the following two algebraic lemmas, whose proofs can be found in \cite{Okumura:74} and \cite{Zhang:05}, respectively.

\begin{lemma}\label{lemma 3}
Let $\mu_i$ ($1\leq i\leq n$) be real numbers such that $\displaystyle\sum_i\mu_i=0$ and $\displaystyle\sum_i\mu_i^2=\beta$, where $\beta$ is a nonnegative constant. Then,
\begin{equation*}
-\frac{n-2}{\sqrt{n(n-1)}}\beta^3\leq\sum_i\mu_i^3\leq\frac{n-2}{\sqrt{n(n-1)}}\beta^3.
\end{equation*}
Moreover, the equality holds if and only if at least $(n-1)$ of the $\mu_i$ are equal.
\end{lemma}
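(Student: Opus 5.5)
The plan is to prove this as a constrained extremum problem via Lagrange multipliers. One correction to the statement first: by homogeneity the bound must read $\frac{n-2}{\sqrt{n(n-1)}}\beta^{3/2}$, not $\beta^{3}$. Replacing $\mu_i$ by $t\mu_i$ multiplies $\sum_i\mu_i^3$ by $t^3$ and $\beta$ by $t^2$, so this is clearly what is intended, and it is the form I will prove. If $\beta=0$ then all $\mu_i=0$ and there is nothing to prove, so assume $\beta>0$. By the symmetry $\mu\mapsto-\mu$, which preserves both constraints and changes the sign of $\sum_i\mu_i^3$, it suffices to prove the upper bound.

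The first step is to set up the extremum problem. Let $f(\mu)=\sum_i\mu_i^3$ and consider the constraint set
\[
K=\Big\{\mu\in\mathbb{R}^n:\ \sum_i\mu_i=0,\ \sum_i\mu_i^2=\beta\Big\}.
\]
$K$ is compact, so $f$ attains its maximum on $K$. The constraint gradients $(1,\dots,1)$ and $2\mu$ are linearly independent on $K$: proportionality would force all $\mu_i$ equal, hence $\mu=0$ since $\sum_i\mu_i=0$, contradicting $\beta>0$. So at any extremum there are multipliers $\lambda,\nu$ with
\[
3\mu_i^2=\lambda+2\nu\mu_i\quad\text{for every } i.
\]
Each $\mu_i$ is therefore a root of one fixed quadratic, so the $\mu_i$ take at most two distinct values. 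They cannot take only one value, since that value would be $0$. Hence at a critical point, $k$ entries equal some $x$ and $n-k$ entries equal some $y$, with $1\le k\le n-1$.

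The second step is to evaluate $f$ on these configurations. The conditions $kx+(n-k)y=0$ and $kx^2+(n-k)y^2=\beta$ give, up to swapping the roles of $x$ and $y$,
\[
x=\sqrt{\tfrac{\beta(n-k)}{nk}},\qquad y=-\sqrt{\tfrac{\beta k}{n(n-k)}} .
\]
A direct computation then yields
\[
f=\frac{\beta^{3/2}}{\sqrt n}\,g(k),\qquad g(k)=\frac{n-2k}{\sqrt{k(n-k)}} .
\]
The remaining step is the one-variable claim that $g$ is strictly decreasing on $[1,n-1]$. The numerator decreases in $k$, and one can check the derivative sign directly, or substitute $k=n/2+s$ and note that $-2s/\sqrt{n^2/4-s^2}$ is strictly decreasing in $s$. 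So the maximum of $g$ over the admissible $k$ is attained only at $k=1$, where
\[
g(1)=\frac{n-2}{\sqrt{n-1}}, \qquad\text{so}\qquad f\le \frac{n-2}{\sqrt{n(n-1)}}\,\beta^{3/2}.
\]
The minimum is attained only at $k=n-1$ and gives the negative of this value.

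For the equality statement, equality can only occur at a critical point with $k=1$ (for the upper bound) or $k=n-1$ (for the lower bound). In either case at least $n-1$ of the $\mu_i$ coincide. Conversely, if $n-1$ of the $\mu_i$ are equal, the constraints force exactly one of these two configurations, and the computation above shows that one of the two bounds is attained.

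The main point needing care is justifying that the maximum occurs at a Lagrange critical point, which is handled by the regularity of the constraints on $K$ shown above. Beyond that, the only real obstacle is the monotonicity of $g$, and that is elementary.
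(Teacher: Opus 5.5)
Your proof is correct, and it takes essentially the standard route. The paper gives no argument of its own and only cites Okumura, whose proof is this same Lagrange-multiplier reduction to two-valued critical configurations followed by a comparison of the critical values. Your correction of the exponent to $\beta^{3/2}$ is also right: the paper's $\beta^{3}$ comes from Okumura's normalization $\sum_i\mu_i^2=\beta^2$, and the corrected form matches the later use of the lemma, where the cubic term is bounded by a multiple of $|\Phi|^3$.
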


\begin{lemma}\label{lemma 4}
Let $a_1,\cdots,a_n,b_1,\cdots,b_n$ be real numbers satisfying $\displaystyle\sum_ib_i=0$. Then,
\begin{equation*}
\sum_{i,j}a_ia_j(b_i-b_j)^2\leq\frac{n}{\sqrt{n-1}}\sum_ia_i^2\sum_jb_j^2.
\end{equation*}
\end{lemma}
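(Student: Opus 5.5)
The statement is Lemma \ref{lemma 4}: for reals $a_1,\dots,a_n,b_1,\dots,b_n$ with $\sum_i b_i=0$,
\[
\sum_{i,j}a_ia_j(b_i-b_j)^2\leq\frac{n}{\sqrt{n-1}}\sum_ia_i^2\sum_jb_j^2 .
\]
The plan is to reduce it to a quadratic form estimate in the vector $a$ and then bound the top eigenvalue of that form.

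\textbf{Step 1: expand the left-hand side.} Write $A=\sum_i a_i$, $B_2=\sum_j b_j^2$ and $|a|^2=\sum_i a_i^2$. Expanding $(b_i-b_j)^2=b_i^2+b_j^2-2b_ib_j$ gives
\[
\sum_{i,j}a_ia_j(b_i-b_j)^2=2A\sum_i a_ib_i^2-2\Big(\sum_i a_ib_i\Big)^2 .
\]
This is a quadratic form $Q(a)=a^{t}Ma$ with
\[
M=u\mathbf 1^{t}+\mathbf 1u^{t}-2bb^{t},\qquad u_i=b_i^2,\ \mathbf 1=(1,\dots,1).
\]
Since the inequality is homogeneous, we may normalize $|a|=1$ and $B_2=1$. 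It then suffices to show that the largest eigenvalue of $M$ is at most $n/\sqrt{n-1}$.

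\textbf{Step 2: bound the eigenvalue.} The range of $M$ lies in $V=\mathrm{span}\{u,\mathbf 1,b\}$, and $b\perp\mathbf 1$ by hypothesis. The plan is to compute the $3\times3$ restriction of $M$ to $V$ and estimate its top eigenvalue. A cheaper route is to drop the term $-2bb^{t}$, since it is negative semidefinite. The top eigenvalue of $u\mathbf 1^{t}+\mathbf 1u^{t}$ is $\langle u,\mathbf 1\rangle+|u|\sqrt n=1+\sqrt n\,|u|$. However, with $|u|\le 1$ this only yields $1+\sqrt n$, which is too weak. So the $-2bb^{t}$ term has to be kept.

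\textbf{Step 3: variational reduction (where the difficulty lies).} Write $a=s\,\mathbf 1/\sqrt n+w$ with $w\perp\mathbf 1$, so that $A=s\sqrt n$. The aim is to maximize
\[
2s\sqrt n\Big(\frac{s}{\sqrt n}+\langle w,u\rangle\Big)-2\big(\langle w,b\rangle\big)^2 .
\]
One then bounds $\langle w,u\rangle$ using
\[
\sum_i b_i^4-\frac1n\leq\max_i b_i^2\leq\frac{n-1}{n},
\]
where the last inequality follows from Cauchy--Schwarz together with $\sum_i b_i=0$. The expected extremal configuration is $b$ with $n-1$ equal entries. For that $b$ the bound should become equality, which is a useful sanity check.

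Carrying out this optimization cleanly is the main obstacle. If it proves messy, the fallback is Zhang's original approach. One uses Lagrange multipliers on the compact set $\{|a|=1\}\times\{|b|=1,\ \sum_i b_i=0\}$. The critical point equations force the $b_i$ to take at most two values, and the resulting explicit computation gives the constant $n/\sqrt{n-1}$.
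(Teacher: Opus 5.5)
There is a genuine gap, and no argument can close it, because the inequality as stated is false. Take $n=3$, $b=(1,-1,0)$, $a=(2,2,1)$. By your Step 1 expansion, the left-hand side is $2\cdot 5\cdot 4-2\cdot 0^2=40$. The right-hand side is $\frac{3}{\sqrt2}\cdot 9\cdot 2=27\sqrt2\approx 38.18$. More generally, take $b=(1,-1,0,\dots,0)$ and $a=(1,1,y,\dots,y)$ with $y=(\sqrt{2n}-2)/(n-2)$. Then the left-hand side divided by $\sum_i a_i^2\sum_j b_j^2$ equals $1+\sqrt{n/2}$, which exceeds $n/\sqrt{n-1}$ for every $3\le n\le 7$. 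The paper gives no proof of this lemma, only a citation to Zhang, so there is nothing in the text to compare against; the defect lies in the statement as quoted.

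Your Step 2 reduction, carried to the end, shows exactly where your plan breaks. Normalize $|b|=1$ and set $\mu=\sum_i b_i^3$, $P=\sum_i b_i^4-\frac1n$, $v=u-\frac1n\mathbf 1-\mu b$, and $V=|v|^2=P-\mu^2\ge 0$. In the orthonormal system $\mathbf 1/\sqrt n,\ b,\ v/|v|$, the matrix $M$ restricts to
\[
\begin{pmatrix} 2 & \sqrt n\,\mu & \sqrt{nV}\\ \sqrt n\,\mu & -2 & 0\\ \sqrt{nV} & 0 & 0\end{pmatrix},
\]
and $M$ vanishes on the orthogonal complement. So its top eigenvalue is the largest root of $\lambda^3-(4+nP)\lambda-2nV$.

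If $b$ takes at most two values, then $u\in\mathrm{span}\{\mathbf 1,b\}$ and $V=0$. The top eigenvalue is then $\sqrt{4+n\mu^2}\le n/\sqrt{n-1}$ by Okumura's bound $|\sum_i b_i^3|\le\frac{n-2}{\sqrt{n(n-1)}}$; this is your equality configuration. When $V>0$, however, the term $-2nV$ pushes the largest root upward. For $b\propto(1,-1,0,\dots,0)$ one gets $\mu=0$, $V=P=\frac12-\frac1n$, and largest root $1+\sqrt{n/2}$.

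Consequently both of your closing moves fail:
\begin{enumerate}
\item Estimating $\langle w,u\rangle$ only through $|u-\frac1n\mathbf 1|=\sqrt P$ decouples it from the $-2\langle w,b\rangle^2$ term, since one may then take $w\perp b$. This can give at best $1+\sqrt{1+nP}$. With $P\le\max_i b_i^2-\frac1n\le\frac{n-2}{n}$, that yields only the weaker, but true, constant $1+\sqrt{n-1}$.
\item The claim that Lagrange critical points force $b$ to take at most two values is false. For $n=3$ the maximizer is $b=(1,-1,0)/\sqrt2$, which has three distinct entries and top eigenvalue $1+\sqrt{3/2}>3/\sqrt2$.
\end{enumerate}
The configuration with $n-1$ equal entries is a critical point, not the global maximum. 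The lemma needs an additional hypothesis or a larger constant before any proof can succeed.
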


\section{Caracterizations Results of LW-Submanifolds in $\mathbb{L}^{n+p}_q$}\label{sec:main result}

In order to prove our first characterization result, it will be essential to prove the following lower boundedness for the Cheng-Yau's modified operator $\mathcal{L}$ defined in \eqref{operadorL} acting on $nH$.

\begin{proposition}
Let $X:M^n\looparrowright \mathbb{L}^{n+p}_{q}$ be a complete spacelike LW-submanifold with parallel normalized mean curvature vector field and flat normal bundle satisfying conditions \eqref{c1}-\eqref{c3}. Suppose that the second fundamental form of $M^n$ is locally timelike and there exists an orthogonal basis for $TM$ that diagonalizes simultaneously all $B_{\varepsilon},
\varepsilon\in TM^{\perp}.$ If $c=\dfrac{c_1}{n}+2c_2$ and
$(n-1)a^2+4n(\overline{\mathcal{R}}-b)\geq 0$,
then
\begin{equation}\label{hip_1_prop}
\mathcal{L}(nH)\geq |\Phi|^2\cdot P_{n,p,c,H}(|\Phi|),
\end{equation}
where 
\begin{equation}\label{hip_2_prop}
P_{n,p,c,H}(x)=-p^2x^2-\dfrac{n(n-2)}{\sqrt{n(n-1)}}Hx+n(c-H^2).
\end{equation}
\end{proposition}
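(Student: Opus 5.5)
We compute $\mathcal{L}(nH)=\square(nH)+\frac{n-1}{2}a\Delta(nH)$ by relating it to $\frac12\Delta S$, and then use the Simons-type inequality of Lemma~\ref{simons desigualdade}, Lemma~\ref{desigualdade_BeH}, and the algebraic Lemmas~\ref{lemma 3} and~\ref{lemma 4} to bound the result below.

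\textbf{Step 1: expressing $\mathcal{L}(nH)$ through $\Delta S$.}
Write $h^{n+1}$ for $h^{n+p-q+1}$. Choose a local frame in which all $h^\alpha$ are diagonal, which the simultaneous diagonalization hypothesis allows. By the definition of $\square$,
\[
\square(nH)=nH\Delta(nH)-n\sum_{i,j}h^{n+1}_{ij}H_{ij}=\tfrac12\Delta(n^2H^2)-n^2|\nabla H|^2-n\sum_{i,j}h^{n+1}_{ij}H_{ij}.
\]
Differentiating \eqref{eq:main_relation} twice gives
\[
\tfrac12\Delta(S_2-S_1)=\tfrac12\Delta(n^2H^2)+\tfrac{n(n-1)}{2}a\,\Delta H .
\]
The sign structure $S_2-S_1$ versus $S$ needs care here: because the second fundamental form is locally timelike, I will work with the combination that actually appears in the Simons formula. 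Combining the two identities,
\[
\mathcal{L}(nH)=\tfrac12\Delta S-n^2|\nabla H|^2-n\sum_{i,j}h^{n+1}_{ij}H_{ij}.
\]
Substituting Lemma~\ref{simons desigualdade}, the Hessian term $n\sum h^{n+1}_{ij}H_{ij}$ cancels. By Lemma~\ref{desigualdade_BeH} (this is where $(n-1)a^2+4n(\overline{\mathcal R}-b)\ge0$ is used), $|\nabla B|^2-n^2|\nabla H|^2\ge0$. Hence $\mathcal{L}(nH)$ is bounded below by the curvature terms plus the algebraic terms.

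\textbf{Step 2: the curvature terms.}
Using \eqref{c1}, \eqref{c2}, \eqref{R barra}, \eqref{c3} in the diagonal frame:
\begin{itemize}
\item The terms $\sum h_{ij}^\alpha h^{\alpha'}_{jk}\overline R_{\alpha i\alpha' k}$ and their analogues vanish or contribute multiples of $c_1$.
\item The terms $2\sum(\ldots)$ become $\sum_\alpha\sum_{i<j}\overline K_{ij}(\lambda^\alpha_i-\lambda^\alpha_j)^2\ge nc_2|\Phi|^2$.
\end{itemize}
Together these give a contribution $\ge n\big(\frac{c_1}{n}+2c_2\big)|\Phi|^2$ up to normalization, i.e.\ $nc|\Phi|^2$ after rewriting $S-nH^2=|\Phi|^2$, with the $H$-dependent pieces absorbed. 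This matches the target constant $n(c-H^2)$ once the $-nH^2|\Phi|^2$ from the cubic term is collected.

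\textbf{Step 3: the algebraic terms (main obstacle).}
These are $-nH\sum_\alpha\operatorname{tr}\big((h^\alpha)^2h^{n+1}\big)$ and the $\pm[\operatorname{tr}(h^\alpha h^{\alpha'})]^2$ sums. Write $h^{n+1}=\Phi^{n+1}+HI$ with eigenvalues $\mu_i$ of $\Phi^{n+1}$ and $\mu^\alpha_i$ of $\Phi^\alpha$. Then
\[
-nH\operatorname{tr}\big((h^\alpha)^2h^{n+1}\big)=-nH\sum_i(\mu_i^\alpha)^2\mu_i-nH^2|\Phi^\alpha|^2+\cdots .
\]
\begin{itemize}
\item For $\alpha=n+1$, Lemma~\ref{lemma 3} gives $\big|\sum\mu_i^3\big|\le\frac{n-2}{\sqrt{n(n-1)}}|\Phi^{n+1}|^3$.
\item For $\alpha\ne n+1$, $\sum_i(\mu^\alpha_i)^2\mu_i$ is controlled similarly, or via Lemma~\ref{lemma 4}.
\item The quartic terms are bounded by $\sum_{\alpha,\alpha'}[\operatorname{tr}(\Phi^\alpha\Phi^{\alpha'})]^2\le\big(\sum_\alpha|\Phi^\alpha|^2\big)^2 p$-type bounds. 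Cauchy--Schwarz over $p$ normal directions yields the coefficient $-p^2|\Phi|^4$.
\end{itemize}
Collecting terms gives
\[
\mathcal{L}(nH)\ge|\Phi|^2\Big(-p^2|\Phi|^2-\tfrac{n(n-2)}{\sqrt{n(n-1)}}H|\Phi|+n(c-H^2)\Big).
\]
The hard part is the bookkeeping of signs between spacelike ($\beta$) and timelike ($\gamma$) directions in the quartic terms. Local timelikeness and flatness of the normal bundle, which makes the commutator terms $N(\cdot)$ drop, should make the $\gamma$-terms favorable. The only unfavorable contribution should be bounded crudely by $p^2|\Phi|^4$, and this is what I would verify first.
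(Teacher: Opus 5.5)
Your outline is the paper's proof step for step: express $\mathcal{L}(nH)$ through $\tfrac12\Delta S$, cancel the Hessian term against Lemma~\ref{simons desigualdade}, use Lemma~\ref{desigualdade_BeH}, then bound the curvature and algebraic terms. As written, however, two steps are not established.

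\textbf{Step 1.} Write $\nu:=n+p-q+1$. Combining your two identities gives $\mathcal{L}(nH)=\tfrac12\Delta(S_2-S_1)-n^2|\nabla H|^2-n\sum_{i,j}h^{\nu}_{ij}H_{ij}$. Lemmas~\ref{simons desigualdade} and~\ref{desigualdade_BeH}, however, concern $\tfrac12\Delta S$ and $|\nabla B|^2$ with $S=S_1+S_2$. These agree only if $\Delta S_1=0$, which local timelikeness ($H^\beta=0$) does not give. Saying you will ``work with the combination that actually appears'' is not an argument. The paper makes exactly the same replacement: it asserts $\tfrac12 n(n-1)\Delta(aH)+\tfrac12\Delta(n^2H^2)=\tfrac12\Delta S$ ``from \eqref{equacaoRS}''. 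So you are faithful to the source, but the step is harmless only when $\Delta S_1=0$, e.g.\ for $q=p$ where $S_1\equiv0$. A Simons formula for $S_2-S_1$ would instead carry the gradient term $\sum(h^\gamma_{ijk})^2-\sum(h^\beta_{ijk})^2$, which Lemma~\ref{desigualdade_BeH} does not control.

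\textbf{Step 3.} You defer this step, but it is where the content lies, and the decisive bookkeeping is missing. Because $e_\nu=h/H$ is a timelike ($\gamma$) direction and every $\Phi^\alpha$ is traceless, one has
\[
\begin{aligned}
-nH\sum_\alpha\operatorname{tr}\big(h^\nu(h^\alpha)^2\big)&=-nH\sum_\alpha\operatorname{tr}\big(\Phi^\nu(\Phi^\alpha)^2\big)-nH^2|\Phi|^2-2nH^2|\Phi^\nu|^2-n^2H^4,\\
\sum_{\alpha,\gamma}[\operatorname{tr}(h^\alpha h^\gamma)]^2&=\sum_{\alpha,\gamma}[\operatorname{tr}(\Phi^\alpha\Phi^\gamma)]^2+2nH^2|\Phi^\nu|^2+n^2H^4,
\end{aligned}
\]
while $\sum_{\alpha,\beta}[\operatorname{tr}(h^\alpha h^\beta)]^2=\sum_{\alpha,\beta}[\operatorname{tr}(\Phi^\alpha\Phi^\beta)]^2$.

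The $\gamma$-sum may be discarded as nonnegative only after it has cancelled $2nH^2|\Phi^\nu|^2+n^2H^4$. Discarding it as ``favorable'' at the start leaves a $-n^2H^4$ that is not of the form $|\Phi|^2P_{n,p,c,H}(|\Phi|)$. The paper's displayed expansion is muddled on precisely this point, since it places $e_\nu$ among the $\beta$'s, though its final polynomial is what the cancellation gives.

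After the cancellation you still need three things:
\begin{itemize}
\item the mixed Okumura inequality $\big|\sum_i a_ib_i^2\big|\le\frac{n-2}{\sqrt{n(n-1)}}\big(\sum_i a_i^2\big)^{1/2}\sum_i b_i^2$ for $\sum_i a_i=\sum_i b_i=0$ (Lemma~\ref{lemma 3} only covers $\alpha=\nu$, and Lemma~\ref{lemma 4} bounds a different quantity);
\item $H>0$;
\item the bound $\sum_{\alpha,\beta}[\operatorname{tr}(\Phi^\alpha\Phi^\beta)]^2\le|\Phi|^4\le p^2|\Phi|^4$.
\end{itemize}

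\textbf{Step 2.} Your count is off by a factor of two. The $c_2$-terms equal $\sum_\alpha\sum_{i,k}\overline R_{ikik}(\lambda^\alpha_i-\lambda^\alpha_k)^2=2\sum_\alpha\sum_{i<k}\overline R_{ikik}(\lambda^\alpha_i-\lambda^\alpha_k)^2\ge2nc_2|\Phi|^2$. In the paper the $c_1$-terms reduce to $c_1S-c_1nH^2=c_1|\Phi|^2$. With your ``$\ge nc_2|\Phi|^2$'' the coefficient would be $c_1+nc_2$ rather than $nc=c_1+2nc_2$.
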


\begin{proof}

Let us consider $\{e_1,e_2,\cdots,e_{n}\}$ a local orthonormal frame on $M^{n}$ such that $h_{ij}^{\alpha}=\lambda_i^{\alpha}\delta_{ij}$, for all $\alpha\in \{n+1,n+2,\cdots,n+p\}.$ From of the Lemma \ref{formula simons} we get
\begin{eqnarray*}
2\left(\sum_{i,j,k,m,\alpha}h_{ij}^{\alpha}h_{km}^{\alpha}\overline{R}_{mijk}+\sum_{i,j,k,m,\alpha}h_{ij}^{\alpha}h_{jm}^{\alpha}\overline{R}_{mkik}\right)
&=&2\sum_{i,k,\alpha}\left((\lambda_{i}^{\alpha})^2+\lambda_i^{\alpha}\lambda_k^{\alpha}\overline{R}_{kiik}\right)\\
&=&\sum_{i,k,\alpha}\overline{R}_{ikik}\left(\lambda_i^{\alpha}-\lambda_k^{\alpha}\right)^2.
\end{eqnarray*}
Since that $L_{q}^{n+p}$ satisfies the condition $\overline{K}(u,v)\geq c_2$ for any $u,v\in TM,$ we have
\begin{eqnarray*}
2\left(\sum_{i,j,k,m,\alpha}h_{ij}^{\alpha}h_{km}^{\alpha}\overline{R}_{mijk}+\sum_{i,j,k,m,\alpha}h_{ij}^{\alpha}h_{jm}^{\alpha}\overline{R}_{mkik}\right)\geq c_2\sum_{i,k,\alpha}(\lambda_i^{\alpha}-\lambda_k^{\alpha})^2=2nc_2|\Phi|^2.
\end{eqnarray*}
Now, for each $\alpha$, consider $h^{\alpha}$ the symmetric matrix $(h_{ij}^{\alpha})$, and 
\begin{eqnarray*}
S_{\alpha\beta}=\sum_{i,j}h_{ij}^{\alpha}h_{ij}^{\beta}.
\end{eqnarray*}
Then the $(p\times p)$-matrix $(S_{\alpha\beta})$ is symmetric and we can see that is diagonalizable for a choose of $e_{n+1},e_{n+2},\cdots,e_{n+p}.$ Thence 
\begin{eqnarray*}
S_{\alpha}=S_{\alpha\alpha}=\sum_{i,j}h_{ij}^{\alpha}h_{ij}^{\alpha},
\end{eqnarray*}
and we have that 
\begin{eqnarray*}
S=\sum_{\alpha}S_{\alpha}.
\end{eqnarray*}
Since that $\mathbb{L}_{q}^{n+p}$ satisfies the condition $\overline{K}(u,\eta)=\dfrac{c_1}{n},$ for any $u\in TM$ and $\eta\in TM^{\perp},$ we obtain
\begin{eqnarray*}
\sum_{i,j,k,\alpha,\alpha'}h_{ij}^{\alpha}h_{jk}^{\alpha'}\overline{R}_{\alpha i\alpha' k}&+&\sum_{i,j,k,\alpha,\alpha'}h_{ij}^{\alpha}h_{kk}^{\alpha'}\overline{R}_{\alpha ij \alpha'}+\sum_{i,j,k,\alpha,\alpha'}h_{ij}^{\alpha}h_{ij}^{\alpha'}\overline{R}_{\alpha k\alpha'k}+\sum_{i,j,k,\alpha,\alpha'}h_{ij}^{\alpha}h_{kj}^{\alpha'}\overline{R}_{\alpha ki \alpha'}\\
&=&\sum_{i,k,\alpha}(\lambda_i^{\alpha})^2\overline{R}_{\alpha k\alpha k}-nH^2c_1
=c_1|\Phi|^2.
\end{eqnarray*}

Finally note that
$$
\dfrac{1}{2}\Delta S\geq |\nabla B|^2+2nc_2|\Phi|^2+c_1|\Phi|^2+n\sum_{i,j}h_{ij}^{n+p-q+1}H_{ij}-nH\sum_{i,j,m,\alpha}h_{ij}^{\alpha}h_{mi}^{\alpha}h_{mj}^{n+p-q+1}
$$
$$
+\sum_{\alpha,\gamma}[tr(h^{\alpha}h^{\gamma})]^2-\sum_{\alpha,\beta}[tr(h^{\alpha}h^{\beta})]^2,
$$
Therefore,
\begin{equation}\label{deltaS-inequalite}
\dfrac{1}{2}\Delta S\geq |\nabla B|^2+cn|\Phi|^2+n\sum_{i,j}h_{ij}^{n+p-q+1}H_{ij}-nH\sum_{i,j,m,\alpha}h_{ij}^{\alpha}h_{mi}^{\alpha}h_{mj}^{n+p-q+1}
\end{equation}
$$
+\sum_{\alpha,\gamma}[tr(h^{\alpha}h^{\gamma})]^2-\sum_{\alpha,\beta}[tr(h^{\alpha}h^{\beta})]^2,
$$
Thence, from of $\mathcal{L}=\square+\dfrac{n-1}{2}a\Delta,$ we have
\begin{eqnarray*}
\mathcal{L}(nH)&=&\square (nH)+\dfrac{n-1}{2}a\Delta  (nH)\\
&=&\sum_{i,j}(nH\delta_{ij}-h_{ij}^{n+p-q+1})(nH)_{ij}+\dfrac{n-1}{2}a\Delta(nH)\\
&=&n^2H\sum_{i}H_{ii}-n\sum_{i,j}h_{ij}^{n+p-q+1}H_{ij}+\dfrac{n-1}{2}a\Delta (nH)\\
&=&n^2H\Delta H-n\sum_{i,j}h_{ij}^{n+p-q+1}H_{ij}+\dfrac{n-1}{2}a\Delta (nH)
\end{eqnarray*}

Note that
\begin{eqnarray*}
\Delta H^2=2H\Delta H+2|\nabla H|^2.
\end{eqnarray*}
Thus,
\begin{eqnarray*}
\mathcal{L}(nH)=\dfrac{1}{2}\Delta (n^2H^2)-n^2|\nabla H|^2-n\sum_{i,j}h_{ij}^{n+p-q+1}H_{ij}+\dfrac{n-1}{2}a\Delta(nH).
\end{eqnarray*}
Since that $R=aH+b$ and $\mathbb{L}_{q}^{n+p}$ satisfies the conditions \eqref{c1}-\eqref{c3}, we have that $\mathcal{R}$ is constant, then from \eqref{equacaoRS} we get 
\begin{eqnarray*}
\dfrac{1}{2}n(n-1)\Delta(aH)+\dfrac{1}{2}\Delta(n^2H^2)=\dfrac{1}{2}\Delta S.
\end{eqnarray*}
Therefore, using the inequality \eqref{deltaS-inequalite} and Lemma \eqref{desigualdade_BeH} we conclude that 
\begin{eqnarray}\label{retorno}
\mathcal{L}(nH)&=&\dfrac{1}{2}\Delta S-n^2|\nabla H|^2-n\sum_{i,j}h_{ij}^{n+p-q+1}H_{ij} \nonumber \\
&\geq& |\nabla B|^2-n^2|\nabla H|^2+cn|\Phi|^2-nH\sum_{i,j,m,\alpha}h_{ij}^{\alpha}h_{mi}^{\alpha}h_{mj}^{n+p-q+1}+\sum_{\alpha,\gamma}[tr(h^{\alpha}h^{\gamma})]^2-\sum_{\alpha,\beta}[tr(h^{\alpha}h^{\beta})]^2\\
&\geq& cn|\Phi|^2-nH\sum_{i,j,m,\alpha}h_{ij}^{\alpha}h_{mi}^{\alpha}h_{mj}^{n+p-q+1}+\sum_{\alpha,\gamma}[tr(h^{\alpha}h^{\gamma})]^2-\sum_{\alpha,\beta}[tr(h^{\alpha}h^{\beta})]^2. \nonumber
\end{eqnarray}
Next, we proceed with the explicit computation of the expression involving the mixed trace terms of the second fundamental forms:
\begin{equation}\label{calculo_3_termos}
-nH\sum_{\alpha}tr[h^{n+p-q+1}(h^{\alpha})^2]+\sum_{\alpha,\gamma}[tr(h^{\alpha}h^{\gamma})]^2-\sum_{\alpha,\beta}[tr(h^{\alpha}h^{\beta})]^2.
\end{equation}
Note that
\begin{eqnarray*}
nH\sum_{i,\alpha}h_{ii}^{\alpha}h_{ii}^{\alpha}h_{ii}^{n+p-q+1}&=&nH\sum_{i,\alpha}\left(\Phi_{ii}^{\alpha}+H\delta_{\alpha (n+p-q+1)}\right)^2\left(\Phi_{ii}^{n+p-q+1}+H\right)\\
&=&nH\sum_{i,\alpha}\left((\Phi_{ii}^{\alpha})^2+2H\Phi_{ii}^{\alpha}\delta_{\alpha (n+p-q+1)}+H^2\delta_{\alpha (n+p-q+1)}\right)(\Phi_{ii}^{n+p-q+1}+H)\\
&=&nH\sum_{i,\alpha}((\Phi_{ii}^{\alpha})^2\Phi_{ii}^{n+p-q+1}+H(\Phi_{ii}^{\alpha})^2+2H\Phi_{ii}^{\alpha}\Phi_{ii}^{n+p-q+1}\delta_{\alpha (n+p-q+1)}+\\
&+&2H^2\Phi_{ii}^{\alpha}\delta_{\alpha (n+p-q+1)}+H^2\Phi_{ii}^{n+p-q+1}\delta_{\alpha (n+p-q+1)}+H^3\delta_{\alpha (n+p-q+1)})\\
&=&nH\sum_{\alpha}tr[\Phi^{n+p-q+1}\cdot (\Phi^{\alpha})^2]+nH^2|\Phi|^2+n^2H^4.
\end{eqnarray*}
Moreover,

\begin{eqnarray*}
\sum_{\alpha,\beta}[tr(h^{\alpha}h^{\beta})]^2
&=&\sum_{\alpha,\beta}[\sum_{i}h_{ii}^{\alpha}h_{ii}^{\beta}]^2 
= \sum_{\alpha,\beta}[\sum_{i}(\Phi_{ii}^{\alpha}+H\delta_{\alpha (n+p-q+1)})\cdot (\Phi_{ii}^{\beta} +H\delta_{\beta (n+p-q+1)}]^2\\
&=&\sum_{\alpha,\beta}[\sum_{i}(\Phi_{ii}^{\alpha}\Phi_{ii}^{\beta}+H\Phi_{ii}^{\alpha}\delta_{\beta (n+p-q+1)}+H\Phi_{ii}^{\beta}\delta_{\alpha (n+p-q+1)}+H^2\delta_{\alpha (n+p-q+1)}\delta_{\beta (n+p-q+1)})]^2\\
&=&\sum_{\alpha,\beta}[\sum_{i}\Phi_{ii}^{\alpha}\Phi_{ii}^{\beta}+H\delta_{\beta (n+p-q+1)}\sum_{i}\Phi_{ii}^{\alpha}+H\delta_{\alpha (n+p-q+1)}\sum_{i}\Phi_{ii}^{\beta} \\
&&+H^2\delta_{\alpha (n+p-q+1)}\delta_{\beta (n+p-q+1)}\sum_{i}1]^2\\
&=&\sum_{\alpha,\beta}[tr(\Phi^{\alpha}\Phi^{\beta})+nH^2\delta_{\alpha (n+p-q+1)}\delta_{\beta (n+p-q+1)}]^2\\
&=&\sum_{\alpha,\beta}[tr(\Phi^{\alpha}\Phi^{\beta})]^2+2nH^2\sum_{\alpha,\beta}\delta_{\alpha (n+p-q+1)}\delta_{\beta (n+p-q+1)}tr(\Phi^{\alpha}\Phi^{\beta})+n^2H^4\\
&=&\sum_{\alpha,\beta}[tr(\Phi^{\alpha}\Phi^{\beta})]^2+2nH^2|\Phi^{n+p-q+1}|^2+n^2H^4,
\end{eqnarray*}
and finally
\begin{eqnarray*}
\sum_{\alpha,\gamma}[tr(h^{\alpha}h^{\gamma})]^2&=&\sum_{\alpha,\gamma}[\sum_{i}(h_{ii}^{\alpha}h_{ii}^{\gamma})]^2\\
&=&\sum_{\alpha,\gamma}[\sum_{i}(\Phi_{ii}^{\alpha}+H\delta_{\alpha (n+p-q+1)})\Phi_{ii}^{\gamma}]^2\\
&=&\sum_{\alpha,\gamma}[\sum_{i}\Phi_{ii}^{\alpha}\Phi_{ii}^{\gamma}+H\Phi_{ii}^{\gamma}\delta_{\alpha (n+p-q+1)}]^2\\
&=&\sum_{\alpha,\gamma}[tr(\Phi^{\alpha}\Phi^{\gamma})+H\delta_{\alpha (n+p-q+1)}tr(\Phi^{\gamma})]^2\\
&=&\sum_{\alpha,\gamma}[tr(\Phi^{\alpha}\Phi^{\gamma})]^2.
\end{eqnarray*}
Substituting these three equalities into \eqref{calculo_3_termos}, we obtain
\begin{align}\label{termogrande}
-nH\sum_{i,j,m,\alpha}h_{ij}^{\alpha}&h_{mi}^{\alpha}h_{mj}^{n+p-q+1}
-\sum_{\alpha,\beta}[\operatorname{tr}(h^{\alpha}h^{\beta})]^2
+\sum_{\alpha,\gamma}[\operatorname{tr}(h^{\alpha}h^{\gamma})]^2 \nonumber \\
&= -nH\sum_{\alpha}\operatorname{tr}[\Phi^{n+p-q+1}(\Phi^{\alpha})^2]
     -nH^2|\Phi|^2
     -\sum_{\alpha,\beta}[\operatorname{tr}(\Phi^{\alpha}\Phi^{\beta})]^2
     +\sum_{\alpha,\gamma}[\operatorname{tr}(\Phi^{\alpha}\Phi^{\gamma})]^2 \nonumber \\
&\quad -2nH^2|\Phi^{n+p-q+1}|^2 \nonumber \\[4pt]
&\geq -\dfrac{n(n-2)}{\sqrt{n(n-1)}}H|\Phi|^3
      +nH^2|\Phi|^2
      -2nH^2|\Phi|^2
      -\sum_{\alpha,\beta}[\operatorname{tr}(\Phi^{\alpha}\Phi^{\beta})]^2
      +\sum_{\alpha,\gamma}[\operatorname{tr}(\Phi^{\alpha}\Phi^{\gamma})]^2 \\[4pt]
&\geq -\dfrac{n(n-2)}{\sqrt{n(n-1)}}H|\Phi|^3
      -nH^2|\Phi|^2
      -\sum_{\alpha,\beta}[\operatorname{tr}(\Phi^{\alpha}\Phi^{\beta})]^2
      +\dfrac{1}{p}|\Phi|^4, \nonumber
\end{align}
furthermore, since
\begin{eqnarray*}
[tr(\Phi^{\alpha}\Phi^{\beta})]^2&=&(\Phi_{11}^{\alpha}\Phi_{11}^{\beta}+\Phi_{22}^{\alpha}\Phi_{22}^{\beta}+\cdots+\Phi_{nn}^{\alpha}\Phi_{nn}^{\beta})^2\\
&\leq&|\Phi^{\alpha}|^2\cdot |\Phi^{\beta}|^2\leq |\Phi|^2\cdot |\Phi|^2=|\Phi|^4,
\end{eqnarray*}
hence,
\begin{eqnarray*}
\sum_{\alpha,\beta}[tr(\Phi^{\alpha}\Phi^{\beta})]^2\leq |\Phi|^4\sum_{\alpha,\alpha'}1=p^2|\Phi|^4,
\end{eqnarray*}
or yet, 
\begin{eqnarray*}
-\sum_{\alpha,\beta}[tr(\Phi^{\alpha}\Phi^{\beta})]^2\geq -p^2|\Phi|^4.
\end{eqnarray*}
Therefore, from \eqref{termogrande}
\begin{eqnarray*}
-\dfrac{n(n-2)}{\sqrt{n(n-1)}}H|\Phi|^3-nH^2|\Phi|^2-\sum_{\alpha,\beta}[tr(\Phi^{\alpha}\Phi^{\beta})]^2\geq -\dfrac{n(n-2)}{\sqrt{n(n-1)}}H|\Phi|^3-nH^2|\Phi|^2-p^2|\Phi|^4+\dfrac{1}{p}|\Phi|^4.
\end{eqnarray*}
Consequently,
\begin{equation}\label{4.7}
L(nH)\geq cn|\Phi|^2-\dfrac{n(n-2)}{\sqrt{n(n-1)}}H|\Phi|^3-nH^2|\Phi|^2-p^2|\Phi|^4+\dfrac{1}{p}|\Phi|^4
\geq |\Phi|^2P_{n,p,c,H}(|\Phi|),
\end{equation}
where
\begin{equation}
P_{n,p,c,H}(x)=-p^2x^2-\dfrac{n(n-2)}{\sqrt{n(n-1)}}Hx-n(H^2-c).
\end{equation}

\end{proof}

\begin{remark}

A crucial observation, which will be employed later, is that by returning to \eqref{retorno} and using the estimate in \eqref{4.7}, we obtain the following inequality:
\begin{equation}\label{estimativa_chave}
\mathcal{L}(nH) \geq |\nabla B|^2 - n^2 |\nabla H|^2 + |\Phi|^2  P_{n,p,c,H}(|\Phi|).
\end{equation}

\end{remark}

\begin{remark}
 \noindent
Note that 
$$
x_v=-\dfrac{1}{2p^2}\cdot \dfrac{n(n-2)H}{\sqrt{n(n-1)}}\leq 0.
$$
Due to the behavior of the polynomial \(P_{n,p,c,H}(x)\), the fact that \(x_v < 0\) together with the existence of a positive real root given by 
\begin{eqnarray*}
C(n,p,c,H)
&=&\dfrac{\sqrt{n}}{2p^2\sqrt{n-1}}\left[-(n-2)H+\sqrt{(n-2)^2H^2+4p^2(n-1)(c-H^2)}\right]>0,
\end{eqnarray*}
necessarily implies the condition \(-n(H^2-c)>0\). This turns out \(H^2 < c\) to be a natural hypothesis to assume in this context, in this case we have that.
   
\end{remark}

\subsection{Via Omori-Yau maximum principle.}

To prove our first main result, we require a criterion ensuring the existence of an Omori-type sequence associated with the $\mathcal{L}$-operator. For this purpose, we establish the following result, which extends Lemma 4 in Gomes et al. \cite{nazareno} and Proposition 3.2 in Aquino et al. \cite{aquino}.

\begin{lemma}  Let $X:M^n\looparrowright \mathbb{L}^{n+p}_{q}$ be a complete spacelike LW-submanifold satisfying conditions ~\eqref{c1},~\eqref{c2} and ~\eqref{c3} such that $R=aH+b$, with $a\geq 0$ and $(n-1)a^2+4n(\overline{\mathcal{R}}-b)\geq 0.$ If $H$ is bounded on $M^n$, $S_1\leq \sum_{i,j,\gamma\neq n+p-q+1}(h_{ij}^{\gamma})^2$ and $S_1$ is bounded on $M^n$, then there is a sequence of points $\{q_k\}_{k\in \mathbb{N}}\subset M^n$ such that 
\begin{eqnarray*}
\lim_{k}(nH)(q_k)=\sup_{M}(nH),\quad \lim_{k}|\nabla nH(q_k)|=0\quad\mbox{and}\quad \limsup_{k}\mathcal{L}(nH(q_k))\leq 0.
\end{eqnarray*}
\end{lemma}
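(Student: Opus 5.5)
The plan is to apply a generalized Omori--Yau maximum principle for trace-type operators $Lu=\mathrm{tr}(P\circ\mathrm{hess}\,u)$. We need such a principle, in the form of Pigola--Rigoli--Setti (see also Alías--Impera--Rigoli), to hold on a complete Riemannian manifold. It suffices that: the symmetric operator $P$ be positive semidefinite with $\sup_M \mathrm{tr}P<+\infty$; the Ricci curvature be bounded from below; and the function be bounded above. We write
$$\mathcal{L}f=\sum_{i,j}\Big(\big(nH+\tfrac{n-1}{2}a\big)\delta_{ij}-h_{ij}^{n+p-q+1}\Big)f_{ij},$$
so $\mathcal{L}=\mathrm{tr}(P\circ\mathrm{hess})$ with $P=(nH+\frac{n-1}{2}a)I-A_{n+p-q+1}$. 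Here $A_{n+p-q+1}$ is the shape operator in the direction $e_{n+p-q+1}=h/H$.

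\emph{Step 1 (boundedness of $\mathrm{tr}P$ and of $S$).} We have $\mathrm{tr}P=n(n-1)H+\frac{n(n-1)}{2}a$, which is bounded because $H$ is bounded. Next we bound $S$. From \eqref{eq:main_relation}, $S_2-S_1=n(n-1)aH+n(n-1)b-n(n-1)\overline{\mathcal{R}}+n^2H^2$, and the right-hand side is bounded. Write $S_2=|h^{n+p-q+1}|^2+\sum_{\gamma\neq n+p-q+1}|h^\gamma|^2$. The hypothesis $S_1\le\sum_{\gamma\ne n+p-q+1}|h^\gamma|^2$, together with the boundedness of $S_1$ and of $S_2-S_1$, should bound $S_2$ and hence $S=S_1+S_2$. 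This is the step I expect to be the main obstacle. The literal hypothesis only gives $S_2\ge S_1+|h^{n+p-q+1}|^2$, which is not an upper bound. So I would first try to combine it with $S_2-S_1$ bounded: this gives $|h^{n+p-q+1}|^2\le S_2-S_1$, which is bounded, so $|h^{n+p-q+1}|$ is bounded. I would then revisit whether the remaining $\sum_{\gamma\ne n+p-q+1}|h^\gamma|^2$ is controlled by $S_2-S_1+S_1$, and I believe it is. Indeed, $S_2=(S_2-S_1)+S_1$, and both terms are bounded, so $S_2$ is bounded directly. Then $S$ is bounded, and in particular all eigenvalues of $A_{n+p-q+1}$ are bounded.

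\emph{Step 2 (Ricci lower bound).} We use the Gauss equation \eqref{EQUACAO DE GAUSS} with the ambient conditions \eqref{c2} and \eqref{c1}. For a unit vector $X$, $\mathrm{Ric}(X,X)\ge (n-1)c_2$ minus quadratic terms in the $h^\alpha$. By Cauchy--Schwarz these are bounded by a constant times $nH\sqrt{S}+S$. Both are bounded by Step 1, so $\mathrm{Ric}\ge -K$ for some constant $K$.

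\emph{Step 3 (positive semidefiniteness of $P$).} Since $a\ge0$ and $(n-1)a^2+4n(\overline{\mathcal{R}}-b)\ge0$, the computation in the proof of Lemma \ref{L eliptico} applies. It gives
$$nH-\lambda_i^{n+p-q+1}+\tfrac{n-1}{2}a\ \ge\ \tfrac{1}{2nH}\Big(\textstyle\sum_{j\ne i}(\lambda_j)^2+(\sum_{j\ne i}\lambda_j)^2+n(n-1)(\overline{\mathcal{R}}-b)\Big).$$
If $b\le\overline{\mathcal{R}}$, this is $\ge0$. If instead $b>\overline{\mathcal{R}}$, I would use $a\ge0$ together with the expression \eqref{a} to show the bracket is nonnegative anyway. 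This uses the hypothesis $(n-1)a^2\ge 4n(b-\overline{\mathcal{R}})$ through a completing-the-square argument, as in Gomes et al. and Aquino et al. Hence $P\ge0$.

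\emph{Step 4.} Since $H$ is bounded, $nH$ is bounded above. The Omori--Yau principle for $\mathcal{L}$ then yields a sequence $\{q_k\}$ with $nH(q_k)\to\sup nH$, $|\nabla nH(q_k)|\to0$, and $\limsup\mathcal{L}(nH)(q_k)\le0$, which completes the argument.
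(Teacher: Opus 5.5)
\textbf{Gap in Step 3.} Your overall plan matches the paper's: bound the curvature of $M^n$ from below via the Gauss equation, show the coefficients of $\mathcal{L}$ are nonnegative and bounded, and apply an Omori-type principle. The gap is in Step 3, in the case $b>\overline{\mathcal{R}}$, which the hypothesis $(n-1)a^2+4n(\overline{\mathcal{R}}-b)\geq 0$ is designed to allow. You propose to show that the bracket $\sum_{j\neq i}\lambda_j^2+(\sum_{j\neq i}\lambda_j)^2+n(n-1)(\overline{\mathcal{R}}-b)$ is nonnegative. That cannot be done. The bracket does not involve $a$ at all, and it arises only after discarding $S_2-S_1-|h^{n+p-q+1}|^2\geq 0$.

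A pointwise counterexample shows this. Take $h^{n+p-q+1}=\lambda I$ with $0<\lambda^2<b-\overline{\mathcal{R}}$, take $S_1=0$, and put the rest of $S_2-S_1$ demanded by the LW relation into a traceless $h^\gamma$, $\gamma\neq n+p-q+1$. Choose $a\geq 0$ with $(n-1)a^2\geq 4n(b-\overline{\mathcal{R}})$. Every hypothesis holds, yet the bracket equals $n(n-1)(\lambda^2-(b-\overline{\mathcal{R}}))<0$, while the true coefficient is $(n-1)(\lambda+a/2)>0$.

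\textbf{The fix.} Use the ingredient you already isolated in Step 1. Completing the square in \eqref{eq:main_relation} gives
\[
S_2-S_1=\Big(nH+\tfrac{n-1}{2}a\Big)^2-\tfrac{n-1}{4}\big[(n-1)a^2+4n(\overline{\mathcal{R}}-b)\big]\le\Big(nH+\tfrac{n-1}{2}a\Big)^2 .
\]
Then $(\lambda_i^{n+p-q+1})^2\le|h^{n+p-q+1}|^2\le S_2-S_1$ yields $|\lambda_i^{n+p-q+1}|\le nH+\frac{n-1}{2}a$, using $H>0$ and $a\geq 0$. Hence $0\le nH+\frac{n-1}{2}a-\lambda_i^{n+p-q+1}\le 2nH+(n-1)a$ for either sign of $\overline{\mathcal{R}}-b$. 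This is exactly the paper's argument, and it gives positivity and boundedness of the coefficients at once.

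\textbf{Step 2 also needs a correction.} The Omori--Yau principle for $\mathrm{tr}(P\circ\mathrm{hess})$ with $P\geq 0$ and $\sup\mathrm{tr}P<\infty$ (Pigola--Rigoli--Setti, Al\'ias--Impera--Rigoli) requires the radial sectional curvature bounded below, not the Ricci curvature. The reason is that $\mathrm{tr}(P\circ\mathrm{hess}\,r)\le\mathrm{tr}P\cdot\lambda_{\max}(\mathrm{hess}\,r)$, and only the Hessian comparison theorem controls $\lambda_{\max}(\mathrm{hess}\,r)$. A Ricci lower bound controls only $\Delta r$. This is easy to repair: since $S$ is bounded by Step 1, the Gauss equation together with \eqref{c2} gives $K_M\geq c_2-S$. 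The paper then applies Omori's Hessian principle directly to $nH$, obtaining $\mathrm{hess}(nH)(q_k)\le\varepsilon_k I$ with $\varepsilon_k\to 0$, and multiplies by the nonnegative, bounded coefficients. That is the same mechanism your black-box principle encodes.
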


\begin{proof}
 Let us choose a local orthonormal frame $\{e_1,e_2,\cdots,e_n\}$ on $M^n$ such that $h_{ij}^{n+p-q+1}=\lambda_{i}^{n+p-q+1}\delta_{ij}.$ From of $\mathcal{L}=\square +\dfrac{n-1}{2}a\Delta$,  we have that 
\begin{equation}
\mathcal{L}(nH)=n\sum_{i}\left(nH+\dfrac{n-1}{2}a-\lambda_{i}^{n+p-q+1}\right)H_{ii}.
\end{equation}

Thus, for all $i=1,\cdots,n$ and since that $\mathbb{L}_{q}^{n+p}$ satisfies the conditions $c_1$ and $c_3$ then from $S_2-S_1=n(n-1)(aH+b-\overline{\mathcal{R}})+n^2H^2.$ Moreover, suppose that $S_1\leq \displaystyle\sum_{i,j,\gamma\neq n+p-q+1}(h_{ij}^{\gamma})^2$
 and with straightforward computation we get

 \begin{eqnarray*}
 (\lambda_i^{n+p-q+1})^2\leq S_2-S_1&=&n(n-1)(aH+b-\overline{\mathcal{R}})+n^2H^2\\
 &=&\left(nH+\dfrac{n-1}{2}a\right)^2-\dfrac{n-1}{4}\left[(n-1)a^2+4n(\overline{\mathcal{R}}-b)\right]\\
 &\leq& \left(nH+\dfrac{n-1}{2}a\right)^2,
 \end{eqnarray*}
where we have used our assumption that $(n-1)a^2+4n(\overline{\mathcal{R}}-b)\geq 0$  to obtain
the last inequality. Consequently, for all $i=1,\cdots,n$, we have

\begin{eqnarray}\label{desigualdade de lambda}
|\lambda_i^{n+p-q+1}|\leq \left|nH+\dfrac{n-1}{2}a\right|.
\end{eqnarray}
Nonetheless, from 
\begin{eqnarray*}
R_{ijij}&=&\overline{R}_{ijij}+\sum_{i,j,\beta}(h_{ii}^{\beta}h_{jj}^{\beta}-(h_{ij}^{\beta})^2)-\sum_{i,j,\gamma}(h_{ii}^{\gamma}h_{jj}^{\gamma}-(h_{ij}^{\gamma})^2)\\
&=&\overline{R}_{ijij}+\sum_{i,j,\beta}h_{ii}^{\beta}h_{jj}^{\beta}-\sum_{i,j,\beta}(h_{ij}^{\beta})^2-\sum_{i,j,\gamma}h_{ii}^{\gamma}h_{jj}^{\gamma}+\sum_{i,j,\gamma}(h_{ij}^{\gamma})^2.
\end{eqnarray*}
Since that $H^{\beta}=0,$  for each $\beta$ then 

$$\sum_{i,j,\beta}h_{ii}^{\beta}h_{jj}^{\beta}=\sum_{i,\beta}h_{ii}^{\beta}\sum_{j}h_{jj}^{\beta}=\sum_{i,\beta}nH^{\beta}=0.$$
Hence, we get

\begin{eqnarray*}
R_{ijij}&=&\overline{R}_{ijij}-\sum_{i,j,\beta}(h_{ij}^{\beta})^2-\sum_{i,j,\gamma}h_{ii}^{\gamma}h_{jj}^{\gamma}+\sum_{i,j,\gamma}(h_{ij}^{\gamma})^2\\
&=&\overline{R}_{ijij}+S_2-S_1-\sum_{i,j,\gamma}h_{ii}^{\gamma}h_{jj}^{\gamma}\\
&\geq&\overline{R}_{ijij}+\sum_{i,j}(h_{ij}^{n+p-q+1})^2-\sum_{i,j,\gamma}h_{ii}^{\gamma}h_{jj}^{\gamma}\\
&\geq& \overline{R}_{ijij}-\sum_{i,j,\gamma}h_{ii}^{\gamma}h_{jj}^{\gamma}.
\end{eqnarray*}
Since that $S_2\leq \left(nH+\dfrac{n-1}{2}a\right)^2+S_1$ then we get that for each $i,j,\gamma$, from ~\eqref{desigualdade de lambda} we have

\begin{eqnarray*}
h_{ii}^{\gamma}h_{jj}^{\gamma}\leq |h_{ii}^{\gamma}|\cdot |h_{jj}^{\gamma}|\leq \left(nH+\dfrac{n-1}{2}a\right)^2+S_1.
\end{eqnarray*}
Therefore, since we are supposing that $H$ and $S_1$ is bounded on $M^n$ and $\mathbb{L}_{q}^{n+p}$ satisfies
the condition $c_2$, this is, $R_{ijij}\geq c_2$, it follows that the sectional curvatures of
$M^n$ are bounded from below.
Thus, we may apply the well known generalized
maximum principle of Omori \cite{omori} to the function $nH$, obtaining a sequence
of points $\{q_k\}_{k\in \mathbb{N}}$ in $M^n$
such that
\begin{eqnarray}\label{desigualdade omori yau}
\lim_{k}(nH)(q_k)=\sup(nH),\quad \lim_{k}|\nabla (nH)|=0,\quad and\quad \limsup_{k}\sum_{i}(nH_{ii})(q_k)\leq 0.
\end{eqnarray}

Since $\sup_{M}H>0$, taking subsequences if necessary, we can arrive to a sequence $\{q_k\}_{k\in \mathbb{N}}$ in $M^n$ which satisfies ~\eqref{desigualdade omori yau} and such that $H(q_k)\geq 0.$ Hence, since $a\geq 0$, we have

\begin{eqnarray*}
0&\leq& nH(q_k)+\dfrac{n-1}{2}a-|\lambda_{i}^{n+p-q+1}(q_k)|\leq nH(q_k)+\dfrac{n-1}{2}a-\lambda_{i}^{n+p-q+1}(q_k)\\
&\leq& nH(q_k)+\dfrac{n-1}{2}a+|\lambda_{i}^{n+p-q+1}(q_k)|\leq 2nH(q_k)+(n-1)a.
\end{eqnarray*}
This previous estimate shows that function $nH(q_k)+\dfrac{n-1}{2}a-\lambda_{i}^{n+p-q+1}(q_k)$
is nonnegative and bounded on $M^n$, for all $k\in \mathbb{N}.$
Therefore, taking into account ~\eqref{desigualdade omori yau},  we obtain

\begin{eqnarray*}
\limsup_{k}(\mathcal{L}(nH)(q_k))\leq n\sum_{i}\limsup_{k}\left[\left(nH+\dfrac{n-1}{2}a-\lambda_{i}^{n+p-q+1}\right)(q_k)H_{ii}(q_k)\right]\leq 0.
\end{eqnarray*}

\end{proof}

Now, we are in position to present our first theorem.

\begin{theorem}\label{teo1}
Let $X:M^n\looparrowright \mathbb{L}^{n+p}_{q}$ be a complete spacelike LW-submanifold $(a\geq0)$ with parallel normalized mean curvature vector field and flat normal bundle satisfying conditions \eqref{c1}-\eqref{c3}. Suppose that the second fundamental form of $M^n$ is locally timelike and there exists an orthogonal basis for $TM$ that diagonalizes simultaneously all $B_{\varepsilon},
\varepsilon\in TM^{\perp}.$ Assume in addition that $H^2<c$ and
$|\Phi|\leq C(n,p,c,supH)$, we conclude that: 
\begin{enumerate}
    \item[(i)] If $(n-1)a^2+4n(\overline{\mathcal{R}}-b)\geq 0$ then $M^n$ is a totally umbilical submanifold or
    \item[(ii)] If $\sup S$ is attained at some point in $M^n$ and $b<\overline{\mathcal{R}}$, then $M^n$ is an isoparametric submanifold of $\mathbb{L}_{q}^{n+p}.$
\end{enumerate} 
\end{theorem}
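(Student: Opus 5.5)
The argument combines the Omori-type sequence lemma with the lower bound for $\mathcal{L}(nH)$ from the preceding Proposition. The key observation concerns the polynomial $P_{n,p,c,H}$ of \eqref{hip_2_prop}. It is decreasing in $H\geq 0$ for each fixed $x\geq 0$, and its positive root $C(n,p,c,H)$ is decreasing in $H$. Hence $|\Phi|\leq C(n,p,c,\sup H)$ gives, at every point,
\[
P_{n,p,c,H}(|\Phi|)\;\geq\; P_{n,p,c,\sup H}(|\Phi|)\;\geq\; 0 .
\]
Together with \eqref{hip_1_prop}, this yields $\mathcal{L}(nH)\geq |\Phi|^2P_{n,p,c,H}(|\Phi|)\geq 0$ on $M^n$.

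For item (i), I first check the hypotheses of the Omori-type lemma. $H$ is bounded because $H^2<c$. $S=|\Phi|^2+nH^2$ is bounded by the assumed bound on $|\Phi|$, so $S_1$ is bounded as well. The condition $S_1\leq\sum_{i,j,\gamma\neq n+p-q+1}(h^\gamma_{ij})^2$ I would take to be implicit in the locally timelike setting; if it is not, I would add it as a standing assumption. The lemma then provides a sequence $\{q_k\}$ with $nH(q_k)\to\sup nH$ and $\limsup\mathcal{L}(nH)(q_k)\leq 0$. Evaluating the inequality above along $q_k$ gives
\[
\lim_k |\Phi|^2(q_k)\,P_{n,p,c,H}(|\Phi|)(q_k)=0 .
\]
By boundedness we pass to a subsequence with $|\Phi|(q_k)\to \Phi_0$. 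Since $H(q_k)\to\sup H$, either $\Phi_0=0$ or $P_{n,p,c,\sup H}(\Phi_0)=0$, that is, $\Phi_0=C(n,p,c,\sup H)$.

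\textbf{Main obstacle.} The hardest step is converting the case $\Phi_0=0$ into $\Phi\equiv 0$ everywhere. My plan is to use the linear Weingarten relation \eqref{eq:main_relation}: $S$ is an increasing function of $H$ (for $a\geq0$ and $H>0$), so $\sup|\Phi|^2$ should be attained along sequences where $H\to\sup H$. Combined with $\Phi_0=0$, this forces $\sup|\Phi|=0$. If this monotonicity fails because $S_1$ enters \eqref{eq:main_relation} separately, I would instead argue as follows. The equality chain in \eqref{4.7} forces equality in Lemma~\ref{desigualdade_BeH}, and then $H$ is constant. Once $H$ is constant, $\mathcal{L}(nH)=0$, so $|\Phi|^2P=0$ pointwise. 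Then either $\Phi\equiv0$ (totally umbilical) or $|\Phi|\equiv C$.

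For item (ii), $b<\overline{\mathcal{R}}$ makes $\mathcal{L}$ elliptic by Lemma~\ref{L eliptico}. Since $S=n^2H^2+n(n-1)(aH+b-\overline{\mathcal R})+S_1$ and $\mathcal{L}(nH)\geq0$, the function $nH$ is $\mathcal{L}$-subharmonic. Because $\sup S$ is attained, the relation between $S$ and $H$ (monotone for $a\geq0$, $H>0$) means $nH$ attains its maximum at an interior point. Hopf's strong maximum principle then makes $H$ constant. Consequently $\mathcal{L}(nH)=0$, and all inequalities in the Proposition and in \eqref{estimativa_chave} become equalities: $|\nabla B|^2=n^2|\nabla H|^2=0$, and $|\Phi|^2P_{n,p,c,H}(|\Phi|)=0$. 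Thus $B$ is parallel, $S$ and $|\Phi|$ are constant, and the equality case of Lemma~\ref{lemma 3} shows the principal curvatures are constant. Hence $M^n$ is either totally umbilical or isoparametric with $|\Phi|=C(n,p,c,H)$.
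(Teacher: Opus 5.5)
Your route is the paper's route. You apply the Omori-type lemma to $nH$, use the bound $\mathcal{L}(nH)\geq|\Phi|^2P_{n,p,c,H}(|\Phi|)$ from the Proposition, and analyse the roots of $P$. For (ii) you then use Lemma~\ref{L eliptico}, Hopf's principle and equality in \eqref{estimativa_chave}. Your remark that $P_{n,p,c,H}(x)$ decreases in $H\geq0$ for $x\geq0$ justifies the pointwise inequality $\mathcal{L}(nH)\geq0$ more cleanly than the paper does.

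However, the step you flag as the main obstacle is a genuine gap, and neither of your ways around it works in the generality of the statement. From \eqref{eq:main_relation} one gets $S=2S_1+n^2H^2+n(n-1)(aH+b-\overline{\mathcal{R}})$ (note $2S_1$, not $S_1$). Hence
$|\Phi|^2=S-nH^2=2S_1+n(n-1)(H^2+aH+b-\overline{\mathcal{R}})$.
These are increasing functions of $H$ only when $S_1$ is controlled by $H$, for instance $S_1\equiv0$, which is automatic when $q=p$. In that case your argument goes through: $\Phi_0=\sup_M|\Phi|\in\{0,C(n,p,c,\sup H)\}$. In general, however, two things fail. Along the Omori sequence you cannot conclude $|\Phi|(q_k)\to\sup_M|\Phi|$. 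In (ii), attainment of $\sup S$ does not imply that $H$ attains its maximum, so Hopf's principle cannot be applied to $nH$.

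Your fallback is circular. Along $\{q_k\}$ you only learn that $|\nabla B|^2-n^2|\nabla H|^2$ and $|\Phi|^2P_{n,p,c,H}(|\Phi|)$ tend to $0$ at the points $q_k$, not that they vanish on $M^n$. The equality clause of Lemma~\ref{desigualdade_BeH} needs equality on all of $M^n$, so it is unavailable. The phrase ``once $H$ is constant'' presupposes what you are trying to prove.

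The paper's own proof makes exactly the same tacit identification. In \eqref{4.9} it writes $\limsup_k\mathcal{L}(nH)(q_k)\geq\sup_M|\Phi|^2P_{n,p,c,\sup H}(\sup_M|\Phi|)$, which silently uses $|\Phi|(q_k)\to\sup_M|\Phi|$. It also invokes Hopf's principle from the attainment of $\sup S$ without relating it to the maximum of $H$. So your proposal is at the paper's level of rigour and more explicit about where the difficulty lies. To make either argument a proof, you need an extra hypothesis linking $S_1$ to $H$, such as $S_1\equiv0$ or $S_1$ constant. This is on top of the condition $S_1\leq\sum_{i,j,\gamma\neq n+p-q+1}(h^\gamma_{ij})^2$ of the Omori-type lemma, which you correctly noticed is missing from the statement.
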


\begin{proof}
Since we are assuming that $a\geq 0$ and that inequality $(n-1)a^2+4n(\overline{\mathcal{R}}-b)\geq 0$,
we can apply Lemma \ref{desigualdade_BeH} to the function $nH$ in order to obtain a sequence of points $\{q_k\}_{k\in \mathbb{N}}\subset M^n$ such that
\begin{equation}\label{4.8}
\limsup_{k}nH(q_k)=\sup_{M}(nH),\quad\mbox{and}\quad \limsup_{k}\mathcal{L}(nH)(q_k)\leq 0.
\end{equation}
Thus, from \eqref{4.7} and \eqref{4.8} we have 
\begin{equation}\label{4.9}
0\geq \limsup_{k}\mathcal{L}(nH)(q_k)\geq \sup_{M}|\Phi|^2P_{\sup H,n,p,c}\left(\sup_{M}|\Phi|\right).
\end{equation}
On the other hand, our hypothesis imposed on $|\Phi|$ guarantees us that $|\Phi|\equiv 0$ or $\sup_{M}|\Phi|>0.$ If $|\Phi|\equiv 0$ then $M^n$ is umbilical totally. Suppose that $\sup_{M}|\Phi|>0.$ Therefore, from \eqref{4.9} we conclude that 
\begin{equation}\label{4.10}
P_{n,p,c,\sup H}\left(\sup_{M}|\Phi|\right)\leq 0.
\end{equation}
From our restrictions on $H$ and $|\Phi|$, we have that $P_{H,n,p,c}(|\Phi|)\geq  0$, with $P_{H,n,p,c}(|\Phi|)=0$ if, and only if, $|\Phi|=C(n,p,c,H).$ Consequently, from \eqref{4.10}
\begin{equation}\label{hipparaparabolicidade}
\sup_{M}|\Phi|=C(n,p,c,supH).
\end{equation}
In this case, from \eqref{4.7} and taking into account once more the behavior of the
function $|\Phi|$, we get that $\mathcal{L}(nH)\geq 0.$ But, since we are assuming that
$b<\overline{\mathcal{R}}$, item (i) of Lemma ~\ref{L eliptico} guarantees that $\mathcal{L}$ is elliptic. Consequently, since we are also supposing that the supremum of $S$ on $M^n$ is attained at some point of $M^n$, by Hopf’s Maximum Principle we conclude that $|\Phi|$ is constant on $M^n$, consequently the same holds for $H$ and thus all the inequalities previously obtained become equalities. Hence, from \eqref{estimativa_chave} we obtain
\begin{eqnarray*}
\sum_{i,j,k,\alpha}(h_{ijk}^{\alpha})^2=|\nabla B|^2=n^2|\nabla H|^2=0,
\end{eqnarray*}
that is, $h_{ijk}^{\alpha}=0$ for all $i,j\in \{1,2,\cdots,n\}.$ 
Therefore, we conclude that $M^n$ is an
isoparametric submanifold of $\mathbb{L}_{q}^{n+p}.$

\end{proof}

\begin{remark}
Observe that if $a = 0$ and the scalar curvature $R$ is constant, then the condition
$
(n-1)a^2 + 4n(\overline{\mathcal{R}} - b) \geq 0
$
is automatically satisfied.
\end{remark}

In this setting, we may state the following corollary

\begin{corollary}

Let $X:M^n\looparrowright \mathbb{L}^{n+p}_{q}$ be a complete spacelike LW-submanifold with parallel normalized mean curvature vector field and flat normal bundle satisfying conditions \eqref{c1}-\eqref{c3} and constant scalar curvature $R\leq \overline{\mathcal{R}}$. Suppose that the second fundamental form of $M^n$ is locally timelike and there exists an orthogonal basis for $TM$ that diagonalizes simultaneously all $B_{\varepsilon}, \varepsilon\in TM^{\perp}.$ Assume in addition that $H^2<c$ and $|\Phi|\leq C(n,p,c,supH)$, then either
\begin{enumerate}
    \item[(i)] $M^n$ is a totally umbilical submanifold or
    \item[(ii)] If $\sup S$ is attained at some point in $M^n$ and $b<\overline{\mathcal{R}}$, then $M^n$ is an isoparametric submanifold of $\mathbb{L}_{q}^{n+p}.$
\end{enumerate} 

\end{corollary}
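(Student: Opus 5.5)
The plan is to derive the corollary directly from Theorem~\ref{teo1} by specializing to the linear Weingarten relation with $a=0$. If the scalar curvature $R$ is constant, then $R=aH+b$ holds with $a=0$ and $b=R$. The standing hypothesis $a\geq 0$ of Theorem~\ref{teo1} is then trivially satisfied.

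The main step is to check the structural inequality \eqref{H1}. With $a=0$ it reads $4n(\overline{\mathcal{R}}-b)\geq 0$, that is, $R=b\leq\overline{\mathcal{R}}$. This is precisely the assumption $R\leq\overline{\mathcal{R}}$ in the corollary, and it is the content of the preceding remark. Hence hypothesis (i) of Theorem~\ref{teo1} holds.

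All the remaining hypotheses of Theorem~\ref{teo1} are assumed verbatim in the corollary:
\begin{itemize}
\item completeness, parallel normalized mean curvature and flat normal bundle;
\item conditions \eqref{c1}--\eqref{c3};
\item the second fundamental form is locally timelike;
\item simultaneous diagonalizability of all $B_\varepsilon$;
\item $H^2<c$ and $|\Phi|\leq C(n,p,c,\sup H)$.
\end{itemize}
Applying item (i) of Theorem~\ref{teo1} shows that either $|\Phi|\equiv 0$, so $M^n$ is totally umbilical, or $\sup_M|\Phi|=C(n,p,c,\sup H)$. In the latter case we invoke item (ii). Under the extra assumptions that $\sup S$ is attained and $b<\overline{\mathcal{R}}$ (strict, i.e.\ $R<\overline{\mathcal{R}}$), Lemma~\ref{L eliptico} gives ellipticity of $\mathcal{L}$, which here reduces to $\square$ since $a=0$. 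Hopf's maximum principle then yields $|\nabla B|=0$, so $M^n$ is isoparametric.

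The only point needing care is that the corollary's dichotomy matches the theorem's: the strict inequality $b<\overline{\mathcal{R}}$ is required only for ellipticity in case (ii), while the non-strict $R\leq\overline{\mathcal{R}}$ suffices for the Omori--Yau part in case (i). I expect no genuine obstacle beyond this bookkeeping. In particular, the Omori-type lemma preceding Theorem~\ref{teo1} still applies with $a=0$, because its requirement $a\geq0$ is met.
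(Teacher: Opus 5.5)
Your proposal is correct and follows the paper's route: the corollary is Theorem~\ref{teo1} specialized to $a=0$, $b=R$, where \eqref{H1} reduces to $4n(\overline{\mathcal{R}}-R)\geq 0$, i.e.\ exactly the added hypothesis $R\leq\overline{\mathcal{R}}$ (this is what the paper's remark calling it ``automatically satisfied'' tacitly uses), and item (i) of Lemma~\ref{L eliptico} is the $a=0$ case used for ellipticity. Only your closing aside overreaches: the Omori-type lemma also requires $S_1$ bounded and $S_1\leq\sum_{i,j,\gamma\neq n+p-q+1}(h_{ij}^{\gamma})^2$, which neither Theorem~\ref{teo1} nor the corollary assumes, so that step is inherited from the theorem's proof rather than justified by $a\geq 0$ alone.
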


\subsection{Via $\mathcal{L}$-parabolicity}

We recall that a Riemannian manifold $M^n$ is said to be \emph{parabolic} (with respect to the Laplacian operator) if the constant functions are the only subharmonic functions on $M^n$ which are bounded from above; that is for a function $u \in C^2(M)$
$$
\Delta u \geq 0 \quad \text{and} \quad u \leq u^* \ll \infty \quad \text{implies} \quad u = \text{constant}.
$$
From a physical viewpoint, parabolicity is closely related to the recurrence of the Brownian motion. Roughly speaking, the parabolicity is equivalent to the property that all particles will pass through any open set at an arbitrarily large time, for more details see \cite{grigorian:99}.

Extending this previous concept for the operator $\mathcal{L}$ defined in (2.10), $M^n$ is said to be \emph{$\mathcal{L}$-parabolic} if the constant functions are the only functions $u \in C^2(M)$ which are bounded from above and satisfies $\mathcal{L}u \geq 0$, that is, for a function $u \in C^2(M)$,
$$
\mathcal{L}u \geq 0 \quad \text{and} \quad u \leq u^* \ll \infty \quad \text{implies} \quad u = \text{constant}.
$$

In this setting, we obtain the following gap result.

\begin{theorem}
Let $X:M^n\looparrowright \mathbb{L}^{n+p}_{q}$ be a complete spacelike LW-submanifold $(a\geq0)$ with parallel normalized mean curvature vector field and flat normal bundle satisfying conditions \eqref{c1}-\eqref{c3}. Suppose that the second fundamental form of $M^n$ is locally timelike and there exists an orthogonal basis for $TM$ that diagonalizes simultaneously all $B_{\varepsilon},
\varepsilon\in TM^{\perp}.$ Assume in addition that $M^n$ is $\mathcal{L}-$parabolic, $H^2<c$ and $0\leq |\Phi|\leq C(n,p,c,supH)$, then either $|\Phi|\equiv 0$ and $M^n$ is a totally umbilical, or $|\Phi| =  C(n,p,c,supH)$ and $M^n$ is an isoparametric submanifold.
\end{theorem}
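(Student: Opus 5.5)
The plan is to feed the lower bound of the Proposition into the definition of $\mathcal{L}$-parabolicity, with $u=nH$. First, the hypothesis $(n-1)a^2+4n(\overline{\mathcal{R}}-b)\geq 0$ is needed to invoke the Proposition and Lemma \ref{desigualdade_BeH}, and I would assume it is implicit in the statement, as in Theorem \ref{teo1}. Under it, \eqref{estimativa_chave} gives
\[
\mathcal{L}(nH)\geq |\nabla B|^2-n^2|\nabla H|^2+|\Phi|^2P_{n,p,c,H}(|\Phi|)\geq |\Phi|^2P_{n,p,c,H}(|\Phi|).
\]
Since $H^2<c$, the Remark following the Proposition says $P_{n,p,c,H}$ is a concave quadratic. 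Its vertex lies at $x_v\leq 0$, it is positive at $x=0$, and its unique positive root is $C(n,p,c,H)$. Hence $P_{n,p,c,H}(x)\geq 0$ for $0\leq x\leq C(n,p,c,H)$. Since $C(n,p,c,H)$ is decreasing in $H$, we have $C(n,p,c,\sup H)\leq C(n,p,c,H)$ pointwise. The hypothesis $|\Phi|\leq C(n,p,c,\sup H)$ therefore gives $P_{n,p,c,H}(|\Phi|)\geq 0$ at every point, and so $\mathcal{L}(nH)\geq 0$ on $M^n$.

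Next I would check that $nH$ is bounded above. From $H^2<c$ we get $nH<n\sqrt{c}$, so $u=nH$ is a bounded-above $C^2$ function with $\mathcal{L}u\geq0$. By $\mathcal{L}$-parabolicity, $H$ is constant on $M^n$. Then $\mathcal{L}(nH)=0$, and because each term on the right of the displayed inequality is nonnegative, all of them vanish:
\[
|\nabla B|^2=n^2|\nabla H|^2=0\quad\text{and}\quad |\Phi|^2P_{n,p,c,H}(|\Phi|)=0 \text{ on } M^n.
\]

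The dichotomy follows from the second identity, since at each point either $|\Phi|=0$ or $|\Phi|=C(n,p,c,H)=C(n,p,c,\sup H)$, using that $H$ is constant. The relation \eqref{eq:main_relation} with $H$ constant forces $S_2-S_1$ to be constant. A more direct route is to use $|\Phi|^2=S-nH^2$ together with $\nabla B=0$, so that $S$ is constant. Either way $|\Phi|$ is constant, which means one of the two alternatives holds globally on the connected $M^n$. If $|\Phi|\equiv0$, then $M^n$ is totally umbilical. Otherwise $|\Phi|\equiv C(n,p,c,\sup H)$, and $h^\alpha_{ijk}=0$ says $B$ is parallel, so the principal curvatures are constant and $M^n$ is isoparametric, as in the end of the proof of Theorem \ref{teo1}.

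The main obstacle I expect is justifying the pointwise sign $P_{n,p,c,H}(|\Phi|)\geq 0$ from a bound stated with $\sup H$. This rests on the monotonicity of $C(n,p,c,\cdot)$ in $H$ together with $H>0$, which I would verify by differentiating the explicit formula, or more simply by noting that $P_{n,p,c,H}(x)$ is decreasing in $H$ for $x\geq0$. I would also make sure that the use of Lemma \ref{desigualdade_BeH}, and hence the hypothesis on $a,b$, is properly accounted for.
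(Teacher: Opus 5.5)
Your proof is correct, and it is more direct than the paper's. The paper argues by contradiction: it assumes $M^n$ is not totally umbilical and imports \eqref{hipparaparabolicidade}, $\sup_M|\Phi|=C(n,p,c,\sup H)$, from the Omori--Yau step in the proof of Theorem~\ref{teo1}. Only then does it combine \eqref{4.9} with $\mathcal{L}$-parabolicity to get $|\Phi|\equiv C(n,p,c,\sup H)$, and it finishes as in Theorem~\ref{teo1}.

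You bypass the Omori--Yau step altogether.
\begin{itemize}
\item The monotonicity of $P_{n,p,c,H}(x)$ in $H$ (for $x\geq 0$, $H>0$) turns the bound $|\Phi|\leq C(n,p,c,\sup H)$ into pointwise nonnegativity of $P_{n,p,c,H}(|\Phi|)$.
\item Then \eqref{estimativa_chave}, together with Lemma~\ref{desigualdade_BeH}, gives $\mathcal{L}(nH)\geq 0$ everywhere.
\item The bound $nH<n\sqrt{c}$ makes $u=nH$ an admissible test function, so parabolicity makes $H$ constant.
\item The equality case of \eqref{estimativa_chave} then yields both $\nabla B=0$ and $|\Phi|^2P_{n,p,c,H}(|\Phi|)=0$, which gives the dichotomy directly, with no contradiction argument.
\end{itemize}

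What your route buys is that it needs none of the hypotheses of the Omori--Yau sequence lemma. That lemma requires $S_1$ bounded and $S_1\leq\sum_{i,j,\gamma\neq n+p-q+1}(h^{\gamma}_{ij})^2$, which are not in the statement and enter the paper's proof only silently through Theorem~\ref{teo1}. It also requires $a\geq 0$, which your argument never uses.

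Your route also makes explicit two things the paper leaves tacit: which function the parabolicity is applied to, and the passage from the bound with $\sup H$ to a pointwise sign. The paper's route has only the advantage of reusing machinery already set up for Theorem~\ref{teo1}.

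As you correctly flag, both arguments rely on $(n-1)a^2+4n(\overline{\mathcal{R}}-b)\geq 0$, through the Proposition and Lemma~\ref{desigualdade_BeH}, and the statement omits this hypothesis.
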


\begin{proof} Suppose by contradiction that $M^n$ is not totally umbilical, Since we are assuming that $0 \leq |\Phi|\leq C(n,p,c,supH)$, we obtain
$$
0< \sup_{M}|\Phi| \leq C(n,p,c,supH),
$$
in the sense, from \eqref{hipparaparabolicidade} of Theorem \ref{teo1} we get that
\begin{equation}
\sup_M |\Phi| = C(n,p,c,supH).
\end{equation}
Furthermore, from \eqref{4.9} jointly with the $\mathcal{L}$-parabolicity of $M^n$ we conclude that $|\Phi|$ must be constant and identically equal to $C(n,p,c,supH)$. Therefore, at this point we can proceed as in the last part of the proof of Theorem \ref{teo1} to conclude the result.
  
\end{proof}

\begin{remark}
When the ambient space $\mathbb{L}^{n+p}_q$ is supposed to be Einstein, reasoning as in the first part of the proof of \cite[Theorem 1.1]{Lima: 13}, it is not difficult to verify that
\begin{equation}\label{newL}
\mathcal{L}(f) = \mathrm{div}(\mathcal{P}(\nabla f)),
\end{equation}
where $P$ is just the operator defined by
\begin{equation}\label{newP}
\mathcal{P} = \left( nH + \dfrac{n-1}{2}a \right) I - h^{n+1}.
\end{equation}
\end{remark}

\subsection{Via integrability property}

In \cite{yau_new}, S. T. Yau established the following version of Stokes' theorem on an $n$-dimensional complete noncompact Riemannian manifold $M^n$: If $\omega \in \Omega^{n-1}(M)$ is an $(n-1)$-differential form on $M^n$, then there exists a sequence $\{B_i\}$ of domains on $M^n$ such that
$$
B_i \subset B_{i+1}, \quad M^n = \bigcup_{i \geq 1} B_i \quad \text{and} \quad \lim_{i \to \infty} \int_{B_i} d\omega = 0.
$$

Supposing that $M^n$ is oriented by the volume element $dM$ and considering the contraction of $dM$ in the direction of a smooth vector field $X$ on $M^n$, that is,
$$
d\omega = \iota_X dM,
$$
A. Caminha in \cite{caminha:11} obtained a suitable consequence of Yau's result, which is described below (specifically, see \cite[Proposition 2.1]{caminha:11}). In what follows, $L^1(M)$ stands for the space of Lebesgue integrable functions on $M^n$.

\begin{lemma}\label{lemmaL1}
Let $X$ be a smooth vector field on the $n$-dimensional complete oriented Riemannian manifold $M^n$, such that $\mathrm{div}\, X$ does not change sign on $M^n$. If $|X| \in L^1(M)$, then $\mathrm{div}\, X = 0$.
\end{lemma}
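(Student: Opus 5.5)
The statement is Caminha's Proposition 2.1 in \cite{caminha:11}. The plan is to deduce it from Yau's Stokes-type theorem recalled above, applied to the $(n-1)$-form $\omega=\iota_X dM$.

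\textbf{Step 1: set up the form.} By Cartan's formula, $d\omega = d(\iota_X dM) = \mathcal{L}_X dM = (\mathrm{div}\,X)\,dM$. The pointwise norm of $\omega$ is controlled by that of $X$: $|\omega|=|X|$ up to a dimensional constant. The hypothesis $|X|\in L^1(M)$ therefore gives $|\omega|\in L^1(M)$. This integrability is the hypothesis under which Yau's theorem yields an exhaustion $B_i\subset B_{i+1}$, $\bigcup_i B_i=M^n$, with
\[
\lim_{i\to\infty}\int_{B_i}(\mathrm{div}\,X)\,dM=0 .
\]

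\textbf{Step 2: use the constant sign.} Assume, without loss of generality, that $\mathrm{div}\,X\geq 0$; otherwise replace $X$ by $-X$. Then the integrals $\int_{B_i}\mathrm{div}\,X\,dM$ are nondecreasing in $i$. By monotone convergence they converge to $\int_M \mathrm{div}\,X\,dM$. Step 1 forces this limit to be $0$. A nonnegative continuous function with zero integral vanishes identically, so $\mathrm{div}\,X\equiv 0$.

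\textbf{Main obstacle.} The delicate point is Yau's result itself, namely producing the exhaustion along which the boundary flux tends to zero. The argument I would give is the following. Write $V(r)$ for the volume of the geodesic ball $B(r)$ and use the coarea formula:
\[
\int_0^\infty\!\!\int_{\partial B(r)}|X|\,d\sigma\,dr \;\le\; \int_M |X|\,dM<\infty .
\]
Hence $\liminf_{r\to\infty}\int_{\partial B(r)}|X|\,d\sigma=0$, since otherwise the left side would diverge. Choose radii $r_i\to\infty$ realizing this liminf. Divergence on $B(r_i)$ then gives
\[
\Bigl|\int_{B(r_i)}\mathrm{div}\,X\,dM\Bigr| \le \int_{\partial B(r_i)}|X|\,d\sigma\to 0 .
\]
Geodesic spheres need not be smooth, so I would apply this for almost every $r$, where Sard's theorem applied to a smoothing of the distance function gives regular level sets. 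Equivalently, one can use cutoff functions $\varphi_r$ with $|\nabla\varphi_r|\le 1$ supported in $B(2r)$, integrating $\varphi_r\,\mathrm{div}\,X = \mathrm{div}(\varphi_r X)-\langle\nabla\varphi_r,X\rangle$. Completeness is used exactly here, to guarantee that the balls exhaust $M^n$ and have compact closure. Since the paper's preceding paragraph explicitly allows assuming Yau's theorem as stated, only Steps 1 and 2 need to be written out in full.
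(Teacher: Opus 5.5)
Your proposal is correct and follows the same route as the paper, which proves nothing itself but cites Caminha's derivation: apply Yau's Stokes theorem to $\omega=\iota_X\,dM$, for which $|\omega|=|X|$ exactly and $d\omega=(\mathrm{div}\,X)\,dM$, and then use monotone convergence together with the sign condition. You also rightly supply the hypothesis $|\omega|\in L^1(M)$, which the paper's paraphrase of Yau's theorem omits, and your cutoff sketch of Yau's result is sound, since the error term is bounded by $\int_{M\setminus B(r)}|X|\,dM\to 0$.
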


We close our paper applying Lemma 2 in order to obtain the following characterization result.

\begin{theorem}
Let $X:M^n\looparrowright \mathbb{L}^{n+p}_{q}$ be a complete spacelike LW-submanifold $(a\geq0)$ with parallel normalized mean curvature vector field and flat normal bundle satisfying conditions \eqref{c1}-\eqref{c3}. Suppose that the second fundamental form of $M^n$ is locally timelike and there exists an orthogonal basis for $TM$ that diagonalizes simultaneously all $B_{\varepsilon},
\varepsilon\in TM^{\perp}.$ If $|\nabla H| \in L^1(M)$, $H^2<c$ and 
$|\Phi|\leq C(n,p,c,supH)$, then either $M^n$ is a totally umbilical, or $|\Phi| =  C(n,p,c,supH)$ and $M^n$ is an isoparametric submanifold.
\end{theorem}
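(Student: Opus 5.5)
The plan is to write $\mathcal{L}(nH)$ as a divergence and apply Lemma~\ref{lemmaL1} to the vector field $X=\mathcal{P}(\nabla (nH))$, where $\mathcal{P}=\left(nH+\frac{n-1}{2}a\right)I-h^{n+p-q+1}$ as in \eqref{newP}. Two things are needed first. One is the divergence identity $\mathcal{L}(f)=\mathrm{div}(\mathcal{P}(\nabla f))$. In the Einstein case it is \eqref{newL}. In general, $\mathrm{div}(\mathcal{P}\nabla f)=\mathcal{L}f+\langle \mathrm{div}\,\mathcal{P},\nabla f\rangle$. By Codazzi \eqref{EQUACAO DE CODAZZI}, the parallelism of $e_{n+p-q+1}$ and the condition \eqref{c1} (which kills $\overline{R}_{\alpha kik}$ tangential-normal contributions), one gets $\mathrm{div}\,\mathcal{P}=\frac{n-1}{2}\nabla a=0$ up to ambient terms that vanish under \eqref{c1}--\eqref{c3}. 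I will check this carefully, since it is what replaces the Einstein assumption. The other is that $X$ is integrable. Since $|\Phi|\le C(n,p,c,\sup H)$ and $H^2<c$, both $H$ and $S=|\Phi|^2+nH^2$ are bounded. Hence $|h^{n+p-q+1}|$ is bounded, so the operator norm of $\mathcal{P}$ is bounded by a constant $K$. Then $|X|\le nK|\nabla H|\in L^1(M)$ by hypothesis.

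Next comes the sign of $\mathrm{div}\,X=\mathcal{L}(nH)$. By the Proposition (inequality \eqref{4.7}), $\mathcal{L}(nH)\ge |\Phi|^2P_{n,p,c,H}(|\Phi|)$. Since $P_{n,p,c,H}$ is a concave quadratic with vertex $x_v\le 0$, positive value $n(c-H^2)$ at $0$, and positive root $C(n,p,c,H)\ge C(n,p,c,\sup H)$ (the root decreases in $H$), the hypothesis $|\Phi|\le C(n,p,c,\sup H)$ gives $P_{n,p,c,H}(|\Phi|)\ge 0$. Hence $\mathcal{L}(nH)\ge 0$ does not change sign. Lemma~\ref{lemmaL1} then yields $\mathcal{L}(nH)\equiv 0$.

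Finally I extract rigidity. With $\mathcal{L}(nH)=0$, every inequality in the chain \eqref{estimativa_chave} and \eqref{4.7} must be an equality. In particular $|\Phi|^2P_{n,p,c,H}(|\Phi|)=0$ and $|\nabla B|^2=n^2|\nabla H|^2$. The equality case of Lemma~\ref{desigualdade_BeH} (the hypothesis \eqref{H1} is used there as in the Proposition) gives $H$ constant. Then $|\nabla B|=0$, so $h^\alpha_{ijk}=0$. At each point either $|\Phi|=0$ or $|\Phi|=C(n,p,c,H)$. Since $H$ is constant, $C(n,p,c,H)=C(n,p,c,\sup H)$. Continuity and connectedness of $M^n$, with $|\Phi|$ taking only two values, force either $|\Phi|\equiv0$ (totally umbilical) or $|\Phi|\equiv C(n,p,c,\sup H)$. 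In the latter case the second fundamental form is parallel, so $M^n$ is isoparametric.

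The main obstacle I expect is the divergence form of $\mathcal{L}$ without the Einstein hypothesis, that is, showing that $\mathrm{div}\,\mathcal{P}$ vanishes. I would first try computing $\sum_j(nH\delta_{ij}-h_{ij}^{n+p-q+1})_j=nH_i-\sum_j h^{n+p-q+1}_{jij}-\sum_j\overline{R}_{(n+p-q+1)jij}$. Codazzi reduces this to the ambient term. I would then argue that this term vanishes by \eqref{c1}, because the sectional curvature of mixed tangent-normal planes is constant, so the corresponding Ricci-type contraction is pure trace and has no tangent component. If that fails, I would add the Einstein assumption as in the preceding Remark.
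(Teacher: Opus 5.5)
Your route is the same as the paper's. You take $X=\mathcal{P}(\nabla(nH))$, bound $\mathcal{P}$, get $\mathcal{L}(nH)\ge 0$ from \eqref{estimativa_chave}, apply Lemma~\ref{lemmaL1}, and then use the equality case of Lemma~\ref{desigualdade_BeH}. Several of your steps are tidier than the paper's:
\begin{itemize}
\item You bound $S$ through $S=|\Phi|^2+nH^2$. The paper appeals to \eqref{equacaoRS}, which only controls $S_2-S_1$.
\item You justify $C(n,p,c,H)\ge C(n,p,c,\sup H)$ by monotonicity in $H$.
\item You derive the final dichotomy $|\Phi|\equiv 0$ or $|\Phi|\equiv C$.
\end{itemize}

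The genuine gap is the divergence identity, and your proposed way of obtaining it fails. Write $\nu=n+p-q+1$. After Codazzi you correctly reduce $(\mathrm{div}\,\mathcal{P})_i$ to $-\sum_j\overline{R}_{\nu jij}$, up to sign convention. This is a component of $\overline{R}$ with three tangent slots and one normal slot. Condition \eqref{c1} only prescribes $\overline{R}(x,\eta,x,\eta)$ for $x$ tangent and $\eta$ normal. Even after polarization, it constrains only components with exactly two normal slots. Likewise \eqref{c2}, \eqref{R barra} and \eqref{c3} only involve components with an even number of normal slots. The first Bianchi identity permutes slots, so it never links even components with odd ones. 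Hence nothing in the hypotheses forces $\sum_j\overline{R}_{\nu jij}=0$. Without it, $\mathrm{div}(\mathcal{P}\nabla(nH))=\mathcal{L}(nH)+n\langle\mathrm{div}\,\mathcal{P},\nabla H\rangle$ has no controlled sign. So Lemma~\ref{lemmaL1} cannot be applied, and $\mathcal{L}(nH)=0$ does not follow.

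The paper does not close this gap either. Its proof simply quotes \eqref{newL}, which is asserted only in the preceding Remark and only for an Einstein ambient space. That hypothesis is absent from the theorem. Your fallback is therefore exactly what the paper's argument tacitly rests on, and it amounts to strengthening the statement.

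Even that fallback is incomplete for $p\ge 2$. There the Einstein condition $\overline{R}_{i\nu}=0$ only yields $\sum_j\overline{R}_{\nu jij}=-\sum_\alpha\epsilon_\alpha\overline{R}_{i\alpha\nu\alpha}$. The right-hand side has three normal slots and is again uncontrolled. The clean fix is to assume directly that $\sum_j\overline{R}_{\nu jij}=0$, which holds for instance when $\mathbb{L}^{n+p}_q$ has constant sectional curvature.

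You also need \eqref{H1}, as the paper does tacitly, even though it is not among the stated hypotheses. It enters the equality step, but also already the sign step, since \eqref{4.7} uses $|\nabla B|^2\ge n^2|\nabla H|^2$ from Lemma~\ref{desigualdade_BeH}.
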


\begin{proof}

Since $R = aH + b$ and the mean curvature function $H$ is bounded on $M^n$, from \eqref{equacaoRS}, it follows that the second fundamental form $B$ is also bounded on $M^n$. Consequently, by \eqref{newP}, the operator $\mathcal{P}$ is bounded, that is, there exists a positive constant $\beta > 0$ such that
$$
|\mathcal{P}| \leq \beta \quad \text{on } M^n.
$$
Moreover, since we assume that $|\nabla H| \in L^1(M)$, it follows from \eqref{newP} that
\begin{equation}
|\mathcal{P}(\nabla H)| \leq |\mathcal{P}|\,|\nabla H|
\leq \beta |\nabla H| \in L^1(M).
\end{equation}
Therefore, taking into account \eqref{newL} and \eqref{newP}, and applying Lemma~\ref{lemmaL1}, we obtain
\begin{equation}\label{div0}
\mathcal{L}(nH) = \mathrm{div}(\mathcal{P}(nH)) = 0 \quad \text{on } M^n.
\end{equation}
On the other hand, using the estimate
$$
0 \leq |\Phi| \leq C(n,p,c,\sup H),
$$
together with \eqref{estimativa_chave} and \eqref{div0}, we infer that
\begin{equation}
0 = \mathcal{L}(nH)
\geq |\nabla B|^2 - n^2 |\nabla H|^2
+ |\Phi|^2 P_{n,p,c,H}(|\Phi|)
\geq 0.
\end{equation}
Hence, all inequalities above must be equalities. In particular, we conclude that
$
|\nabla B|^2 = n^2 |\nabla H|^2.$
Consequently, Lemma~\ref{desigualdade_BeH} implies that the mean curvature $H$ is constant on $M^n$. As a consequence, we obtain
$$
\sum_{i,j,k,\alpha} (h_{ijk}^{\alpha})^2
= |\nabla B|^2
= n^2 |\nabla H|^2
= 0,
$$
which shows that $h_{ijk}^{\alpha} = 0$ for all indices. Therefore, $M^n$ is an isoparametric submanifold. The proof is completed by the same argument used in the final part of the proof of Theorem~\ref{teo1}.

\end{proof}

\section*{Acknowledgements}
The second author acknowledges the financial support of the Brazilian National Council for Scientific and Technological Development (CNPq) through the Universal Project, grant number 406078/2025-4.

\bibliographystyle{amsplain}

\end{document}